\newtheorem{theorem}{Theorem}[section]
\newtheorem{lemma}[theorem]{Lemma}
\newtheorem{proposition}[theorem]{Proposition}
\newtheorem{corollary}[theorem]{Corollary}
\theoremstyle{remark}
\newtheorem{remark}[theorem]{Remark}
\def\Xint#1{\mathchoice
{\XXint\displaystyle\textstyle{#1}}%
{\XXint\textstyle\scriptstyle{#1}}%
{\XXint\scriptstyle\scriptscriptstyle{#1}}%
{\XXint\scriptscriptstyle\scriptscriptstyle{#1}}%
\!\int}
\def\XXint#1#2#3{{\setbox0=\hbox{$#1{#2#3}{\int}$ }
\vcenter{\hbox{$#2#3$ }}\kern-.6\wd0}}
\def\dashint{\Xint-}
\newcommand{\E}{{\mathbb E}}
\newcommand{\N}{{\mathbb N}}
\newcommand{\R}{{\mathbb R}}
\newcommand{\mO}{\mathcal{O}}
\newcommand{\mI}{\mathcal{I}}
\newcommand{\mM}{\mathcal{M}}
\newcommand{\mN}{\mathcal{N}}
\newcommand{\pa}{{\partial}}
\newcommand{\na}{{\nabla}}
\newcommand{\eps}{{\varepsilon}}
\def\div{\hbox{\rm div \!}}
\def\sspace{\smallskip \noindent}
\def\mspace{\medskip \noindent}
\def\bspace{\bigskip\noindent}
\title{Derivation of the Batchelor-Green formula \\ for random suspensions} 
\author{David G\'erard-Varet}
\begin{document}
\maketitle

\begin{resume} Cet article est consacré à la détermination de la viscosité effective d'une suspension sans inertie, à faible fraction volumique solide $\phi$. Le but est d'en obtenir rigoureusement une approximation explicite à l'ordre $\phi^2$. Dans les articles \cite{GVH, GVM}, une telle approximation a été obtenue dans le cas de sphères solides satisfaisant une hypothèse forte de séparation $ d_{min} \ge c \phi^{-\frac13} r$, avec $d_{min}$ la distance minimale entre les sphères et $r$ leur rayon. Des formules très explicites ont été fournies pour le cas de distributions périodiques ou aléatoires stationnaires (avec séparation forte) de particules. Nous considérons ici une autre classe de distributions aléatoires (sans doute plus réaliste), sous des hypothèses de séparation beaucoup moins fortes et des hypothèses de décorrélation à l'infini. Nous justifions en particulier dans ce contexte la célèbre formule de Batchelor-Green \cite{BG}. Notre résultat s'applique par exemple au cas où les boules sont distribuées selon un processus de Poisson {\em hardcore}, satisfaisant l'hypothèse presque minimale $d_{min} > (2+\eps) r$, $\eps > 0$.  

\medskip
{\em Mots clés : viscosité effective, homogénéisation, processus ponctuels, équations de Stokes. }
\end{resume}

\english

\abstract{This paper is dedicated to the effective viscosity of suspensions without inertia, at low solid volume fraction $\phi$. The goal is to derive rigorously a $o(\phi^2)$ formula for the effective viscosity.  In \cite{GVH, GVM}, such formula was given for rigid spheres  satisfying the strong separation assumption $ d_{min} \ge c \phi^{-\frac13} r$, where $d_{min}$ is the minimal distance between the spheres and $r$ their radius. It was then applied to both periodic and random configurations with separation, to yield explicit values for the $O(\phi^2)$ coefficient. We consider here complementary (and certainly more realistic)  random configurations, satisfying softer assumptions of separation, and long range decorrelation. We justify in this setting the famous Batchelor-Green formula  \cite{BG}. Our result applies for instance to hardcore Poisson point process with almost minimal hardcore assumption $d_{min} > (2+\eps) r$, $\eps > 0$. 

\medskip
{\em Keywords : effective viscosity, homogenization, point processes, Stokes equations.}}

\section{Introduction}
The most basic model to study the effective viscosity created by a suspension of rigid balls in a fluid is the following. 
Given the family  $(B_i)_{i \in I}$ of the balls,  indexed by a finite subset $I$ of $\N$,  and given  $S \in \text{Sym}_{3,\sigma}(\mathbb{R})$, the space of 3x3 symmetric and trace-free matrices, we consider the system
\begin{equation} \label{stokes}
\begin{aligned}
-\Delta u_{I,S}  + \na p_{I,S}  & = 0,  \quad  x \in \R^3 \setminus (\cup_{i \in I} B_i), \\
\div u_{I,S}   & = 0,  \quad  x \in \R^3  \setminus (\cup_{i \in I} B_i), \\
D u_{I,S} & = 0,  \quad  x \in \cup_{i \in I} B_i, \\
\int_{\pa B_i} \sigma(u_{I,S}, p_{I,S}) n   &  = 0,  \quad  \forall i \in I, \\
\int_{\pa B_i} \sigma(u_{I,S}, p_{I,S}) n \times (x-x_i) & = 0,   \quad  \forall i \in I, \\
\lim_{ |x| \rightarrow +\infty} u_{I,S} - S x & = 0. 
\end{aligned}
\end{equation}
Here, $D = \frac{1}{2} (\na + \na^t)$ denotes the symmetric gradient, $ \sigma(u,p) := 2 D(u) - pI$ denotes the newtonian stress tensor, and $x_i$ is the center of $B_i$. The first two lines correspond to a steady Stokes flow outside the balls, meaning that we neglect the inertia of the fluid. The third line expresses that the velocity field in each $B_i$ is rigid:  it is equivalent to $u = u_i + \omega_i \times (x-x_i)$ on $B_i$ for some constant vectors $u_i, \omega_i \in \R^3$. The fourth, resp.  fifth line, expresses that the force, resp. the torque on each particle is zero. In particular, we neglect  the gravitational force ({\em neutral buoyancy}). Finally, the last condition models the application of a strain on the fluid. As soon as closed balls $B_i$'s are disjoint, now standard arguments yield existence and uniqueness of a solution 
$u_{I,S} \in \dot{H}^1(\R^3) := \{ u \in L^6(\R^3), \quad \na u \in L^2(\R^3)\}$.

\mspace
It is well-known that the rigidity of the balls creates resistance to strain. If the number of balls is large, one may expect some averaging to take place, so that this extra resistance  may be interpreted as an extra viscosity for the fluid.   The hope is to replace the fluid-particles system by a simpler Stokes equation in the whole of $\R^3$, with an {\em effective viscosity tensor} in the region where the particles stand.

\mspace
A path towards the derivation of  an effective model is homogenization theory.  Typically, one considers balls $B_{i, \eps} = B(x_{i, \eps}, \eps)$, where $\eps \ll 1$ is the radius of the balls, and where the centers $x_{i, \eps}$ are built from a periodic or random stationary distribution of points. In the context of suspensions, the random modeling is of course far more relevant. The balls occupy a macroscopic domain $\mO$  of typical size $O(1)$, and one considers the asymptotics of $v_\eps$ solution of \eqref{stokes} with $B_i = B_{i, \eps}$, 
as $\eps \rightarrow 0$. The limit is the solution of a Stokes equation over $\R^3$ with non-constant viscosity tensor $\mu_{\mathrm{eff}}$: one expects $\mu_{\mathrm{eff}} = 1$ outside $\mO$, $\mu_{\mathrm{eff}} = \mu_h$ a homogenized viscosity tensor inside $\mO$. While such result has been known for long in the context of the Laplace equation \cite[chapter 3]{book:ZKO}, to our knowledge, the Stokes case was only analyzed  in the recent paper \cite{DuerinckxGloria}. See \cite{BelKoz,BasGer,GerVa,DalGer,GiuHo,DueGlo} for other stochastic homogenization results in fluid mechanics. In \cite{DuerinckxGloria}, the authors consider a stationary and ergodic point process $(x_i)_{i \in \N}$ over $\R^3$, and  $B_{i,\eps} := \eps B_i$, where $B_i := \overline{B(x_i, 1)}$ is the closed unit ball centered at $x_{i}$.  The point process satisfies the assumption 
\begin{equation} \label{H1} \tag{H1}
\exists R_0 > 2, \quad  \inf_{i \neq j} |x_i - x_j| \ge R_0\quad \text{almost surely}. 
\end{equation}
 which is a slight reinforcement of the natural non-penetration condition of the particles. As shown in \cite{DuerinckxGloria}, the effective viscosity tensor is then classically expressed in terms of a corrector problem without small parameter, set in $\R^3 \setminus (\cup_i B_i)$. The corrector equations are  
 \begin{equation} \label{corrector}
\begin{aligned}
 -\Delta \Phi_S  + \na P_S   & =  0, \quad  x \in \R^3 \setminus (\cup_{i \in \N} B_i), \\
 \div \Phi_S  & =  0, \quad  x \in \R^3  \setminus (\cup_{i \in \N} B_i), \\
D \Phi_S  + S & =  0, \quad  x \in \cup_{i \in \N} B_i, \\
\int_{\pa B_i} \sigma(\Phi_S+S, P_S) n  & =  0, \quad  \forall i \in \N, \\ 
\int_{\pa B_i} \sigma(\Phi_S+S, P_S) n \times (x-x_i)  & =  0, \quad  \forall i \in \N. 
\end{aligned}
\end{equation}
The unique solvability of the corrector equation is given by 
\begin{proposition} \label{prop_corrector} {\bf \cite[Proposition 2.1]{DuerinckxGloria}}

\sspace
Assume that $(x_i)_{i \in \N}$ is given by an ergodic stationary point process satisfying the separation condition \eqref{H1}. 
Then,  there exists a unique random field $(\Phi_S, P_S)$ such that 
\begin{itemize}
\item Almost surely, $(\Phi_S, P_S)$ is a solution of \eqref{corrector} in $H^1_{loc}(\R^3) \times L^2_{loc}(\R^3\setminus \cup B_i)$, 
\item $\na \Phi_S, \: P_S 1_{\R^3 \setminus \cup B_i}$ are stationary,  
\item $\E \na \Phi_S = 0$, $\quad \E P_S 1_{\R^3 \setminus \cup B_i} = 0$, $\quad \int_{\cup B_i} \Phi_S = 0$ almost surely. 
\item $\E \,  |\na \Phi_S|^2 + \E \, \big( p_S^2 1_{\R^3 \setminus \cup B_i} \big)  < +\infty$ 
\end{itemize}
\end{proposition}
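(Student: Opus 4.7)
The plan is to solve \eqref{corrector} via a variational formulation in an appropriate (affine) space of stationary random fields, and then derive the pressure and the force/torque balance as Euler--Lagrange conditions. This is the standard stochastic homogenization blueprint, adapted to the rigidity constraints inside the inclusions.

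Step 1: Hilbert space and variational problem. I would introduce the space of admissible perturbations $\mV$ consisting of random fields $\Psi : \Omega \to H^1_{loc}(\R^3;\R^3)$, defined modulo additive constants, such that $\na \Psi$ is stationary with $\E \na \Psi = 0$ and $\E |\na \Psi|^2 < +\infty$, $\div \Psi = 0$, and $D\Psi = 0$ on $\cup_i B_i$. The separation condition \eqref{H1} yields a stationary Korn inequality on $\mV$: since each $\Psi \in \mV$ is rigid on every $B_i$, the full gradient is controlled by the fluid strain, and
\begin{equation*}
\langle \Psi, \Psi'\rangle := 2\E \int_Q D\Psi : D\Psi' \, 1_{\R^3 \setminus \cup B_i} \qquad (Q \text{ a fixed unit cube})
\end{equation*}
defines a Hilbert norm on $\mV$. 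I would then seek $\Phi_S$ in the affine subspace $\Phi_0 + \mV$, where $\Phi_0$ is a field satisfying $D\Phi_0 = -S$ in $\cup B_i$ together with the other structural constraints; \eqref{H1} ensures the existence of such a $\Phi_0$, by prescribing $\Phi_0 = -Sx + \text{const}$ in each $B_i$ and performing a Bogovskii-type divergence-free extension in the uniform fluid layer guaranteed by (H1). The functional $\Phi \mapsto \E \int_Q |D\Phi + S|^2 1_{\R^3\setminus \cup B_i}$ is strictly convex and coercive on this affine subspace, so it admits a unique minimizer $\Phi_S$, characterized by the weak Euler equation $\E \int_Q (D\Phi_S + S) : D\Psi' \, 1_{\R^3 \setminus \cup B_i} = 0$ for all $\Psi' \in \mV$.

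Step 2: Recovery of the PDE, pressure, and force/torque. The weak identity combined with ergodicity and a density argument gives, in a.e.\ realization, $(D\Phi_S + S) : D\psi = 0$ against every smooth compactly supported divergence-free $\psi$ vanishing on $\cup B_i$. By de Rham's lemma this produces a local pressure $P_S$ with $-\Delta \Phi_S + \na P_S = 0$ in $\R^3 \setminus \cup B_i$, defined up to additive constants on connected components; imposing $\E P_S 1_{\R^3 \setminus \cup B_i} = 0$ fixes $P_S$, and the stationarity of $P_S 1_{\R^3 \setminus \cup B_i}$ together with the $L^2$ bound follow from a cell-by-cell Ne\v{c}as-type pressure estimate made uniform by \eqref{H1}. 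The force and torque balance on each $B_i$ are extracted by feeding into the weak identity stationary test fields $\Psi' \in \mV$ prescribing arbitrary translational and rotational motion on the particle family: integration by parts isolates, on each $\partial B_i$, the pairing of $\sigma(\Phi_S + Sx, P_S)n$ with the prescribed rigid motion, and arbitrariness forces both to vanish on every $B_i$ almost surely.

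Step 3: Uniqueness and main obstacle. Uniqueness of $(\Phi_S, P_S)$ follows from strict convexity of the minimized functional and from the imposed normalizations. The main obstacle lies in the two structural estimates that underpin the Hilbert framework: (i) the stationary Korn inequality ensuring that the fluid strain norm controls the full gradient norm on $\mV$; (ii) the stationary Ne\v{c}as-type pressure estimate needed for $P_S \in L^2$. Both rely on a uniform fluid layer around each inclusion, which is precisely what the hardcore condition \eqref{H1} with $R_0 > 2$ provides, allowing the standard extension, truncation, and Bogovskii-type constructions to be carried out uniformly across the random configuration.
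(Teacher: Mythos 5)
The paper does not actually prove Proposition \ref{prop_corrector}: it is quoted verbatim from \cite[Proposition 2.1]{DuerinckxGloria}, so there is no internal proof to compare against. Your variational scheme --- minimizing $\E\,|D\Phi+S|^2$ over an affine set of divergence-free fields with stationary, mean-zero, square-integrable gradient that are rigid on the inclusions, with \eqref{H1} supplying the admissible competitor and the uniform Korn and Bogovskii/Ne\v{c}as estimates --- is precisely the standard construction used in that reference, and your extraction of the force and torque balances by testing with rigid motions on the particles is correct. The one step you pass over too quickly is the transfer from the Euler--Lagrange identity on $\mV$ (a statement tested against \emph{stationary} random fields) to the almost-sure weak formulation against \emph{deterministic compactly supported} test functions: such test functions do not belong to $\mV$, so this requires either approximating local test fields by stationary ones (e.g.\ testing with products of a cut-off in $x$ and a stationary field) or the Weyl-type decomposition of the probability space used in \cite{DuerinckxGloria}; as written, ``ergodicity and a density argument'' names the nontrivial content of that step rather than supplying it.
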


\mspace
From there, one can define  the effective or homogenized viscosity tensor $\mu_h$ in terms of the solutions $\Phi_S$, $S$ varying in  $\text{Sym}_{3,\sigma}(\mathbb{R})$. It is the element of  $\text{Sym}\left(\text{Sym}_{3,\sigma}(\mathbb{R})\right)$ given by: 
\begin{equation} \label{effective_viscosity}
\forall S \in \text{Sym}_{3,\sigma}(\mathbb{R}), \quad \mu_h S : S \: := \:   \E |D(\Phi_S + S)|^2
\end{equation}
The relevance of $\mu_h$ to the homogenization problem is asserted by the following proposition. For later notational convenience, we denote $\eps = N^{-\frac13}$, $N \gg 1$.  
\begin{proposition} {\bf (adapted from \cite[Theorem 1]{DuerinckxGloria})} \label{thm_homogenization}

\sspace
Let $(x_i)_{i \in \N}$  a stationary ergodic point process satisfying \eqref{H1}, $B_i := \overline{B(x_i, 1)}$, $i \in \N$. Let $\mO$ any  smooth bounded domain, and 
$$I_N := \{i, \: B_i \subset  N^{\frac13} \mO\}, \quad v_{N,S} := N^{-1/3} u_{I_N,S}(N^{\frac13} \cdot), \quad u_{I_N,S} \: \text{ solution of \eqref{stokes} with } I = I_{N}. $$
Then, almost surely, $v_{N,S}$ goes weakly in $\dot{H}^1(\R^3)$ to $v_{\mathrm{eff},S}$ satisfying 
\begin{equation} \label{stokes_eff}
\begin{aligned}
- \div \Big( 2 \mu_{\mathrm{eff}} D v_{\mathrm{eff},S} \Big)  + \na q_{\mathrm{eff},S} & =  0, \quad  x \in \R^3  \\
\nonumber \div   v_{\mathrm{eff},S} & =  0, \quad  x \in \R^3  \\
\nonumber \lim_{ |x| \rightarrow +\infty} v_{\mathrm{eff},S} - S x & =  0, 
\end{aligned}
\end{equation}
where $\mu_{\mathrm{eff}} = \mu_{\mathrm{eff}}(x) \in \text{Sym}\left(\text{Sym}_{3,\sigma}(\mathbb{R})\right)$ is defined by: 
\begin{equation*}
 \mu_{\mathrm{eff}}(x) S : S  = |S|^2, \quad x \in \mO^c, \quad  \mu_{\mathrm{eff}}(x) S : S   = \mu_h S : S, \quad x \in \mO, 
 \end{equation*}
where $\mu_h$ is defined in \eqref{effective_viscosity}.
\end{proposition}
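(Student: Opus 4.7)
The plan is to apply Tartar's method of oscillating test functions, adapted to the Stokes system with rigid inclusions and combined with Birkhoff's ergodic theorem, in the spirit of \cite[Theorem 1]{DuerinckxGloria}.

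First, I would establish a uniform $\dot{H}^1$ estimate on $v_{N,S}$. Testing \eqref{stokes} with $I = I_N$ against $u_{I_N,S} - \widetilde{Sx}$, where $\widetilde{Sx}$ is a divergence-free, compactly supported extension of $Sx$ that is rigid inside each $B_i$ (constructed by a Bogovskii correction on the annulus around each ball, whose uniform existence follows from the hardcore assumption \eqref{H1}), the force and torque balance conditions allow one to reduce the left-hand side to $\int_{\R^3 \setminus \cup_{i \in I_N} B_i} |\na u_{I_N,S}|^2$, while the right-hand side is controlled by $|S|^2 N |\mO|$. The change of variables $y = N^{1/3}x$ then yields $\|\na v_{N,S}\|_{L^2(\R^3)} \lesssim |S|$, so up to extraction $v_{N,S} \rightharpoonup v_*$ weakly in $\dot{H}^1(\R^3)$, almost surely.

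Next, I would identify the weak limit using oscillating test functions built from the corrector of Proposition \ref{prop_corrector}. For $S' \in \text{Sym}_{3,\sigma}(\R)$ and $\varphi \in C^\infty_c(\mO)$, consider
\begin{equation*}
w_{N,S'}(x) := \varphi(x)\bigl( S' x + N^{-1/3}\Phi_{S'}(N^{1/3}x) \bigr),
\end{equation*}
locally modified inside each rescaled ball $N^{-1/3}B_i$ so as to be rigid there, with a Bogovskii correction of $\dot{H}^1$-size $o(1)$ on the surrounding annulus. Birkhoff's theorem, together with the sublinearity at infinity of a stationary-gradient, zero-mean random field, yields almost surely $N^{-1/3}\Phi_{S'}(N^{1/3}\cdot) \to 0$ strongly in $L^2_{loc}(\R^3)$ and $\na\Phi_{S'}(N^{1/3}\cdot) \rightharpoonup \E\na\Phi_{S'} = 0$ weakly in $L^2_{loc}(\R^3)$. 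Pairing \eqref{stokes} for $v_{N,S}$ against $w_{N,S'}$, and reciprocally the rescaled \eqref{corrector} against $\varphi v_{N,S}$, and subtracting, every boundary term vanishes by rigidity and by the force/torque balance. The remaining bulk integrals reduce to ergodic averages of $\varphi(x)\, D(\Phi_S + S) : D(\Phi_{S'} + S')$, which by Birkhoff converge to $\mu_h S : S'\, \int_{\R^3} \varphi$ through the polarization of \eqref{effective_viscosity}. Pressures are reconstructed by a standard De Rham argument. Outside $\mO$ the same procedure with trivial corrector produces viscosity $1$. Uniqueness of the limiting Stokes problem then promotes subsequential convergence to convergence of the full sequence.

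The main obstacle I expect lies in the treatment of balls straddling the rescaled boundary $\partial(N^{1/3}\mO)$: by smoothness of $\partial\mO$ and \eqref{H1} these are $O(N^{2/3})$ in number, so their total energy contribution is $o(N)$ and disappears in the limit, but the matching of the interior and exterior limits at $\partial\mO$ requires a careful cut-off argument. A secondary technical difficulty is the uniform construction of Bogovskii-type extension operators on the annuli around each ball and the almost sure sublinearity of the corrector at infinity; both follow from \eqref{H1} exactly as in \cite{DuerinckxGloria}.
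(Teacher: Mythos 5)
Your sketch is a correct outline of the classical qualitative stochastic homogenization argument, but note that the paper does not actually prove this proposition: it is quoted from \cite[Theorem 1]{DuerinckxGloria}, with the remark that passing from Dirichlet data on $\pa\mO$ to the extension by a homogeneous Stokes solution outside $\mO$ requires only minor modifications, the reader being referred to Appendix \ref{appA} for the flavour of those modifications. Where a proof is written out in the paper (Appendix \ref{appA}, for the closely related Proposition \ref{approx_effective_viscosity}), the route differs from yours: instead of Tartar's oscillating test functions and a div--curl pairing, the paper constructs the two-scale approximation $\overline{v}_\eps$, equal to the recentered $\eps\Phi_S(\cdot/\eps)$ inside $B(0,1)$ and to a homogeneous Stokes solution outside, and proves the two energy statements $\frac{1}{|B(0,1)|}\int|D\overline{v}_\eps|^2\to\E|D\Phi_S|^2$ and $\int|D(v_\eps-\overline{v}_\eps)|^2\to 0$, following \cite{GVH}. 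That corrector-comparison argument buys strong convergence of energies, which is exactly what \eqref{bigcell_limit} requires, whereas your div--curl scheme most naturally yields the weak convergence stated here and would need an extra step (weak lower semicontinuity plus the reverse inequality from a corrector-based competitor) to recover the energy identity used later. Two imprecisions in your sketch are worth flagging, though neither is fatal: $\na v_{N,S}$ is not in $L^2(\R^3)$ since it tends to $S$ at infinity, so the a priori bound must read $\|D v_{N,S}-S\|_{L^2}^2=N^{-1}\|D u_{I_N,S}-S\|_{L^2}^2\lesssim|S|^2$, obtained from the variational characterization of $u_{I_N,S}-Sx$ against a competitor supported near the $O(N)$ balls (with \eqref{H1} guaranteeing disjoint annuli for the Bogovskii corrections); and the test field $\varphi(x)\bigl(S'x+N^{-1/3}\Phi_{S'}(N^{1/3}x)\bigr)$ is neither divergence free nor rigid in the inclusions, so the commutator terms in $\na\varphi$ and the per-ball rigidifications must be shown to vanish using the strong $L^2_{loc}$ convergence of the rescaled corrector --- you name the fix, but this is where the bulk of the technical work of \cite{DuerinckxGloria} sits.
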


\mspace
Strictly speaking, only the case of a bounded domain $\mO$ with Dirichlet condition is considered in \cite{DuerinckxGloria}, but the case considered here (extension by the homogeneous Stokes solution outside $\mO$) can be covered as well with minor modifications. See also the proof of Proposition  \ref{approx_effective_viscosity} in Appendix \ref{appA}. 
 
\mspace
Still, the numerical approximation  of the solution of \eqref{corrector} is very demanding, so that such kind of effective model remains in practice hard to implement. To overcome this issue, physicists try to obtain simplified models in particular subregimes, notably in the dilute case, that is at small solid volume fraction $\phi$. This is all the more relevant here that system \eqref{stokes} seems to be appropriate only for $\phi \lesssim 0.2$ : beyond this typical value, it is acknowledged that frictional interactions play a substantial role on top of hydrodynamic interactions \cite{GuaPou}.  
 The goal is then to obtain effective {\em approximate} models, meaning with an error $o(\phi^\alpha)$ for some positive $\alpha$, rather than effective {\em limit} models.  To obtain such approximate models, one may try to find an expansion in powers of $\phi$ of the homogenized tensor $\mu_h$. Or one can try to bypass homogenization theory, by exhibiting directly a $o(\phi^\alpha)$ approximation of the solution $v_{N,S}$ in Proposition \ref{thm_homogenization}. 

\mspace
A famous first step in this direction was made by Einstein \cite{Einstein}. He showed that if the suspension is homogeneous, and if the interaction between the particles can be neglected, $o(\phi)$ approximation is given by $\mu_{Ein} = 1 + \frac{5}{2} \phi$. Many works, especially over the last two years, have been devoted to the justification of this claim, trying to identify milder and milder geometrical assumptions on the particles configuration under which particles interaction is indeed neglectible  \cite{MR813656,MR813657,Haines&Mazzucato,MR4102716,MR4098775}. To our knowledge, justification of Einstein's formula under the current mildest requirements is found in \cite{GVRH}.  Let us note that this work is of a deterministic nature:  it does not use the existence of an effective viscosity, or some specific random structure of the set of particles. This is consistent with the fact that Einstein's formula is about neglecting the interaction of small scale particles, while homogenization theory is about understanding the macroscopic  effect of such small scale interaction. 

\mspace
The derivation of a $o(\phi^2)$ approximation turns out to be more tricky. A rigorous treatment was carried in the recent paper \cite{GVH}, and further refined in \cite{GVM}, under a strong assumption on the minimal distance between the particles. In the context of the random process $(x_i)_{i \in \N}$ introduced here,  it corresponds to saying that the particles $x_i$ inside $N^{\frac13} \mO$ satisfy 
\begin{equation} \label{Hstrong}
 \exists c > 0, \quad  \inf_{i \neq j} |x_i - x_j| \ge c \phi^{-\frac13} \quad \text{almost surely} 
 \end{equation}
where $\phi = \E 1_{\cup_i B_i}$ is the solid volume fraction. Obviously, this assumption is  more stringent than \eqref{H1}. Its main advantage is that, as particles are far from one another, one can solve systems of the form \eqref{stokes} thanks to the so-called method of reflections, {\it cf} \cite{HV}. This allows to rely on more explicit formulas, and to show the existence of a $o(\phi^2)$ effective approximate viscosity in the form 
\begin{equation} \label{formula_muapp}
 \mu_{app} = 1 +  \frac{5}{2} \phi + \Big(  \lim_{N \rightarrow +\infty} \frac{1}{N^2} \sum_{x_i \neq x_j \in N^{\frac13} \mO} \mM(x_i - x_j) - \int_{\mO \times \mO} \mM(x-y) dx dy \Big) \phi^2
\end{equation}
for $\mM$ an explicit function homogeneous of order $-3$.  

\mspace
Let us point out that, as in \cite{GVRH}, part of the work carried in \cite{GVH, GVM} is not related to homogenization. More precisely, one does not impose  {\it a priori}   some specific random stationary or periodic structure on the points $x_i$:  roughly, as shown in \cite{GVM},  a necessary and sufficient condition for the existence of a $o(\phi^2)$ effective approximate viscosity is the existence of the mean field limit at the r.h.s. of \eqref{formula_muapp}, as well as local versions of it. Still, contrary to Einstein's formula,  this mean field limit involves small scale interactions of the point process. Hence, restricting to the usual homogenization setting is legitimate. Under such setting (periodic or random), following recent progress in the analysis of Coulomb gases \cite{MR3309890}, we were able to give an explicit formula for this mean field limit, and to justify  in this way results from the physics literature (while discarding others). For instance, in the random case, when the $2$-point correlation function $\rho_2$ of the process is radial and converges fast enough to $\phi^2$, one can show that 
$$ \mu_{app} = 1 + \frac{5}{2} \phi + \frac{5}{2} \phi^2. $$

\mspace
{\em The goal of the present paper is to go beyond  this analysis, and derive a $o(\phi^2)$ formula without the strong  assumption \eqref{Hstrong}.}  This problem was tackled at the formal level in a celebrated paper of Batchelor and Green \cite{BG}. See also \cite{AB, Hin}. In a first part, the authors derive a formula for the second order correction, involving the two point correlation function of the process $\rho_2$ and the field $\Phi_{x,y}$ that solves the Stokes problem \eqref{stokes} in the case of two balls centered at $x$ and $y$. In a second part, based on the companion paper \cite{BG2}, they discuss the possible form of the two-point correlation function of the process, and deduce approximate numerical values for the second order correction. For instance, in the case where the particles are driven by a strain at infinity like in \eqref{stokes}, they derive the formula 
$$ \mu_{app} = 1 + \frac{5}{2} \phi + 7.6 \phi^2. $$

\mspace
We focus here on the first part of \cite{BG}: our purpose is to recover rigorously a $o(\phi^2)$ approximation in terms of  $\rho_2$ and $\Phi_{x,y}$. We shall notably clarify the loose decorrelation arguments  used in \cite{BG},  and bypass the so-called renormalization technique used there. Our strategy, as well as the precise statement of our results will be given in the next section. Comparison to the recent paper \cite{DueGloria} on the same topic will also be made. Let us stress right away that the present paper is not a simple technical improvement of  \cite{GVH, GVM}. Our relaxation of \eqref{Hstrong} allows to cover a totally different class of random distribution of particles, like those associated with thinned Poisson point processes. This corresponds to a different kind of diluteness mechanism, that is more realistic. Indeed, one must keep in mind that model \eqref{stokes} is a snapshot of the suspension at a given time $t$. To analyse the time evolution of the suspension, one should consider in a second step the differential equations $\dot{x_i} = u_i$, with $u_i$ the translational velocity of ball $B_i$. In particular, it is very unlikely that the strong condition \eqref{Hstrong} on the inter-particle distance be preserved through time, see \cite{BG2}. Hence, going from \eqref{Hstrong} to  \eqref{H1}, even though it is probably not enough, is an important effort in order to capture the good dynamics of dilute suspensions. Furthermore, the mathematical approach used in  \cite{GVH, GVM}, based on the method of reflections, collapses in the present context, so that we need several new tools, like the cluster expansions described below. The main point is to handle short range correlations, which are not present in our previous studies. As a result, the formula that we derive for the effective viscosity differs from the one in  \cite{GVH, GVM}.

\section{Explanation and statement of the results}
\subsection{Cluster expansions} \label{subsec_cluster}
Our goal is to provide a second order expansion in $\phi$ of the effective viscosity tensor $\mu_h$ defined in \eqref{effective_viscosity}. We recall that $(x_i)_{i \in \N}$ is an ergodic stationary process,  that $\displaystyle B_i := \overline{B(x_i, 1)}$ and that  
\begin{equation}
 \phi := \E 1_{\cup_{i \in \N} B_i}.
\end{equation}
 Our starting point is the following 
 \begin{proposition}  \label{approx_effective_viscosity}
Let $(x_i)_{i \in \N}$ an ergodic stationary point process satisfying \eqref{H1}, and 

\sspace
$\displaystyle I_N := \{i, \: B_i \subset B(0,N^{\frac13})\}$. Then, 
\begin{align*}
 \mu_h S : S  & =  |S|^2 +   \lim_{N \rightarrow +\infty} \E  \frac{1}{ 2 |B(0,N^{\frac13})|} \sum_{i \in I_N}  \int_{\pa B_i} \big( \sigma(u_{I_N,S}, p_{I_N,S})n - 2 u_{I_N,S}  \big)   \cdot Sn 
 \end{align*}
\end{proposition}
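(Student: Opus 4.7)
My approach is to start from the energy formula $\mu_h S : S = \E|D(\Phi_S + S)|^2$ from \eqref{effective_viscosity}, and to convert this ensemble average into the claimed sum of boundary integrals via integration by parts and the ergodic theorem. Set $U := \Phi_S + Sx$ and $B_N := B(0, N^{1/3})$. Since $DU = D\Phi_S + S$ is stationary with finite second moment (Proposition \ref{prop_corrector}), the ergodic theorem gives, almost surely,
\begin{equation*}
\mu_h S : S = \E|DU|^2 = \lim_{N\to\infty}\frac{1}{|B_N|}\int_{B_N}|DU|^2.
\end{equation*}
By the third line of \eqref{corrector}, $DU = 0$ on $\cup B_i$, so the integration reduces to the fluid region $B_N\setminus\cup B_i$, where $U$ solves the Stokes system, is divergence-free, and is a rigid motion on each $\partial B_i$ with zero net force and torque.

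The core computation is to expand $|DU|^2 = |D\Phi_S|^2 + 2\,D\Phi_S : S + |S|^2$ and integrate each piece by parts. The constant term contributes $(1-\phi)|S|^2$. The cross term $D\Phi_S : S$ is a pure divergence (since $S$ is constant, symmetric, trace-free), and Green's formula together with the substitution $\Phi_S = U - Sx$, the rigidity of $U$ on $\partial B_i$, and the unit-sphere identities $\int_{\partial B_i} n = 0$ and $\int_{\partial B_i} n\otimes n = \tfrac{4\pi}{3}I$ (combined with $x - x_i = n$ on $\partial B_i$) yields a contribution $2\phi|S|^2$ in the limit. The corrector term $\int|D\Phi_S|^2$, via the Stokes identity $2|D\Phi_S|^2 = \div(\sigma(\Phi_S,P_S)\Phi_S)$ in the fluid and the same spherical accounting applied to each $\partial B_i$ boundary integral, reduces to $\lim_N\tfrac{1}{2|B_N|}\sum_{B_i\subset B_N}\int_{\partial B_i}\sigma(U,P_S)n\cdot Sn - \phi|S|^2$. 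The outer boundary contributions on $\partial B_N$ vanish in the limit by stationarity of $\nabla\Phi_S$ and of $P_S\,1_{\R^3\setminus\cup B_i}$. Adding the three pieces, the $|S|^2$ contributions collapse to $|S|^2(1 - \phi + 2\phi - \phi) = |S|^2$ and we obtain
\begin{equation*}
\mu_h S : S = |S|^2 + \lim_{N\to\infty}\frac{1}{2|B_N|}\sum_{B_i\subset B_N}\int_{\partial B_i}\sigma(U,P_S)n\cdot Sn.
\end{equation*}

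It remains to pass from this corrector stresslet formula to the statement. Two points arise. First, the extra $-2\,u_{I_N,S}\cdot Sn$ term in the statement is identically zero: on $\partial B_i$ the velocity $u_{I_N,S}$ is rigid, and the elementary identities $\int_{\partial B_i}Sn = 0$, $\int_{\partial B_i} n\times Sn = 0$ (both immediate from $\int n\otimes n = \tfrac{4\pi}{3}I$ and the symmetry of $S$) kill its contribution. Its inclusion therefore only serves to match the Batchelor--Green notation. Second---and this is the main obstacle---one must replace the corrector $U = \Phi_S + Sx$ inside the stresslet by the finite-volume solution $u_{I_N,S}$ (and the spatial average by $\E$, which is immediate via Palm theory or Campbell's formula). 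This quantitative comparison rests on the Stokes far-field decay $u_{I_N,S} - Sx = O(|x|^{-1})$ outside $\cup_{i\in I_N}B_i$ together with the $L^2$-control $\E|\nabla\Phi_S|^2 < \infty$ of Proposition \ref{prop_corrector}: they imply that the discrepancy of stresslets on a ball $B_i$ is small when $B_i$ is deep inside $B_N$, while balls in a boundary layer of $\partial B_N$ occupy only $O(|B_N|^{2/3})$ volume and are negligible after division by $|B_N|$. The precise (mostly deterministic) argument is carried out in Appendix \ref{appA}.
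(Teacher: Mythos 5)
Your overall strategy (energy formula, ergodic theorem, integration by parts, comparison of corrector with finite-volume problem) is the right one, and your bookkeeping of the $|S|^2$ contributions and the observation that $\int_{\pa B_i} u_{I_N,S}\cdot Sn=0$ by rigidity are correct. But the order in which you perform the two key operations creates a genuine gap. The paper first proves the energy identity
\begin{equation*}
\E |D\Phi_S|^2 \;=\; \lim_{N\to\infty}\E\,\frac{1}{|B(0,N^{1/3})|}\int_{\R^3}|Du_{I_N,S}-S|^2,
\end{equation*}
i.e.\ it compares the corrector with the finite-volume solution at the level of Dirichlet energies (via the rescaled fields $v_\eps$, $\overline v_\eps$ and \cite[Propositions 5.2 and 5.3]{GVH}), and only then integrates by parts --- on $u_{I_N,S}$ itself, for which the computation is exact: $u_{I_N,S}-Sx\in\dot H^1(\R^3)$ decays at infinity, so there are no outer boundary terms and no error terms. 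You instead integrate by parts on the corrector $U=\Phi_S+Sx$ over $B_N$ and then try to swap $U$ for $u_{I_N,S}$ inside the stresslets. The justification you offer for that swap --- the far-field decay $u_{I_N,S}-Sx=O(|x|^{-1})$ together with $\E|\na\Phi_S|^2<\infty$ --- does not do the job: the decay of $u_{I_N,S}-Sx$ at infinity says nothing about how close $u_{I_N,S}$ is to the infinite-volume corrector \emph{in the bulk of} $B_N$. That proximity is exactly the content of the finite-volume approximation of the corrector problem, which is the actual hard step of the proposition; it cannot be obtained from decay estimates on the two solutions separately, and deferring "the precise argument" to Appendix \ref{appA} is circular since the appendix carries out a differently structured proof.

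A secondary but real issue is your treatment of the boundary terms on $\pa B_N$ arising from integrating $\div(\sigma(\Phi_S,P_S)\Phi_S)$ over $B_N$: these do not "vanish by stationarity of $\na\Phi_S$ and $P_S 1_{\R^3\setminus\cup B_i}$". They involve $\Phi_S$ itself (not its gradient) paired with the stress on a sphere of area $\sim N^{2/3}$, so one needs sublinearity of $\Phi_S$ \emph{and} control of surface (not volume) averages of $|\sigma(\Phi_S,P_S)|$, which requires either a quantitative input or a selection/averaging over radii. Even granting an $L^2$-gradient comparison $\frac1N\int_{B_N}|D(u_{I_N,S}-U)|^2\to 0$, converting it into convergence of the differences of boundary stresslets (which involve the pressure trace) needs an additional argument. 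All of these difficulties are avoided by the paper's ordering, which keeps the corrector-versus-finite-volume comparison at the level of the quadratic energy, where it is a well-posed variational statement.
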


\mspace
This result is commented and proved in Appendix \ref{appA}.

\mspace
 The idea is to obtain an expansion of $\mu_h S : S$ through a so-called {\em cluster expansion} of $u_{I_N,S}$.   Such {\em cluster expansions} were introduced by physicists in various problems from statistical or solid physics. A good reference for this is the paper \cite{Feld82}. This kind of expansion was used recently in \cite{MR3458165}  in the context of elliptic homogenization: the authors proved that when the diffusion matrix is built through Bernoulli perturbations of parameter $p$, the homogenized matrix depends analytically on $p$. The notion of cluster expansion  is  based on the following remark.  Let $J$ a subset of $\N$, and  $F = F(I)$ defined on the collection $\mathcal{P}_J$ of  finite subsets  $I \subset J$.  We define $G$ on $\mathcal{P}_J$  by the following relation: 
\begin{equation}
F(I) = \sum_{I' \subset I} G(I'), \quad \forall I \in  \mathcal{P}_J. 
\end{equation}
It is easily seen that this relation defines $G$, with the first relations: 
\begin{align*}
\# I = 0 &: \quad G(\emptyset) = F(\emptyset), \\
\# I = 1 &: \quad  G(\{i\}) = F(\{i\}) - G(\emptyset) = F(\{i\}) - F(\emptyset) \\
\# I = 2 &: \quad G(\{i,j\}) =  F(\{i,j\}) - G(\{i\})  - G(\{j\}) - G(\emptyset) = F(\{i,j\}) - F(\{i\})  - F(\{j\}) + F(\emptyset) 
\end{align*} 
More generally, one can prove by induction on $\#I$  that 
$$ G(I) = \sum_{I' \subset I} (-1)^{\#I - \#I'} F(I'). $$ 

\mspace
Inspired by this kind of expansion, we look for an approximation 
\begin{equation} \label{approx_u_NS}
\begin{aligned}
 u_{I_N,S} & = u_{\emptyset, S} + \sum_{\{k\} \subset I_N  } u_{\{k\}, S} - u_{\emptyset, S} +  \sum_{\substack{\{k,l\} \subset I_N,\\ k \neq l }} \big(  u_{\{k,l\}, S} - u_{\{k\},S}  - u_{\{l\},S} + u_{\emptyset, S} \big) + \dots  \\
 & =  Sx  + \sum_{\{k\} \subset I_N  } \Phi_{\{k\}}  +  \sum_{\substack{\{k,l\} \subset I_N,\\ k \neq l }} \big(  \Phi_{\{k,l\}} - \Phi_{\{k\}}  - \Phi_{\{l\}}  \big) + \dots  
\end{aligned}
\end{equation}
where, omitting the dependency in $S$ for brevity, we denoted 
\begin{equation} \label{def_Phi_I} 
 \Phi_I := u_{I,S} - u_{\emptyset, S} = u_{I,S} - Sx, \quad P_I := p_{I,S}. 
\end{equation} 

\mspace
The hope is that in the dilute regime where the volume fraction $\phi$ of the suspension is small, the successive terms in this approximation will contribute to successive powers of $\phi$ in a possible expansion of the effective viscosity. 

\subsection{Formal expansion of the effective viscosity} \label{subsec_formal}
Let 
\begin{equation} \label{def_I_i}
 \mI_i(u,p) := \int_{\pa B_i} \big(  \sigma(u,p)n - 2 u \big)  \cdot Sn. 
 \end{equation}
 An important property to notice  is that for any smooth $(u,p)$ on $B_i$, 
\begin{equation} \label{property_Ii}
 (u,p) \:\: \text{solution of homogeneous Stokes equations in a vicinity of } B_i \: \Rightarrow \:  \mI_i(u,p)  = 0. 
 \end{equation}  
Indeed, by elliptic regularity, $u,p$ are smooth near $B_i$, and 
 $$ \mI_i(u,p)  =  \int_{B_i} \div \sigma(u,p) : S(x-x_i) + \int_{B_i} 2 D(u) : S -  \int_{\pa B_i} 2 u \cdot Sn = 0. $$
We plug the formal approximation \eqref{approx_u_NS} in Proposition  \ref{approx_effective_viscosity} to find  
\begin{equation}
\begin{aligned}
& \mu_h S : S   = |S|^2  \\
& +  \lim_{N \rightarrow +\infty}\E \frac{1}{ 2 |B(0,N^{\frac13})|}  \sum_{i \in I_N}  \bigg( & \mI_i(u_\emptyset,0)   
 +  \sum_{k \in I_N} \mI_i(\Phi_{\{k\}}, P_{\{k\}})   
 +  \sum_{\substack{\{k, l\} \subset I_N \\ k \neq l}}  \mI_i(\Psi_{\{k,l\}}, P_{\{k,l\}}) \bigg) \\ 
& + \dots
\end{aligned}
\end{equation}
where 
\begin{equation}  \label{def_Psi_kl}
\Psi_{k,l} := \Phi_{\{k,l\}} - \Phi_{\{k\}}  - \Phi_{\{l\}}
\end{equation}
Clearly, by property \eqref{property_Ii}, $\mI_i(u_\emptyset,0) = 0$, so that
\begin{align*}
&\lim_{N \rightarrow +\infty} \E \frac{1}{ 2 |B(0,N^{\frac13})|} \sum_{i \in I_N}  \mI_i(u_\emptyset,0) =  0.   
\end{align*}
For the next term, we notice that $\Phi_{\{k\}}$ is  explicit:  
$$\Phi_{\{k\}} = \Phi_0(x-x_k), \quad  P_{\{k\}} = P_0(x-x_k)$$ 
with
\begin{equation} \label{def_Phi0}
 \Phi_0(x) = -\frac{5}{2} S : (x \otimes x) \frac{x}{|x|^5} - \frac{Sx}{|x|^5} + \frac{5}{2} (S : x \otimes x) \frac{x}{|x|^7}, \quad P_0(x) = -5 \frac{S : (x \otimes x)}{|x|^5}.
 \end{equation}
A tedious calculation yields, denoting $B_0 := \overline{B(0,1)}$: 
$$ D \Phi_0\vert_{\pa B_0}(x) = - 5 (S : x \otimes x)   x \otimes x + \frac{5}{2} \Big( Sx  \otimes x + x \otimes Sx \Big) - S $$
so that 
\begin{equation} \label{stress_tensor_Phi_0}
\sigma\big(\Phi_{0} , P_{0}\big) n =  \sigma\big(\Phi_{0} , P_{0}\big) x =  3 Sx    \quad \text{at } \: \pa B_0.
\end{equation}
and eventually 
\begin{equation} \label{stress_tensor_Phi_0_bis} 
\mI_i(\Phi_{\{i\}}, P_{\{i\}})  =    \int_{\pa B(0,1)} (\sigma(\Phi_0, P_0) n -  2 \Phi_0) \cdot Sn  = \frac{20 \pi}{3} |S|^2.
\end{equation}
Furthermore, by \eqref{property_Ii}, for all $k \neq i$, 
$$ \mI_i(\Phi_{\{k\}}, P_{\{k\}}) = 0. $$
As $\E \, \frac{\sharp I_N}{N} \rightarrow \phi$, we end up with
\begin{align*}
 \mu_h S : S & = |S|^2 + \frac{5}{2}\phi |S|^2 
   +   \lim_{N \rightarrow +\infty}   \E  \frac{1}{ 2 |B(0,N^{\frac13})|} \sum_{i \in I_N}   \sum_{\substack{\{k, l\} \subset I_N \\ k \neq l}}  \mI_i(\Psi_{\{k,l\}}, P_{\{k,l\}}) + \dots 
\end{align*}
The last expectation can be further decomposed into 
\begin{align*}
&  \E  \frac{1}{ 2 |B(0,N^{\frac13})|} \sum_{i \in I_N} \sum_{\substack{\{k, l\} \subset I_N \\ k \neq l}}  \mI_i(\Psi_{\{k,l\}}, P_{\{k,l\}}) \\
 = &   \E  \frac{1}{ 2 |B(0,N^{\frac13})|} \sum_{i \neq k \in I_N}  \mI_i(\Psi_{\{i,k\}}, P_{\{i,k\}}) 
 +  \E  \frac{1}{ 4 |B(0,N^{\frac13})|} \sum_{i \neq k \neq l \in I_N}   \mI_i(\Psi_{\{k,l\}}, P_{\{k,l\}}) \\
 = & \E  \frac{1}{ 2 |B(0,N^{\frac13})|} \sum_{i \neq k \in I_N}  \mI_i(\Psi_{\{i,k\}}, P_{\{i,k\}}) 
 \end{align*}
 using again \eqref{property_Ii}. 
This expectation can be reformulated thanks to the second order reduced moment measure  of the underlying point process $(x_i)_{i \in \N}$. We remind that for any $k \in \N^*$, the {\em $k-$th order reduced moment measure} is the symmetric measure $\rho_k$  over $(\R^{3})^k$ defined by: for all $F \in C_c(\R^{3k})$,  
 $$ \sum_{i_1 \neq i_2 \neq \dots \neq i_k} \E F(x_{i_1}, \dots, x_{i_k}) = \int_{(\R^3)^k} F(y_1, \dots, y_k) \rho_k(d y_1, \dots, d y_k). $$
In the case $\rho_k$ has more regularity, for instance when it has a density with respect to the Lebesgue measure, this relation can be extended to discontinuous $F$. 
In such a case, $\rho_k(d x_1, \dots, d x_k)  = \rho_k(x_1, \dots, x_k) dx_1 \dots dx_k$ can be roughly seen as the joint probability of having one particle in a volume $dx_1$ near $x_1$, one particle  in a volume $dx_2$ near $x_2$ and so on. Note  that here 
$$\rho_1(dx) = \rho_1(x) dx :=  \frac{3}{4\pi} \phi \, dx.$$ 
Also, under  \eqref{H1}, $\rho_k$ is supported outside the sets $\{ |x_i - x_j| \le R_0 \}$, $1 \le i \neq j \le k$. 

\mspace
For any $y \neq z$, we denote $\displaystyle B_y = \overline{B(y,1)}$,  and $\Phi_y$, resp. $\Phi_{y,z}$  the solution of 
\begin{equation} \label{corrector_y}
\begin{aligned}
 -\Delta \Phi_y  + \na P_y   & =  0,  \quad  x \in \R^3 \setminus B_y, \\
 \div \Phi_y & =  0, \quad  x \in \R^3 \setminus B_y, \\
D \Phi_y  + S & =  0, \quad  x \in B_y, \\
\int_{\pa B_y} \sigma(\Phi_y, P_y) n  & =  0, \quad \int_{\pa B_y} \sigma(\Phi_y, P_y) n \times (x-y)   =  0,
\end{aligned}
\end{equation}
resp. 
\begin{equation} \label{corrector_yz}
\begin{aligned}
 -\Delta \Phi_{y,z}  + \na P_{y,z}   & =  0,  \quad  x \in \R^3 \setminus \big(B_y\cup B_z\big), \\
 \div \Phi_{y,z} & =  0, \quad   x \in \R^3 \setminus \big(B_y\cup B_z\big),  \\
D \Phi_{y,z}   + S & =  0, \quad  x \in B_y \cup B_z, \\
\int_{\pa B_y} \sigma(\Phi_{y,z} , P_{y,z}) n  & = 0, \quad  \int_{\pa B_z} \sigma(\Phi_{y,z} , P_{y,z}) n =  0, \\ 
\int_{\pa B_y} \sigma(\Phi_{y,z}, P_{y,z}) n \times (x-y)  & = 0, \quad \int_{\pa B_z} \sigma(\Phi_{y,z}, P_{y,z}) n \times (x-z) = 0.
\end{aligned}
\end{equation}
Finally, we set 
\begin{equation} \label{def_Psi_I_xy}
 \Psi_{y,z} := \Phi_{y,z} - \Phi_y - \Phi_z, \quad \mI_x(u,p) := \int_{\pa B_x} \sigma(u,p)n \cdot Sn 
 \end{equation}
We find, still formally,
 \begin{align*}
 \mu_h S : S & = |S|^2 + \frac{5}{2}\phi |S|^2  +  \lim_{N \rightarrow +\infty} \frac{1}{ 2 |B(0,N^{\frac13})|} \int_{x \neq y \in B(0,N^{\frac13})}  \mI_x(\Psi_{x,y}, P_{x,y}) \rho_2(dx,dy)   + \dots 
\end{align*}
Our goal is to prove such a formula for a large class of stationary point processes. 
\subsection{Statement of the results}
Let us  formulate our assumptions on the point process $(x_i)_{i \in \N}$. First, in order to apply Propositions \ref{prop_corrector} and \ref{approx_effective_viscosity}, we assume as before that 
$(x_i)_{i \in \N}$ is an ergodic stationary point process satisfying  \eqref{H1}. In particular, stationarity implies that for all $k$ and all $x_1, \dots, x_k$ : 
$$\rho_k(x_1, \dots, x_k) = \rho_k(0,x_2 - x_1, \dots, x_k - x_1).$$

\mspace
Moreover, we make the following assumptions on the reduced moment measures of the process: there exists $q \in (1,\infty)$,  $F \in L^q(\R^3) \cap L^\infty(\R^3)$ going to zero at infinity, independent of $\phi$, such that for all $1 \le k \le 5$, $\rho_k = \phi^k g_k(x_1, \dots, x_k) dx_1 \dots d x_k$, with the  correlation functions $g_k$  satisfying

\mspace
\begin{align}  
\label{H2} \tag{H2}
g_2(0,y) & =  1 + R_1(y), \\
\nonumber
 |R_1(y)| \:  & \le \:  F\big(y\big)  \\
 \nonumber 
 & \\
 \label{H3} \tag{H3}
g_3(0,y,z) & =   g_2(0,y) \big(g_2(0,z) + R_2(y,z)\big) \\
\nonumber
|R_2(y,z)|  & \le \:  F\big(y-z\big), \\
\nonumber 
 & \\
\label{H4} \tag{H4}
g_4(0,y,z,z') & =  g_3(0,y,z)  \Big( \frac{g_3(0,y,z')}{g_2(0,y)}  +  R_3(y,z,z') \Big) \\
\nonumber
|R_3(y,z,z')| \: & \le \: F\big(z-z'\big), \\
 \nonumber 
 & \\
\tag{H5} \label{H5} 
g_5(0,y,y',z,z') & = \frac{g_4(0,y,y',z) g_4(0,y,y',z')}{g_3(0,y,y')} + R_4(y,y',z,z') \\ 
\nonumber
|R_4(y,y',z,z')| & \le F(z-z')
\end{align}
We shall comment on these assumptions just below. Note that in all quotients appearing in the previous expressions, the numerator is zero whenever the denominator is zero, and the convention is that  the quotient is zero in that case.  Note also that correlation functions $g_k$ may still depend on $\phi$, although it is not explicit in our notations. Our two results are the following: 
\begin{proposition} {\bf (Formula  for  the $\phi^2$ coefficient)} \label{main_prop}

\sspace
Under assumptions \eqref{H1}-\eqref{H2}, the formula 
\begin{equation} \label{def_mu2}
\mu_2 S : S = \lim_{N \rightarrow +\infty} \frac{1}{ 2 |B(0,N^{\frac13})|} \int_{B(0,N^{\frac13})^2}  \mI_x(\Psi_{x,y}, P_{x,y}) g_2(x,y) dx dy  \end{equation}
defines an element of $\text{Sym}\left(\text{Sym}_{3,\sigma}(\mathbb{R})\right)$ (in particular, the limit exists). Moreover, $\mu_2$ is bounded uniformly in 
$\phi$.   
\end{proposition}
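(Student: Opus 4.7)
The plan is to reduce the double integral to a one-variable integral in the separation $h=y-x$, using stationarity and translation invariance, and then to exploit an angular cancellation in the leading far-field behaviour of the integrand so that the limit exists as $R\to\infty$.

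\emph{Step 1: reduction to a function of the separation.} By stationarity $g_2(x,y)=g_2(0,y-x)$, and by the translation invariance of \eqref{corrector_y}--\eqref{corrector_yz} the quantity $\mI_x(\Psi_{x,y},P_{x,y})$ depends only on $h:=y-x$. Call this function $f(h)$, defined for $|h|\ge R_0$ (and extended by $0$ for $|h|<R_0$, consistent with the vanishing of $g_2$ there). A change of variables then rewrites the double integral as
\begin{equation*}
  \frac{1}{2|B(0,R)|}\int_{B(0,R)^2}\mI_x(\Psi_{x,y},P_{x,y}) g_2(x,y)\,dx\,dy \;=\; \frac12 \int_{\R^3} f(h)\, g_2(0,h)\,\chi_R(|h|)\,dh,
\end{equation*}
where $\chi_R(r):=|B(0,R)\cap B(re_1,R)|/|B(0,R)|$ is radial in $h$, bounded by $1$, supported in $\{r\le 2R\}$, and converges pointwise to $1$ as $R\to\infty$.

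\emph{Step 2: decay and structure of $f$.} I would first obtain uniform bounds $|f(h)|\le C$ on compact shells $\{R_0\le|h|\le M\}$ by standard $H^1$ estimates for the two-ball problem (the minimal inter-ball distance is bounded below by $R_0-2>0$). For $|h|$ large, I would use the method of reflections: since $\mI_0$ vanishes on Stokes solutions regular near $B_0$, only $B_0$-centered reflections contribute to $\mI_0(\Psi_{0,h})$; the first one is the Einstein stresslet response on $B_0$ to the ambient strain $D\Phi_h(0)^{\mathrm{sym}}=D\Phi_0(-h)^{\mathrm{sym}}$. This yields
\begin{equation*}
  f(h) \;=\; f_1(h) + f_2(h), \qquad f_1(h):=\frac{20\pi}{3}\,D\Phi_0^{\mathrm{lead}}(-h)^{\mathrm{sym}}\!:\!S,
\end{equation*}
with $\Phi_0^{\mathrm{lead}}(y)=-\tfrac{5}{2}(S{:}y\otimes y)\,y/|y|^5$ the leading stresslet part of $\Phi_0$. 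The function $f_1$ is homogeneous of degree $-3$, and the remainder $f_2$ decays at infinity like $|h|^{-\alpha}$ for some $\alpha>3$: the next multipole of $\Phi_0$ contributes at order $|h|^{-5}$ and the second reflection $\Phi_0^{(2)}$ at order $|h|^{-6}$.

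\emph{Step 3: angular cancellation and passage to the limit.} The crucial observation is that $f_1$ has vanishing spherical mean at every radius,
\begin{equation*}
  \int_{|h|=r} f_1(h)\, d\sigma(h)=0 \qquad \text{for all } r>0.
\end{equation*}
This I would verify by a direct computation with the standard isotropic moments $\overline{\hat h_i\hat h_j}=\tfrac{1}{3}\delta_{ij}$ and $\overline{\hat h_i\hat h_j\hat h_k\hat h_l}=\tfrac{1}{15}(\delta_{ij}\delta_{kl}+\delta_{ik}\delta_{jl}+\delta_{il}\delta_{jk})$; together with $\mathrm{tr}(S)=0$, the three pieces of $D\Phi_0^{\mathrm{lead}}$ cancel exactly. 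With $g_2=1+R_1$ and $|R_1|\le F\in L^q\cap L^\infty$, split
\begin{equation*}
  \int f g_2 \chi_R\, dh \;=\; \int f_1 \chi_R\, dh + \int_{|h|\ge R_0} (f_1 R_1+f_2 g_2)\chi_R\, dh.
\end{equation*}
The first term vanishes identically for every $R$ by Fubini and the radiality of $\chi_R$. For the second, $|f_1 R_1|\le C|h|^{-3}F(h)$ is in $L^1(\{|h|\ge R_0\})$ by H\"older's inequality with the $L^q$-norm of $F$, and $|f_2 g_2|\le C(1+\|F\|_\infty)\min(1,|h|^{-\alpha})$ is integrable by elementary estimates; dominated convergence then gives the limit $\mu_2 S{:}S=\tfrac12\int_{|h|\ge R_0}(f_1 R_1+f_2 g_2)\,dh$.

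\emph{Step 4: uniform bound, symmetry, and main obstacle.} The functions $f,f_1,f_2$ depend only on the two-ball Stokes problem and on $S$, not on $\phi$; the constants in $|R_1|\le F$ are $\phi$-independent by \eqref{H2}; hence $|\mu_2 S{:}S|\le C|S|^2$ uniformly in $\phi$. That $\mu_2\in\mathrm{Sym}(\mathrm{Sym}_{3,\sigma}(\R))$ is automatic: $\Psi_{x,y}$ depends linearly on $S$ by linearity of Stokes, so $S\mapsto\mu_2 S{:}S$ is a well-defined quadratic form, and every such form determines a symmetric operator by polarisation. The main obstacle is Step~3: identifying the precise form of the leading term $f_1$ and proving its spherical mean vanishes. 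This is the cancellation that makes the Batchelor--Green integral converge without an explicit renormalisation --- without it, the integrand $f(h)\sim |h|^{-3}$ combined with the radial weight $|B_R\cap B(-h,R)|$ would produce a logarithmic divergence.
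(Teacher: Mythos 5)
Your proof is correct and follows essentially the same route as the paper: extract the leading far-field part of $\mI_x(\Psi_{x,y},P_{x,y})$, namely $\frac{20\pi}{3}\,\mM(y-x)S:S$ with $\mM$ homogeneous of degree $-3$ and of vanishing spherical mean, plus an $O(|x-y|^{-4})$ remainder, and control the correlation part via \eqref{H2}. The only (harmless) presentational difference is that you reduce to the radial overlap weight $\chi_R$ and kill the leading term exactly for each $R$, whereas the paper interprets that same term as a principal value (via the zero-annulus-average property of Appendix \ref{appendix_CZ}) and identifies its limit by homogeneity and scaling.
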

\begin{theorem} \label{main_thm} {\bf (Derivation of the Batchelor-Green formula)}

\sspace
Under assumptions \eqref{H1} to \eqref{H5}
\begin{equation*}
 \mu_h   = \textrm{Id} + \phi \frac{5}{2} \textrm{Id}  + \phi^2 \mu_2 + O(\phi^{\frac52}) 
 \end{equation*}
with $\mu_2$ defined in \eqref{def_mu2}. 
\end{theorem}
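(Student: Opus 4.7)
The strategy is to make rigorous the formal cluster expansion of Subsection~\ref{subsec_formal}, truncated at the two-body level, and to control the three-body-and-higher remainder. Writing
\begin{equation*}
u_{I_N,S} = Sx + \sum_{k\in I_N} \Phi_{\{k\}} + \sum_{\{k,l\}\subset I_N,\ k\neq l} \Psi_{k,l} + R_N
\end{equation*}
and plugging this into Proposition~\ref{approx_effective_viscosity}, the zeroth-order piece vanishes by~\eqref{property_Ii}; the singleton piece produces $\tfrac{5}{2}\phi\,|S|^2$ via \eqref{stress_tensor_Phi_0_bis} combined with $\E\,\#I_N/|B(0,N^{1/3})| \to \rho_1 = \tfrac{3}{4\pi}\phi$. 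For the pair piece, property~\eqref{property_Ii} restricts the outer sum to $i\in\{k,l\}$; rewriting $\E \sum_{i\neq k\in I_N}\mI_i(\Psi_{i,k},P_{i,k})$ as an integral against $\rho_2 = \phi^2 g_2\,dx\,dy$ and invoking Proposition~\ref{main_prop} yields $\phi^2\mu_2 S:S$.

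The heart of the proof is to show that, denoting by $\Pi_N$ the associated remainder pressure,
\begin{equation*}
\E\,\frac{1}{2|B(0,N^{1/3})|}\Bigl|\sum_{i\in I_N}\mI_i(R_N,\Pi_N)\Bigr| \;=\; O(\phi^{5/2}).
\end{equation*}
Introducing the higher-order clusters $\Psi_J := \sum_{J'\subset J}(-1)^{\#J-\#J'}\Phi_{J'}$ (so that $\Psi_{\{k,l\}}$ coincides with \eqref{def_Psi_kl}), one has $R_N = \sum_{J\subset I_N,\,\#J\geq 3}\Psi_J$. I would estimate the $k$-particle contribution, for each $k\geq 3$, via two independent ingredients. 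First, a \emph{configurational decay} estimate: a Stokes energy argument, testing $\Psi_J$ against cutoff fields localized on successive subsets of $J$, should give bounds gaining one Stokeslet-like decay factor per particle beyond two, while the short-range singular part is uniformly controlled by the non-penetration assumption~\eqref{H1}. Second, a \emph{decorrelation} step: hypotheses~\eqref{H3}--\eqref{H5} are precisely designed so that $g_k$ factorizes as a product of lower-order correlations plus an $L^q\cap L^\infty$ error controlled by $F$. Combining the two, the $k$-particle integral $\phi^k \int \mI_{x_1}(\Psi_{\{x_1,\ldots,x_k\}},\cdot)\,g_k(0,x_2-x_1,\ldots)\,dx_2\cdots dx_k$ is $O(\phi^k)\subset O(\phi^{5/2})$ for $k\geq 3$.

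The principal obstacle is the configurational decay estimate: without the strong separation~\eqref{Hstrong}, the method of reflections of~\cite{GVH,GVM} is not available, so one must replace it by direct energy estimates. The plan is to prove recursively that each $\Psi_J$ solves a Stokes problem whose data on the outermost particle of $J$ has size measured by a Stokeslet emanating from the other particles of $J$; iterating yields the decay gain. A secondary technical point is the boundary contribution on $\pa B(0,N^{1/3})$ arising when truncating the infinite cluster series. This, together with a possible logarithmic loss in the Stokeslet convolution against $F$, is what I would expect to produce the exponent $5/2$ rather than the naive $3$ suggested by the three-body integral alone. Once the configurational decay and the decorrelation estimates are in place, summing over $k\geq 3$ and matching with the pair term of Proposition~\ref{main_prop} completes the Batchelor-Green formula.
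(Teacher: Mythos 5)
There is a genuine gap, and it lies exactly where you locate your ``principal obstacle.'' Your plan is to push the cluster expansion to all orders, writing $R_N=\sum_{J\subset I_N,\,\#J\ge 3}\Psi_J$, and to control each $k$-particle cluster by a recursive energy estimate. Two things break. First, under the weak separation \eqref{H1} alone (as opposed to \eqref{Hstrong}), no configurational decay estimate for $\Psi_J$ with $\#J\ge 3$ is available: particles in $J$ may all be at distance $O(1)$ from each other, the method of reflections does not converge, and the recursive ``outermost particle'' scheme you sketch has no starting point. Even the two-body analysis in the paper requires two separate regimes (Propositions \ref{prop_large} and \ref{prop_short}), and extending these to three or more balls is precisely what is flagged as difficult. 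Second, and fatally for the stated hypotheses: integrating the $k$-particle contribution against $\rho_k=\phi^k g_k$ for every $k\ge 3$ requires quantitative control of \emph{all} correlation functions, whereas the theorem only assumes \eqref{H2}--\eqref{H5}, i.e.\ control of $g_k$ for $k\le 5$; the $g_k$ for $k\ge 6$ are arbitrary, so your sum over $k\ge 3$ cannot be estimated. Your attribution of the exponent $\tfrac52$ to boundary effects and logarithmic losses is also not the actual mechanism.

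The correct route avoids higher-order clusters entirely. One \emph{defines} $u_{N,err}$ as the exact difference between $u_{I_N,S}$ and the two-body truncation, and observes that its symmetric gradient restricted to each ball is explicit, \eqref{trace_u_err}: it is a sum of two-sphere interaction terms $D\Psi_{\{k,l\}}$ only. This makes $\E\int_{\cup B_i}|D u_{N,err}|^2$ computable in terms of $\rho_3,\rho_4,\rho_5$ alone — which is exactly why the hypotheses stop at $g_5$ — and a careful Calderon--Zygmund analysis gives the bound $O(\phi^3 N)$ (Proposition \ref{prop2}), alongside the bound $O(\phi^2 N)$ for $D(\Phi_{I_N}-\sum_k\Phi_{\{k\}})$ (Proposition \ref{prop3}). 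The contribution of the remainder to the effective viscosity is then rewritten, via integration by parts and the minimizing property of Stokes solutions, as an energy pairing between these two fields on $\cup B_i$; Cauchy--Schwarz between the $O(\phi^3)$ and $O(\phi^2)$ bounds is what produces $O(\phi^{5/2})$. You would need to replace your all-orders expansion and the unproven configurational decay by this truncation-plus-duality argument for the proof to close.
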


\begin{remark}
Let us comment on Assumptions \eqref{H1} to \eqref{H5}. Assumption \eqref{H1}, slightly stronger than the non-penetration condition between the rigid spheres, relaxes a lot the  minimal distance assumption \eqref{Hstrong} under which previous studies were carried.

\mspace
As regards \eqref{H2} to \eqref{H5},  they contain two main pieces of information on the point process. The first one is on the amplitude of the correlation functions. As the function $F$ in our assumptions is bounded and independent of $\phi$, it follows that $g_k = O(1)$, or $\rho_k = O(\phi^k)$, consistently with the assumption made in \cite{BG}. Let us stress  again that the functions $g_k$ may depend on $\phi$, although such dependence is omitted in the notations. Our $L^\infty$ bound on $F$ also entails an $L^\infty$ control on quantities of the form $\frac{g_k(x_1,\dots,x_k)}{g_{k-1}(x_1,\dots,x_{k-1})}$, which is an extra constraint in the vicinity of points where $g_{k-1}$ vanishes. This assumption is made to avoid technicalities, but could be greatly relaxed. 

\mspace
The second piece of information is on the decay of correlations at infinity, through the decay of the function $F$. To understand it better, one can consider the important case of decorrelation at large distances, implying that there exists $R > 0$ such that for all $1 \le j \le k$ and all $x_1, \dots, x_k$, 
$$ g_k(x_1,\dots, x_k) = g_{j}(x_1,\dots,x_j) g_{k-j}(x_{j+1}, \dots, x_k) \quad \text{if }   \: \text{dist}\left(\{x_1, \dots, x_j \}, \{x_{j+1}, \dots, x_k\}\right) \ge R. $$
We claim that in such case, the identities in  \eqref{H2}-\eqref{H5} are satisfied with remainders $R_1$, resp. $R_2$, resp. $R_3, R_4$,  that vanish for $y$, resp. $y-z$, resp. $z-z'$ large enough. For instance, the identity in \eqref{H2} clearly holds with $R_1(y) = 0$ for $|y| \ge R$. Similarly, the identity in \eqref{H3} holds with $R_3(y,z) = 0$ for $|y-z| \ge 2R$. Indeed, it is enough to distinguish between two cases: 
\begin{itemize}
\item either $|z| \le R$, then $\text{dist}\left(y, \{0,z\}\right) \ge R$, which implies 
$$g_3(0,y,z) = g_2(0,z) g_1(y) =  g_2(0,z) g_2(0,y)$$
\item or $|z| \ge R$, then $\text{dist}\left(z, \{0,y\}\right) \ge R$, which implies again
$$g_3(0,y,z) = g_2(0,y) g_1(z) = g_2(0,y) g_2(0,z).$$  
\end{itemize}
Similar but lengthier reasoning allows to treat \eqref{H4} and \eqref{H5} as well. 

\mspace
Hence, our condition is in the spirit of other quantitative notions of decay of correlations, such as $\alpha$-mixing or $\Phi$-mixing, {\it cf.} \cite{Bradley}. Still,  direct comparison to these more classical notions is uneasy. Let us insist though that our hypotheses on the correlation functions $g_k$ are only required for $0 \le k \le 5$: one may consider arbitrary $6$-point correlations and more,  while $\alpha$-mixing or $\Phi$-mixing conditions apply to all correlations.

\mspace
Eventually, let us point out that  our hypotheses apply to classical hardcore Poisson processes, sometimes called Mat\'ern processes \cite{Bart}. They are obtained from usual Poisson point processes by various thinning procedures, in order to satisfy the hardcore condition \eqref{H1}. The simplest example is the  so-called Mat\'ern process of type 1 :  one deletes from the Poisson process all points $x_i$  that are not isolated in $B(x_i, R_0)$. The resulting process obeys all our assumptions. Indeed,  it satisfies \eqref{H1} by construction. As it has finite range of correlations,$R_1$ is zero for $|y|$ large, $R_2$ zero for $|y-z|$ large, {\it etc}.  It then remains to show that the $R_i$'s are bounded in $L^\infty$ uniformly in $\phi$, which follows from the boundedness of the $g_k$'s and the boundedness of the ratios 
$\frac{g_k(x_1,\dots,x_{k-1,x_k})}{g_{k-1}(x_1,\dots,x_{k-1})}$. The first property comes from the fact that $\rho_k \le \phi_P^k$, where $\phi_P$ is the intensity of the initial Poisson process, which scales like $C \phi$ for small $\phi$, see \cite{Bart}. The second one comes from the fact that $\rho_k(x_1, \dots, x_k) \le \rho_{k-1}(x_1, \dots, x_{k-1}) \phi_P$.
\end{remark}

\begin{remark}
Short before completion of this work, we got aware of the very nice preprint by Duerinckx and Gloria \cite{DueGloria}, in which they perform an extensive study of the effective viscosity of random suspensions. This study contains two parts. The first one is dedicated to two somehow extreme examples of point processes : those obtained by random deletion (so that $\rho_k = \phi^k g_k$ for all $k \in \N$, with $g_k$ independent of $\phi$), and those satisfying a strong separation assumption of type \eqref{Hstrong}. It notably shows analyticity in $\phi$ of the effective viscosity for the first example. The second part of the paper culminates in  Theorem 7, showing the  existence of an expansion in $\phi$ of the effective viscosity tensor at arbitrary order,  for a large class of point processes. It validates as a special case both the Einstein's formula and Batchelor and Green's correction. The key assumption there is that the approximations  $\mu_{h,L}$ of $\mu_h$  obtained by periodization over a grid of size $L$ converges to $\mu_h$ with explicit rate $L^{-\gamma}$ for some $\gamma > 0$.  It is shown that such assumption is satisfied under an $\alpha$-mixing condition on the point process, with $\alpha(t) = t^{-a}$, $a > 0$. 

\mspace
Although the analysis in \cite{DueGloria} goes much further than the one presented here, we feel that the latter may be of independent interest, both by its results and by its method. First, our setting is not covered by  the random deletion case, as $g_k$ is allowed to depend on $\phi$ (and is even fully arbitrary for  $k \ge 6$). It is nor covered by  \cite[Theorem 7]{DueGloria}:  the kind of mixing conditions that we use, only put on the first correlation functions of the process,  are not the same as the $\alpha$-mixing used there, see the discussion in the previous remark.  Furthermore, the general bounds given in \cite[Theorem 7]{DueGloria} would only imply in our context $\mu_2 = O(|\log \phi|)$, to be compared to the optimal bound $\mu_2 = O(1)$ in Proposition \ref{main_prop}. The reason behind the logarithm loss in \cite{DueGlo},  is that the  bounds obtained there on the periodic approximations of size $L$ have a slight divergence with $L$. The assumption of quantitative convergence of $\mu_{h, L}$ to $\mu_h$ is then used to circumvent this problem, by appropriately linking $L$ to $\phi$, or in our context linking $N$ to $\phi$, as there is  a parallel between $L$ and $N^{\frac13}$. On the contrary, the bounds shown in the present paper   are  uniform in $N$. To derive these bounds, we must control uniformly in $N$ various integrals taken over balls of radius $N^{1/3}$, similar to the one at the right-hand side of \eqref{def_mu2}. The problem is that the corresponding  integrands are only borderline integrable on the whole space. Still, identifying Calderon-Zygmund kernels within these integrands, we are able to show uniform boundedness using several times the $L^p$ continuity of Calderon-Zygmund operators. The derivation of an estimate with a slight loss in $N$ (namely a power of $(\ln N)$) would not require so much care and would be much simpler.

\mspace
Eventually, the proof presented here turns out to be very different and much more elementary than the one in \cite{DueGloria}, which relies on deep results in quantitative homogenization obtained over the last years (see for instance the Appendix A, showing that $\alpha$-mixing implies algebraic convergence of periodic approximations of the effective viscosity tensor). It has to be said however that  implementation of our proof to arbitrary high order approximations may be hard. For instance, to understand the right analogue of Propositions  \ref{prop_large} and Proposition \ref{prop_short} for more than two balls may be difficult. 
\end{remark}

\section{Proofs}
This long section is devoted to the proof of our results.  The main difficulty is in establishing Theorem \ref{main_thm}, that is a $o(\phi^2)$ approximation of the effective viscosity. It will be deduced from a $o(\phi^2)$ approximation of $u_{I_N,S}$ solution of \eqref{stokes} with $I = I_N = \{ i, B_i \subset B(0,N^{\frac13})\}$.  This approximation $u_{N,app}$ will be built using the cluster expansion introduced at a formal level in the previous section. Then, roughly, we will show that  
$$ \frac{1}{N}\E\|D(u_{I_N,S} -u_{N,app})\|^2_{L^2(\R^3)} \lesssim \frac{1}{N} \E\|D(u_{I_N,S} -u_{N,app})\|^2_{L^2(\cup B_i)} $$
and  prove that the latter quantity is $o(\phi^2)$. As   $u_{I_N,S}$ and $u_{N,app}$ are fully explicit at the boundary, this latter term is fully explicit : it is composed of integrals similar to \eqref{def_mu2} involving solutions of one-sphere and two-spheres Stokes problems, as well as the correlation functions $g_k$ of the point process, $2 \le k \le 5$. The whole point is to show that these integrals (taken over products of the ball $B(0, N^\frac13)$ are uniformly bounded in $N$. One main difficulty is that the integrands are only borderline integrable over the whole space, so that possible logarithmic divergence in $N$ may occur. To obtain a uniform control, one must study very carefully the structure of the integrands. This is the purpose of the preliminary paragraphs \ref{prelim1} and \ref{prelim2}. Broadly, the goal is  to decompose the correlation functions and the solution of the two-sphere problems into more elementary tensorized quantities,  in which  the variables are partly separated. For instance,  as we will show in paragraph \ref{prelim2}, the solution of a two-sphere Stokes problem may be expressed in terms of solutions of one-sphere Stokes problems in some asymptotic regimes. The general idea is that,  using a decomposition in terms of these solutions, which involve naturally Calderon-Zygmund kernels, we will  overcome the problem of borderline integrability, for instance  through the use of a Calderon-Zygmund type theorem. The decay of correlations given in assumptions \eqref{H2}-\eqref{H5} will also be there to provide a little gain for integrability.

\subsection{Reformulation of the assumptions} \label{prelim1}
For some  of our arguments,  we will need to go beyond the direct formulation  \eqref{H3}-\eqref{H5}, and use the following refined formulas: 
\begin{proposition}
Under the assumptions \eqref{H2}-\eqref{H5}, one can write 
\begin{align}  
\label{H3'} \tag{H3'}
g_3(0,y,z) & =   g_2(0,y) g_2(0,z)  + (g_2(y,z) - 1) g_2(0,z)  + \tilde{R}_2(y,z), \\
\nonumber
|\tilde{R}_2(y,z)| \:  & \le \:  F\big(y-z\big) F\big(y\big) \\
\nonumber 
 & \\
\label{H4'} \tag{H4'}
g_4(0,y,z,z') & =   \frac{g_3(0,y,z) g_3(0,y,z')}{g_2(0,y)} +  \frac{g_3(0,z,z') g_3(y,z,z')}{g_2(z,z')} \\
\nonumber
& - g_2(0,z) g_2(y,z) g_2(0,z') g_2(y,z')  +  \tilde{R}_3(y,z,z'),  \\
\nonumber
|\tilde{R}_3(y,z,z')|  \: & \le \:  F(y)  F\big(z-z'\big), \\
 \nonumber 
 & \\
\tag{H5'} \label{H5'} 
g_5(0,y,y',z,z') &  = \frac{g_4(0,y,y',z) g_4(0,y,y',z')}{g_3(0,y,y')} +  \frac{g_4(0,y,z,z') g_4(0,y',z,z')}{g_3(0,z,z')}  \\
\nonumber
& - \frac{g_3(0,y,z)  g_3(0,y',z)  g_3(0,y,z')  g_3(0,y',z')}{g_2(0,y) g_2(0,y') g_2(0,z) g_2(0,z')} + \tilde{R}_4(y,y',z,z') \\ 
\nonumber
|{\tilde R}_4(y,y',z,z')| & \le F(y-y')  F(z-z')
\end{align}
\end{proposition}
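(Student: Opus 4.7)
My plan is to exploit the symmetry of $\rho_k$ together with the stationarity of the process: since $\rho_k$ is symmetric, for any $j \ge 2$ one has $g_k(0,x_2,\dots,x_k) = g_k(0,-x_j, x_2 - x_j, \dots, x_k - x_j)$. Applying (H3)--(H5) to such a re-rooted $g_k$ yields an alternative decomposition of the same algebraic shape, but with a different variable controlling the remainder through $F$. The refined formulas (H3')--(H5') will be obtained by comparing and combining these alternative representations.

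\textbf{Proof of (H3').} Applying (H3) to $g_3(0,y,z)$ at its original base point, and then again after re-rooting at $z$ (i.e.\ using $g_3(0,y,z) = g_3(0,-z,y-z)$), gives
\begin{align*}
g_3(0,y,z) & = g_2(0,y) g_2(0,z) + g_2(0,y) R_2(y,z), & |R_2(y,z)| & \le F(y-z), \\
g_3(0,y,z) & = g_2(0,z) g_2(y,z) + g_2(0,z) \widehat R_2(y,z), & |\widehat R_2(y,z)| & \le F(y).
\end{align*}
Substituting each expression separately into $\tilde R_2(y,z) := g_3(0,y,z) - g_2(0,y) g_2(0,z) - (g_2(y,z)-1) g_2(0,z)$ yields the two identities
\begin{align*}
\tilde R_2(y,z) & = g_2(0,y) R_2(y,z) - g_2(0,z) R_1(y-z), \\
\tilde R_2(y,z) & = g_2(0,z) \big[\widehat R_2(y,z) - R_1(y)\big],
\end{align*}
which imply $|\tilde R_2(y,z)| \le C F(y-z)$ and $|\tilde R_2(y,z)| \le C F(y)$ respectively, using the uniform boundedness of the $g_k$'s. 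Combining via $\min(a,b) \le \sqrt{ab}$ for $a,b \ge 0$ and absorbing the constant into a redefined $F$ (a constant multiple of $\sqrt{F}$, which preserves $L^\infty$, $L^q$ and vanishing at infinity) produces the product bound $|\tilde R_2(y,z)| \le F(y) F(y-z)$.

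\textbf{Proofs of (H4') and (H5').} The same method applies, with richer inclusion-exclusion bookkeeping. For (H4'), one uses (H4) first with the nested quotient structure involving the pair $\{0,y\}$, and then again after re-rooting through the pair $\{z,z'\}$; each rooting yields a remainder controlled by $F(y)$ or $F(z-z')$ alone. The three explicit terms on the right-hand side of (H4') form an inclusion-exclusion decomposition: the two fractional terms encode the two natural rootings, and the subtracted term $g_2(0,z) g_2(y,z) g_2(0,z') g_2(y,z')$ removes their common leading contribution. Substituting into the definition of $\tilde R_3$ and combining by the $\min\le\sqrt{\cdot}$ trick gives $|\tilde R_3| \le F(y) F(z-z')$. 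The derivation of (H5') follows the same pattern, now with the two pair-rootings $\{0,y,y'\}$ and $\{0,z,z'\}$ playing the roles of $\{0,y\}$ and $\{z,z'\}$, and a corresponding inclusion-exclusion third term involving the four factors $g_3(0,y,z), g_3(0,y',z), g_3(0,y,z'), g_3(0,y',z')$.

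\textbf{Main obstacle.} The nontrivial step is the passage from the single-factor bounds ($F(y-z)$ or $F(y)$ alone) to the product bounds $F(y)F(y-z)$ etc. This is possible \emph{only} because the explicit terms in (H3')--(H5') have been chosen so that all ``leading'' contributions from each individual rooting cancel, leaving a remainder that decays in every relevant variable; only then can the interpolation $\min\le\sqrt{\cdot}$ upgrade the one-sided decay into joint decay. The symmetric combinatorics of (H5'), with four rooting points and three inclusion-exclusion terms, make this the most delicate case; one must also verify that the denominators $g_{k-1}$ appearing in the fractions create no difficulty, which follows from the uniform $L^\infty$ control on the ratios $g_k/g_{k-1}$ discussed in the remark after (H5).
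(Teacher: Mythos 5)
Your proposal is correct and follows essentially the same route as the paper: one derives two one-variable bounds on the remainder (coming from the two natural rootings of the symmetric, stationary correlation functions) and combines them through $\min(a,b)\le\sqrt{ab}$, replacing $F$ by $\sqrt{F}$ and $q$ by $2q$. The only caveat is that for \eqref{H4'} and \eqref{H5'} you assert rather than verify the cancellation of the leading contributions --- i.e.\ that the ``other'' fractional term minus the subtracted product term is itself $O(F)$ of the relevant variable, which is exactly the expansion occupying most of the paper's proof of \eqref{H5'} --- but your fully worked \eqref{H3'} case shows the mechanism correctly.
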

\begin{proof}
For brevity, we only prove \eqref{H5'}, which is the most elaborate.  Let ${\tilde R}_4$ as defined by the equality in \eqref{H5'}.  We actually only need to prove the following two inequalities:  
\begin{align*}
|{\tilde R}_4(y,y',z,z')| \le F(z-z')  \quad \text{ uniformly in $y,y'$}, \quad \text{for some $F \in L^q \cap L^\infty$}, \\
|{\tilde R}_4(y,y',z,z')| \le F(y-y')  \quad \text{ uniformly in $z,z'$}, \quad \text{for some $F \in L^q \cap L^\infty$}, 
\end{align*}
Indeed, it follows that $|{\tilde R}_4(y,y',z,z')|  \le \sqrt{F(y-y')}  \sqrt{F(z-z')}$, so that we get the result with $F := \sqrt{F}$, $q := 2 q$.   By symmetry, it is enough to show the first bound. By \eqref{H5}, we  have 
$$  |g_5(0,y,y',z,z') - \frac{g_4(0,y,y',z) g_4(0,y,y',z')}{g_3(0,y,y')}| \le  F(z-z') $$
It remains to handle the second part in ${\tilde R}_4$. By \eqref{H2}-\eqref{H4}, it is clearly bounded, so that the difficulty is only for  $z-z'$ large. In particular, either $z$ is large or $z'$ is large. By symmetry, we can always assume that $z'$ is large, and so by \eqref{H2} that $\rho_2(0,z') \ge \frac{1}{2}$. Pondering on \eqref{H2}-\eqref{H4}, we find 
\begin{align}
& \frac{g_4(0,y,z,z') g_4(0,y',z,z')}{g_3(0,z,z')} - \frac{g_3(0,y,z)  g_3(0,y',z)  g_3(0,y,z')  g_3(0,y',z')}{g_2(0,y) g_2(0,y') g_2(0,z) g_2(0,z')} \\
= \: & \frac{ g_3(0,y,z) \big(  \frac{g_3(0,y,z')}{g_2(0,y)} + R_3(y,z,z') \big) g_3(0,y',z)  \big(\frac{g_3(0,y',z')}{g_2(0,y')} + R_3(y',z,z') \big)}{g_2(0,z) (g_2(0,z') + R_2(z,z'))} \\
 -  \: &  \frac{g_3(0,y,z)  g_3(0,y',z)  g_3(0,y,z')  g_3(0,y',z')}{g_2(0,y) g_2(0,y') g_2(0,z) g_2(0,z')} 
\end{align}
Note that we can always assume $z-z'$ large enough and  $z'$ large enough so that 
$$\frac{R_2(z,z')}{g_2(0,z')} \le \frac{1}{2},  \quad \text{resulting in } \quad \frac{1}{g_2(0,z') + R_2(z,z')} = \frac{1}{g_2(0,z')} (1 + R(z,z')) $$
 with $|R(z,z')| \le F(z-z')$, $F \in L^q \cap L^\infty$. Expanding all terms, we see that the $O(1)$ contributions cancel, and we end up with 
$$ \Big| \frac{g_4(0,y,z,z') g_4(0,y',z,z')}{g_3(0,z,z')} - \frac{g_3(0,y,z)  g_3(0,y',z)  g_3(0,y,z')  g_3(0,y',z')}{g_2(0,y) g_2(0,y') g_2(0,z) g_2(0,z')} \Big| \le F(z-z') $$
for some $F \in L^q \cap L^\infty$, for some finite $q$. Hence, the same bound holds for ${\tilde R}_4$, which concludes the proof. 
\end{proof}

\subsection{Preliminaries about the two-sphere configuration} \label{prelim2}
As mentioned earlier, 
$$ \Phi_y(x) = \Phi_0(x-y) $$
with $\Phi_0$ defined in  \eqref{def_Phi0}. Note that $\Phi_0$ is linear in $S$, with main part  homogeneous of order $-2$. It implies that 
\begin{equation} \label{def_mM0}
D \Phi_0(x-y) = \mM_0(x-y) S
\end{equation}
where $\mM_0 = \mM_0(x) \in  \text{Sym}\left(\text{Sym}_{3,\sigma}(\mathbb{R})\right)$, with  main part homogeneous of order $-3$, of Calderon-Zygmund type (see the discussion in  appendix \ref{appendix_CZ}). 

\mspace
We now establish several properties of the field $\Psi_{y,z}$ introduced in  \eqref{def_Psi_I_xy}. Clearly, the following symmetry properties hold: 
\begin{lemma}  {\bf (Symmetry properties)} \label{lem_sym}
\begin{align*}
\Psi_{y,z}(x) =  \Psi_{z,y}(x) =  \Psi_{z-x,y-x}(0), \quad \Psi_{y,z}(-x) = - \Psi_{-y,-z}(x) 
\end{align*}

\end{lemma}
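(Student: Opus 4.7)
The plan is to derive all three identities from the uniqueness of the one- and two-sphere corrector problems \eqref{corrector_y}, \eqref{corrector_yz}, together with elementary changes of variables applied to $\Phi_I := u_{I,S} - Sx$, which inherits from \eqref{stokes} the decay $\Phi_I(x) \to 0$ as $|x| \to +\infty$ (this decay is what furnishes uniqueness for the corrector systems).

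For the first equality $\Psi_{y,z}(x) = \Psi_{z,y}(x)$, I observe that the system \eqref{corrector_yz} is invariant under the relabelling $y \leftrightarrow z$: all five equations play symmetric roles in the two balls. Uniqueness of the decaying solution then gives $\Phi_{y,z} = \Phi_{z,y}$, and since $\Phi_y+\Phi_z$ is manifestly symmetric, the identity for $\Psi_{y,z}$ follows.

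For the translation identity $\Psi_{y,z}(x) = \Psi_{z-x,y-x}(0)$, the key step is a general fact: $\Phi_I(\cdot + x_0) = \Phi_{I-x_0}(\cdot)$, where $I - x_0 := \{x_i - x_0 : i \in I\}$. I would set $\tilde\Phi(w) := \Phi_I(w+x_0)$, $\tilde P(w) := P_I(w+x_0)$, and check that $(\tilde\Phi,\tilde P)$ solves the corrector problem associated to the balls $B(x_i - x_0, 1)$: every equation of \eqref{corrector_yz} is local and translation-invariant, and $\tilde\Phi$ still decays at infinity. Uniqueness then forces $\tilde\Phi = \Phi_{I-x_0}$. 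Applying this successively to $I = \{y,z\}$, $I = \{y\}$, $I = \{z\}$, with $x_0 = x$, and evaluating at $w=0$ yields $\Psi_{y,z}(x) = \Psi_{y-x,z-x}(0)$; the first equality then converts it into $\Psi_{z-x,y-x}(0)$.

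For the parity identity $\Psi_{y,z}(-x) = -\Psi_{-y,-z}(x)$, I would define $v(x) := -\Phi_I(-x)$, $q(x) := P_I(-x)$ and check that $(v,q)$ solves the corrector problem for $-I := \{-x_i : i\in I\}$. A short computation gives $\nabla v(x) = (\nabla\Phi_I)(-x)$ (hence $Dv + S = 0$ inside $B(-x_i,1)$ and $\operatorname{div} v = 0$ outside), while $\Delta v(x) = -(\Delta\Phi_I)(-x)$ and $\nabla q(x) = -(\nabla P_I)(-x)$ ensure the Stokes equation is preserved; the force/torque conditions transfer because outward normals change sign under $x \mapsto -x$, and $v(x) \to 0$ at infinity. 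Uniqueness gives $v = \Phi_{-I}$, i.e.\ $\Phi_I(-x) = -\Phi_{-I}(x)$. Applied separately to $\{y,z\}$, $\{y\}$, $\{z\}$ and combined by linearity, this yields the claim.

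None of the three steps presents a real obstacle: everything is bookkeeping for the change of variable, and the only substantive input is uniqueness in $\dot H^1(\R^3)$ for the corrector systems, which is standard once the far-field decay inherited from \eqref{stokes} is recorded.
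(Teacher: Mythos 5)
Your proposal is correct, and it is essentially the intended argument: the paper simply asserts the lemma as "clear" without proof, and the justification it has in mind is exactly the one you write out, namely uniqueness of the decaying solution of \eqref{corrector_y}--\eqref{corrector_yz} combined with the relabelling, translation, and reflection changes of variables. All three verifications (in particular the sign bookkeeping for the normal and the stress tensor under $x\mapsto -x$) check out.
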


\mspace
 The next two propositions specify the behaviour of $\Psi_{y,z}$ when the sphere centers $y$ and $z$ are far from one another  and when they are close from one another compared to their distance to $x$.

\begin{proposition}  {\bf (Behaviour for large distance of the spheres centers)} \label{prop_large}

\mspace
There exists $R_1 > R_0$ such that for all $x,y,z$ with  $|y-z| \ge R_1$, $\: |x - y|, |x - z| \ge\frac{2+R_0}{4}$, 
\begin{align*}
 D \Psi_{y,z}(x) = \Big( \mM_0(x-y) + \mM_0(x-z) \Big)  \mM_l(y-z) S    +  R_{y,z}(x) 
\end{align*}
where $\mM_l$ and $R_{y,z}(x)$ are smooth in their arguments, with: 
\begin{align*}
|\mM_l(y-z) - \mM_0(y-z)|  & \le  C |y-z|^{-4},  \\
|R_{y,z}(x)| & \le  C \big(|x-y|^{-4} |y-z|^{-4} + |x-z|^{-4} |y-z|^{-4}\big) 
\end{align*}
\end{proposition}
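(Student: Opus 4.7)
The approach is the method of reflections. By linearity of the one-sphere corrector problem \eqref{corrector_y} in the imposed strain, denote by $\Phi_y[T]$ the solution with $S$ replaced by any $T \in \text{Sym}_{3,\sigma}(\R)$, namely $\Phi_y[T](x) = \Phi_0(x-y)$ with $S$ replaced by $T$ in \eqref{def_Phi0}. Thus $D\Phi_y[T] = -T$ on $B_y$ and $D\Phi_y[T](x) = \mM_0(x-y)T$ outside $B_y$, by \eqref{def_mM0}. I seek an approximation of $\Phi_{y,z}$ of the form
\[
\Phi^{\mathrm{app}}_{y,z} := \Phi_y[S] + \Phi_z[S] + \Phi_y[T_y] + \Phi_z[T_z],
\]
with $T_y,T_z \in \text{Sym}_{3,\sigma}(\R)$ to be chosen so that the rigidity condition $D\Phi^{\mathrm{app}}_{y,z} + S = 0$ on $B_y \cup B_z$ holds at leading order.

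A direct computation on $B_y$ gives $D\Phi^{\mathrm{app}}_{y,z}(x) + S = -T_y + \mM_0(x-z)(S + T_z)$. Freezing $\mM_0$ at $x=y$---an approximation exact up to $O(|y-z|^{-4})$ on $B_y$ by Taylor expansion, since $|\nabla \mM_0(y-z)| \lesssim |y-z|^{-4}$---yields the linear system $T_y = \mM_0(y-z)(S+T_z)$, $T_z = \mM_0(z-y)(S+T_y)$. Since $\Phi_0$ is odd, $\mM_0$ is even and the system is invariant under $y \leftrightarrow z$, forcing $T_y=T_z$; the remaining equation $T_y = \mM_0(y-z)(S+T_y)$ is solved by $T_y = T_z = \mM_l(y-z) S$ with
\[
\mM_l(y-z) := (\mathrm{Id} - \mM_0(y-z))^{-1}\mM_0(y-z),
\]
well-defined for $|y-z| \ge R_1$ large enough that $\|\mM_0(y-z)\| \le C|y-z|^{-3} < \tfrac12$. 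The Neumann series gives $\mM_l - \mM_0 = (\mathrm{Id}-\mM_0)^{-1}\mM_0^2 = O(|y-z|^{-6})$, well within the claimed $O(|y-z|^{-4})$.

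For the error $E := \Phi_{y,z} - \Phi^{\mathrm{app}}_{y,z}$, first verify that $\Phi^{\mathrm{app}}_{y,z}$ automatically satisfies the zero-force and zero-torque conditions on each ball: each one-sphere summand does by definition, while the cross evaluations of $\Phi_z[\cdot]$ on $B_y$ are smooth Stokes flows in a neighborhood of $B_y$, so their force and torque vanish by the divergence theorem. Hence $E$ solves the two-sphere problem with inhomogeneity only the residual strain on each ball: on $B_y$,
\[
DE(x) = -\bigl(\mM_0(x-z) - \mM_0(y-z)\bigr)(S+T_z), \quad x \in B_y,
\]
bounded by $C|y-z|^{-4}$ uniformly. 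The key point is that, by construction of $T_y$, the constant-strain component of this residual vanishes (up to a correction of size $O(|y-z|^{-5})$ that may be reabsorbed into the definition of $\mM_l$ without spoiling its $O(|y-z|^{-4})$ proximity to $\mM_0$), so the residual represents a higher-multipole ball-supported datum. The one-sphere Stokes response to such a datum has derivative decaying one order faster than the strain response, namely $|x-y|^{-4}$ instead of $|x-y|^{-3}$. A single further reflection at $B_z$ produces the symmetric $|x-z|^{-4}$ contribution, and geometric convergence of the reflections yields the claimed bound for $R_{y,z} := DE$. Assembling,
\[
D\Psi_{y,z}(x) = D\Phi_y[T_y](x) + D\Phi_z[T_z](x) + DE(x) = \bigl(\mM_0(x-y)+\mM_0(x-z)\bigr)\mM_l(y-z)S + R_{y,z}(x),
\]
with smoothness of $R_{y,z}$ inherited from interior regularity for the Stokes system.

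The main obstacle is the pointwise decay estimate for $DE$: standard energy estimates for the two-sphere Stokes problem yield only an $L^2$ bound on $DE$ in the exterior, not the pointwise decay $(|x-y|^{-4}+|x-z|^{-4})|y-z|^{-4}$. Upgrading to the stated bound requires explicit exploitation of the higher-multipole structure of the residual, either through an explicit multipole expansion of the one-sphere Stokes response to ball-supported data, or via sharp Green's function estimates, together with careful bookkeeping of the iterated reflections so that the multipole order of the residual is preserved at each step.
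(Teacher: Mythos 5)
Your overall strategy is the same as the paper's: the method of reflections, with the leading behaviour of $\Psi_{y,z}$ extracted as a ``dipole response'' $\mM_0(x-y)\,\mM_l(y-z)S$ plus a faster-decaying remainder. The organization differs slightly and pleasantly: you determine the induced strains $T_y,T_z$ by a self-consistent fixed point, yielding the closed form $\mM_l=(\mathrm{Id}-\mM_0)^{-1}\mM_0$, whereas the paper keeps the full reflection series $\Phi_{y,z}=Q_y[S\cdot]+Q_z[S\cdot]+Q_yQ_z[S\cdot]+\dots$ and resums its tail before extracting the mean strain on each ball. Your algebra for the fixed point, the parity argument forcing $T_y=T_z$, the verification of the force/torque conditions for $\Phi^{\mathrm{app}}_{y,z}$, and the bookkeeping of the $O(|y-z|^{-5})$ correction from freezing $\mM_0$ at the center rather than averaging over $B_y$ are all sound.

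However, as you yourself acknowledge in your last paragraph, the proof is not complete: the decisive ingredient is missing. Everything hinges on the following quantitative one-sphere estimate: if $w$ is a Stokes field with $\dashint_{B_y}Dw=0$, then the solution $u=Q_y[w]$ of the one-sphere problem with datum $-Dw$ on $B_y$ satisfies $|DQ_y[w](x)|\le C|x-y|^{-4}\|Dw\|_{L^2(B_y)}$ --- i.e.\ the response to a mean-zero (higher-multipole) strain datum gains one order of decay over the dipole response, \emph{with a constant controlled by the $L^2$ norm of the datum alone}. This is exactly \cite[Lemmas 4.3 and 4.4]{Hoferter}, which the paper imports, together with the geometric convergence of the reflection iteration for $|y-z|\ge R_1$ (also from \cite{Hoferter}); neither is supplied in your argument. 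The second point is not merely routine bookkeeping: after one reflection the datum on the other ball is no longer mean-zero, so at each step one must re-split it into its mean (which feeds the dipole term and must be absorbed into $\mM_l$) and its fluctuation (which gives the $|x-y|^{-4}$ decay); the paper sidesteps the infinite re-splitting by applying the lemma once to the resummed tail $w=Q_y[S\cdot]+Q_yQ_z[S\cdot]+\dots$, for which it separately shows $\|DQ_z[w]-\dashint_{B_y}DQ_z[w]\|_{L^2(B_y)}\le C|y-z|^{-6}$. Without the quantitative lemma and the convergence of the iteration, the pointwise bound $|R_{y,z}(x)|\le C(|x-y|^{-4}+|x-z|^{-4})|y-z|^{-4}$ --- which is precisely what makes the later Calderon--Zygmund arguments work --- remains unproved.
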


\begin{proof}
This proposition relies on the fact that for $|y-z|$ large enough, the solution $\Phi_{y,z}$ of \eqref{corrector_yz} can be solved thanks to the method of reflections. This method, that goes back to Smoluchowski, was studied in great details in article \cite{Hoferter}. The analysis in \cite{Hoferter} is about systems that differ slightly  from  \eqref{corrector_yz}, resp.   \eqref{stokes} :  the inhomogeneous boundary data on the balls, resp. inhomogeneous boundary condition at infinity, is replaced by an inhomogeneous source term in the Stokes equation. Still, the iteration process described there extends straightforwardly to \eqref{corrector_yz} or  \eqref{stokes}, only the initialization being different. In particular,  as a consequence of this analysis, one can express $\Phi_{y,z}$ as a series. 
\begin{equation} \label{series}
\begin{aligned} 
\Phi_{y,z}  & =  \Big(Q_y[S\cdot] + Q_z[S\cdot]\Big) \:  +  \: \Big(Q_y Q_z[S\cdot] + Q_z Q_y[S\cdot])  \: + \:     \Big(Q_y Q_z Q_y[S\cdot] + Q_z Q_y Q_z[S\cdot]\Big)  \\
& +  \Big(\underbrace{Q_y Q_z \dots}_{\text{$k$ factors}}[S\cdot] +\underbrace{Q_z Q_y \dots}_{\text{$k$ factors}}[S\cdot] \Big) \: + \:  \dots 
\end{aligned}
\end{equation}
where the operator $Q_y$ is defined by : 
for all $y \in \R^3$,  for all $w \in H^1_{\sigma,loc}(\R^3)$, $u  = Q_y[w]$ is the solution in $\dot{H}^1_\sigma(\R^3)$ of 
\begin{equation*} 
\begin{aligned}
-\Delta u  + \na p  & = 0,  \quad  x \in \R^3 \setminus B_y, \\
\div u   & = 0,  \quad  x \in \R^3  \setminus B_y, \\
D(u + w) & = 0,  \quad  x \in B_y, \\
\int_{\pa B_y} \sigma(u, p) n   &  =  \int_{\pa B_y} \sigma(u, p) n \times (x-y) & = 0. \\
\end{aligned}
\end{equation*}
From \cite{Hoferter}, one can infer that the series in \eqref{series} converges in $\dot{H}^1_\sigma(\R^3)$ for $|y-z| \ge R_1$, $R_1$ large enough. Actually, the convergence result in \cite{Hoferter} is much more general, as an arbitrary number of balls is considered. The case of two balls considered here could be handled more directly, but we shall not expand on that. Let us stress that in the region $\{x,\:  |x-y|,  |x-z| \ge  \frac{2+R_0}{4}\}$, the partial sums of the series are solutions of the homogeneous Stokes equation. By classical elliptic regularity estimates, one can deduce that  the convergence holds uniformly in this region for all derivatives   of order $\ge 1$. 

\mspace
Noticing that  $Q_{y}[S\cdot] = \Phi_y$, we end up with 
\begin{align*} 
\Psi_{y,z}(x)   = & + Q_y Q_z [S\cdot + Q_y[S\cdot] + Q_y Q_z[S\cdot] + \dots]   \\
& + Q_z Q_y [S\cdot + Q_z[S\cdot] + Q_z Q_y[S\cdot] + \dots] =  \Psi^1_{y,z}(x) + \Psi^2_{y,z}(x) 
\end{align*}
Clearly, to prove the proposition, it is enough to show that  
$$  D \Psi^1_{y,z}(x)   = \mM_0(x-y)  \mM_l^1(y-z) S  +  R^1_{y,z}(x) $$
where $\mM_l^1$ and $R^1_{y,z}(x)$ have the same behaviour as $\mM_l$ and $R_{y,z}(x)$  in the proposition. Indeed, as $\Psi^2_{y,z} = \Psi^1_{z,y}$, we get 
$$ D \Psi_{y,z}(x)   = \mM_0(x-y)  \mM_l^1(y-z) S   \mM_0(x-z)  \mM_l^1(z-y) S  +   R^1_{y,z}(x)  + R^1_{z,y}(x) $$
Moreover, using that $ D \Psi_{y,z}(x) = D(\Psi_{-y,-z})(-x)$, {\it cf.} Lemma \ref{lem_sym} and the parity of $\mM_0$, we obtain the appropriate formula, with 
\begin{align*}
\mM_l := \frac{1}{2} \left(  \mM^1_l +   \mM^1_l(-\cdot) \right), \quad  R_{y,z}(x) = \frac{1}{2}  \left(  R^1_{y,z}(x) + R^1_{z,y}(x) + R^1_{-y,-z}(-x) +  R^1_{-z,-y}(-x) \right) 
\end{align*}

\mspace
To do so, we shall rely on the following properties:
\begin{lemma} \label{properties_Q} {\bf (\cite[Lemmas 4.3 and 4.4]{Hoferter})}
For all $y \in \R^3$, for all $|x-y| > 1$, for all $S \in \text{Sym}_{3,\sigma}(\mathbb{R})$, 
$$D Q_y[S\cdot](x) = \mM_0(x-y)S.$$
For all $y \in \R^3$, for all $|x-y| > 1$, for all $w \in \dot{H}^1_{\sigma,loc}(\R^3)$ with  $\dashint_{B_y} D w = 0$,
$$|D Q_y[w](x)| \le \frac{C}{|x-y|^4} ||D w||_{L^2(B_y)}.$$ 
Hence, for any $w$, 
\begin{equation} \label{ineq_lemma_properties_Q}
D Q_y[w](x) = \mM_0(x-y) \Big( \dashint_{B_y} D w \Big)  + O\Big(|x-y|^{-4} ||D w -  \dashint_{B_y} D w||_{L^2(B_y)}\Big).
\end{equation}
\end{lemma}

\mspace
 Let 
$$w :=  Q_y[S\cdot] + Q_y Q_z[S\cdot] + \dots$$
 so that $\Psi^1_{y,z} =  Q_y Q_z[S\cdot]   Q_y Q_z[w]$. Clearly, by the first property in Lemma  \ref{properties_Q}, 
 \begin{align*}
\forall \tilde x \in B_y, \quad D Q_z[S \cdot](\tilde x) & =   \mM_0(\tilde x-z) S  \\
&  = \dashint_{B_y}  \mM_0(x'-z) S dx'   \: + \:  \Big(  \mM_0(\tilde x-z) S - \dashint_{B_y}  \mM_0(x'-z) S dx'  \Big)  
\end{align*}
where the last term has zero mean and is bounded by $C/|y-z|^4$. By Lemma \ref{properties_Q}, 
\begin{align*} 
D Q_y Q_z[S\cdot](x) &  = \mM_0(x-y) \dashint_{B_y}  \mM_0(x'-z) S dx'  +  O(|x-y|^{-4} |y-z|^{-4} ) \\
& = \mM_0(x-y) \mM_l^a(y-z) +  O(|x-y|^{-4} |y-z|^{-4} ) 
\end{align*}
where $\mM_l^a := \dashint_{B_y}  \mM_0(x'-z) S dx'$ satisfies   
$$\mM_l^a(y-z) = \mM_0(y-z) S + O(|y-z|^{-4}).$$ 
For the other part of $\Psi^1_{y,z}$, we remark that by translational invariance, $Q_z[w](x')$ can be seen as a function of  $x'-y$ and $y-z$ only: $Q_z[w](x') = W(x'-y, y-z)$. It follows that its average on the ball $B_y$ can be seen as a function of $y-z$. Using the linearity in $S$, we end up with an expression of the form 
$$\dashint_{B_y}  D Q_z[w](x') dx' =: \mM_l^b(y-z)S. $$
Also, as $w$ is of the form $Q_y[\tilde w]$, it satisfies for all $\tilde x \in B_z$, 
$$ |D w(\tilde x)| \le C/|y -z|^3. $$
we deduce that for all  $\tilde x \in B_y$, 
$$ |D Q_z[w] (\tilde x)| \le   C/|y -z|^6,  $$
and in particular, 
$$ ||D Q_z[w]  -  \dashint_{B_y} D Q_z[w] ||_{L^2(B_y)}\Big)\le  C/|y -z|^6.  $$
Thanks to this last inequality and inequality  \eqref{ineq_lemma_properties_Q} in Lemma  \ref{properties_Q} (replacing $w$ by $Q_z w$), we end up with 
$$ D Q_y Q_z[w](x) =  \mM_0(x-y)  \mM_l^b(y-z)S + O(|x-y|^{-4} |y -z|^{-6}). $$
Setting $\mM^1_l := \mM_l^a + \mM_l^b$ concludes the proof. 
\end{proof}

\begin{proposition} {\bf (behaviour for short distance of the spheres centers)} \label{prop_short}

\mspace
There exists $R_2 > 2R_1 + R_0$ such that for all $x,y,z$ with  $\frac{2+R_0}{4} \le |y-z| \le 2 R_1$, $|y - x| \ge R_2$, 
\begin{align*}
 D \Psi_{y,z}(x) =  \Big(\mM_0(x-y) + \mM_0(x-z)\Big)  \mathcal{M}_s(y-z) S +  R_{y,z}(x) 
\end{align*}
where $\mM_s$ and $R_{y,z}(x)$ are smooth in their arguments, with 
\begin{align*} 
R_{y,z}(x) & = O(|x-y|^{-4}), \quad   |x-y|   \rightarrow +\infty 
\end{align*}
\end{proposition}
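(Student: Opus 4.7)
The plan is to derive the expansion from a far-field multipole analysis of $\Psi_{y,z}$ via the Green representation formula for the exterior Stokes problem. The key preliminary I would establish is that $\Psi_{y,z}$ has \emph{zero total force and zero total torque on each of $\pa B_y$ and $\pa B_z$ separately}. Indeed, $\Phi_{y,z}$ has vanishing force and torque on each ball by \eqref{corrector_yz}, and $\Phi_y$ has them on $\pa B_y$ by \eqref{corrector_y}; it also has them on $\pa B_z$ because $\Phi_y$ is a smooth Stokes solution on the disjoint ball $B_z$, so integration by parts gives $\int_{\pa B_z} \sigma(\Phi_y,P_y) n = \int_{B_z} \div \sigma(\Phi_y,P_y) = 0$, and the symmetry of $\sigma$ combined with the antisymmetry of the cross product yields the analogous torque identity. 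A symmetric argument handles $\Phi_z$ on $\pa B_y$, and subtracting gives the claim for $\Psi_{y,z}$.

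For $|y-z|$ in the compact range bounded away from the tangency value $2$, the two-ball Stokes system \eqref{corrector_yz} is uniformly well-posed, so standard elliptic regularity delivers uniform $C^k$ bounds on $\Psi_{y,z}$ in a neighborhood of $\pa(B_y \cup B_z)$. Choosing $R_2$ large enough, the Green representation for the exterior Stokes problem reads, for $|x-y| \ge R_2$,
$$\Psi_{y,z}(x) = \int_{\pa (B_y \cup B_z)} \bigl[G(x-\xi) (\sigma \cdot n)(\xi) - K(x-\xi) (\Psi_{y,z} \otimes n)(\xi) \bigr] d\sigma(\xi),$$
with $G$ the Oseen tensor and $K$ its associated stress kernel. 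I would split the integral over $\pa B_y$ and $\pa B_z$ and Taylor-expand the kernels around $x-y$ (resp.\ $x-z$). The zeroth-order terms vanish by the force and torque identities above. The first-order Taylor terms, of size $|x-y|^{-2}$, provide the leading contribution and have the structure of a stresslet field centered at $y$, resp.\ $z$. Since $\Phi_0$ is \emph{the} stresslet field associated with a single unit rigid sphere subjected to a symmetric trace-free strain, these contributions take the form $\Phi_0(x-y) [T_y(y-z) S]$ and $\Phi_0(x-z) [T_z(y-z) S]$ for smooth linear operators $T_y, T_z$ on $\text{Sym}_{3,\sigma}(\mathbb{R})$. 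Higher-order Taylor terms contribute $O(|x-y|^{-3})$ to $\Psi_{y,z}$, hence $O(|x-y|^{-4})$ to $D\Psi_{y,z}$, uniformly across the compact $|y-z|$ range.

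Applying $D$ and using \eqref{def_mM0} then yields $D\Psi_{y,z}(x) = \mM_0(x-y) T_y(y-z) S + \mM_0(x-z) T_z(y-z) S + R_{y,z}(x)$ with $R_{y,z}(x) = O(|x-y|^{-4})$. To finish, I would identify $T_y = T_z =: \mM_s$ using the symmetries of Lemma~\ref{lem_sym}: the swap $\Psi_{y,z} = \Psi_{z,y}$ forces $T_y(y-z) = T_z(z-y)$, while the reflection $\Psi_{y,z}(-x) = -\Psi_{-y,-z}(x)$ combined with the evenness of $\mM_0$ (itself a consequence of the oddness of $\Phi_0$) gives $T_y(y-z) = T_y(z-y)$, whence $T_y = T_z =: \mM_s$, even in $y-z$. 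The main obstacle I foresee is the clean identification of the first-order multipole term with an $\mM_0$-type kernel: among the three irreducible components of the first moment, the antisymmetric (rotlet) part must drop out thanks to the vanishing torque and the trace (dilation) part by incompressibility, so that only the symmetric trace-free stresslet survives and can be matched with $\Phi_0$. A secondary technical point is to carefully propagate the elliptic constants for the two-ball Stokes problem uniformly in $y,z$ across the compact distance range, which is standard once tangency is excluded.
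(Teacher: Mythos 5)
Your proposal is correct and follows essentially the same strategy as the paper: a far-field multipole expansion of the two-sphere solution in which the Stokeslet monopole is killed by the vanishing net force, while the vanishing torque and incompressibility reduce the first moment to a symmetric trace-free stresslet that is then identified with $\mM_0$. The paper implements this via a cutoff, a Bogovskii corrector and the volume Oseen potential with compactly supported force $f$ (showing $\int f=0$ and that $\int f\otimes y$ is symmetric trace-free) rather than your layer-potential representation, and it absorbs the difference between the two centers into the $O(|x-y|^{-4})$ remainder instead of symmetrizing $T_y=T_z$ via Lemma~\ref{lem_sym}, but these are technical variants of the same argument.
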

\begin{remark}
With our choice of $R_2$, if $|x-y| \ge R_2$ and $|y-z| \le 2 R_1$, then $|x-z| \ge R_0$. 
\end{remark}

\begin{proof}
The proof is an adaptation of arguments given by M\'echerbet in the context of sedimentation problems \cite{MR4127954}. We give the main ideas and refer to \cite{MR4127954} for complementary details. First, by translation, 
rotation and rescaling, we can always assume that $y=0$, $z=2$, and the point is to prove that 
$$ D \Psi_{0,2}(x) = (\mM_0(x) + \mM_0(x-2)) \mN S + O(|x|^{-4}) $$
for some tensor $\mN$.
We recall that $D\Phi_0(x) = \mM_0(x)S$,  $D\Phi_2(x) = \mM_0(x-2)S$. Moreover, clearly, $\mM_0(x-2) = \mM_0(x) + O(|x|^{-4})$.  Hence, it is enough to show that 
$$D \Phi_{0,2}(x) = \mM_0(x)\mN  + O(|x|^{-4}) $$
(see \eqref{corrector_yz} for the definition of $\Phi_{y,z}$).  We denote $u = \Phi_{0,2}$ for brevity, and introduce a smooth $\chi$ such that $\chi = 0$ in $B(0,4)$ (vicinity of the two spheres), and $\chi = 1$ outside $B(0,6)$. As in \cite{MR4127954}, we set 
$$ \overline{u} = \chi u + \tilde u, \quad \overline{p} = \chi p  $$
where $\tilde u$ is a corrector added to recover the divergence-free condition: $\div \tilde u = - u \cdot \na \chi$. By classical considerations involving the Bogovski operator, as $u \cdot \na \chi$ is smooth, compactly supported in $B(0,6) \setminus B(0,4)$,  and linear in $S$,  so can be taken $\tilde u$. 
Also, as $\overline{u}$ is smooth and divergence free on $\R^3$, it can always be written as a solution of an inhomogeneous Stokes equation in $\R^3$: 
$$ -\Delta \overline{u} + \na \overline{p} = f, \quad \div \overline{u} = 0.$$  
Note that $f$ is compactly supported in $B(0,6)$. Introducing the Oseen tensor  
$$O_{s}(x) := \frac{1}{8\pi} \left( \frac{\textrm{Id}}{|x|} + \frac{x \otimes x}{|x^3} \right),$$  
\begin{align*}
 \forall |x| > 6, \quad  u(x) & = \overline{u}(x) =   \int_{B(0,6)} O_s(x-y) f(y) dy \\
 & = O_{s}(x) \int_{B(0,6)} f(y) dy  - \sum_{l=1}^3 \pa_l O_s(x) \int_{B(0,6)} y_l f(y) dy + O(|x|^{-3}) 
 \end{align*}
using a Taylor expansion of the Oseen tensor at $x$. We claim that 
$$ \int_{B(0,6)} f(y) dy = 0 $$
and that 
$$ S_f :=  \int_{B(0,6)} f(y) \otimes y \, dy := \Big(   \int_{B(0,6)}  y_j  f_i(y) dy \big)_{i,j} \text{ is symmetric and trace-free.} $$  
The first point was established in \cite{MR4127954}. Denoting $\Sigma := 2 D u - p \textrm{Id}$, $ \overline{\Sigma}  := 2 D \overline{u} - \overline{p} \textrm{Id}$, we find 
\begin{align*}
\int_{B(0,6)} f(y) dy & =  \int_{B(0,6)} \div \overline{\Sigma}  = \int_{\pa B(0,6)}  \overline{\Sigma}n  = \int_{\pa B(0,6)}  \Sigma n \\
  & =  \int_{B(0,6)\setminus (B_0 \cup B_2)}  \div \Sigma \: + \:  \int_{\pa B_0 \cup \pa B_2}  \Sigma n  = 0, 
\end{align*}
where the last equality comes from the first and  fourth  line of \eqref{corrector_yz}. For the second point,  we compute 
\begin{align*}
S_f  & =  \int_{\pa B(0,6)} \overline{\Sigma}n \otimes y  - \int_{B(0,6)} \overline{\Sigma}    =\int_{\pa B(0,6)} \Sigma n \otimes y  - \int_{B(0,6)} \overline{\Sigma}  \\
  & = \int_{B(0,6)\setminus (B_0 \cup B_2)}  (\div \Sigma) \otimes y + \int_{\pa B_0 \cup \pa B_2} \Sigma n \otimes y  + \int_{B(0,6)} \Sigma -   \int_{B(0,6)} \overline{\Sigma} \\ 
  & =    \int_{\pa B_0 \cup \pa B_2} \Sigma n \otimes y   +   \int_{B(0,6)} \Sigma  \: - \:    \int_{B(0,6)} \overline{\Sigma} 
\end{align*}  
where the last equality comes from the first line of \eqref{corrector_yz}.  The last two terms at the r.h.s. are symmetric, as $\Sigma$ and $\overline{\Sigma}$ are. Moreover, the relation $\int_{\pa B_0 \cup \pa B_2} \Sigma n \times y = 0$, deduced from the fourth and fifth lines of \eqref{corrector_yz}, implies that the first term is symmetric. Hence, $S_f$ is symmetric. Moreover, it can always be assumed to be trace-free by a proper normalization of the pressure. 

\mspace
Back to the asymptotic expansion of $u(x)$, taking into account the fact that $S_f$ is symmetric and trace-free, a little calculation shows the formula:  
\begin{align*} 
u(x) & = - \sum_{l=1}^3 \pa_l O_s(x) \int_{B(0,6)} y_l f(y) dy + O(|x|^{-3})  = \frac{3}{8\pi} \frac{S_f : (x \otimes x)}{|x|^5} + O(|x|^{-3}).    
\end{align*}
Note that this term is close to the expression of $\Phi_0$ in  \eqref{def_Phi0}, replacing $S$ by $S_f$ and neglecting the $O(|x|^{-5})$ terms. More precisely, considering the symmetric gradients, we get 
$$ D u(x) = - \frac{3}{20\pi} \mM_0(x) S_f + O(|x|^{-4}) $$
which concludes the proof. 
\end{proof}

\bspace
To shorten the analysis of integrals involving $ D \Psi_{y,z}(x)$, it will be convenient to sum up the two previous propositions into a single one: 
%
%

\begin{corollary} \label{cor_global_psi_xyz} {\bf (Global behaviour)}

\sspace
Let $\chi_0$ a smooth radial function with $\chi_0 = 0$ in $B(0,\frac{2+R_0}{4})$, $\chi_0=1$ outside $B(0,R_0)$.  The function 
\begin{equation} \label{def_S_xyz}
S_{y,z}(x) :=  \chi_0(x-y) \chi_0(x-z) \chi_0(y-z)    D \Psi_{y,z}(x) 
\end{equation}
satisfies 
\begin{align*}
   S_{y,z}(x) 
=  \Big([\chi_0 \mM_0](x-y) \chi_0(x-z) + [\chi_0 \mM_0](x-z) \chi_0(x-y) \Big)  \mathcal{N}_1(y-z) S +  R_{y,z}(x) 
\end{align*}
where $\mN_1, R_{y,z}(x)$ are smooth in their arguments, with   ($\langle \xi \rangle := \sqrt{1+|\xi|^2}$): 
\begin{align*}
|\mN_1(x-y) - \chi_0 \mM_0(x-y)| & \le C \langle x-y \rangle^{-4},  \\
|R_{y,z}(x)| & \le C \big(\langle x-y\rangle^{-4} \langle y-z\rangle^{-4} + \langle x-z\rangle^{-4} \langle y-z\rangle^{-4}\big),
\end{align*}
\end{corollary}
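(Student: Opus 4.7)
The approach is to glue the two asymptotic expansions supplied by Propositions~\ref{prop_large} and~\ref{prop_short} via a smooth partition of unity in the variable $w := y-z$. Fix $\eta_S, \eta_L \in C^\infty([0,\infty))$ with $\eta_S + \eta_L \equiv 1$, $\text{supp}\,\eta_S \subset [0, 2R_1]$ and $\text{supp}\,\eta_L \subset [R_1, \infty)$, and define
\[
\mN_1(w) := \chi_0(w)\bigl[\eta_S(|w|)\,\mM_s(w) + \eta_L(|w|)\,\mM_l(w)\bigr].
\]

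Decomposing $\chi_0(y-z) = \chi_0(y-z)\eta_L(|y-z|) + \chi_0(y-z)\eta_S(|y-z|)$ inside the definition of $S_{y,z}$, the plan is to treat each piece separately. On the $\eta_L$-piece the support condition forces $|y-z|\ge R_1$, and the cutoffs $\chi_0(x-y)\chi_0(x-z)$ force $|x-y|, |x-z| \ge (2+R_0)/4$; so Proposition~\ref{prop_large} applies and I substitute its expansion directly. On the $\eta_S$-piece one has $(2+R_0)/4 \le |y-z| \le 2R_1$; here I further split via a smooth cutoff $\psi(|x-y|)$ supported in $[R_2,\infty)$. On $\{|x-y| \ge R_2\}$ Proposition~\ref{prop_short} applies; the symmetric region $\{|x-z|\ge R_2\}$ is handled identically thanks to the symmetry $\Psi_{y,z}=\Psi_{z,y}$ from Lemma~\ref{lem_sym}. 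On the remaining subregion, where $|x-y|,|x-z| \le 2R_2$ and $|y-z| \le 2R_1$, the triple $(x,y,z)$ lies, modulo the translation $x \mapsto x-y$, in a compact set of admissible configurations, so $|D\Psi_{y,z}(x)|$ is uniformly bounded by elliptic regularity and the whole contribution is absorbed into the remainder.

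The algebraic identity
\[
\chi_0(x-y)\chi_0(x-z)\bigl(\mM_0(x-y)+\mM_0(x-z)\bigr) = [\chi_0\mM_0](x-y)\chi_0(x-z) + [\chi_0\mM_0](x-z)\chi_0(x-y)
\]
then collapses the assembled leading contributions to the desired form, with coefficient precisely $\mN_1(y-z)$ by construction. For the bound on $\mN_1 - \chi_0\mM_0$: on $|w| \ge 2R_1$ it follows from $|\mM_l(w)-\mM_0(w)| \le C|w|^{-4}$ together with $\chi_0 \equiv 1$ there; on the compact annulus $(2+R_0)/4 \le |w| \le 2R_1$ both functions are smooth and bounded, while $\langle w\rangle^{-4}$ is bounded below, so the bound is trivial. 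The bound on $R_{y,z}(x)$ is obtained identically: the decay in Propositions~\ref{prop_large} and~\ref{prop_short} covers the unbounded regimes, while on the compact subregion $R_{y,z}$ is uniformly bounded and the right-hand side $\langle x-y\rangle^{-4}\langle y-z\rangle^{-4} + \langle x-z\rangle^{-4}\langle y-z\rangle^{-4}$ is itself uniformly bounded below.

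The main obstacle is the compact subregion where $x$ is close to both $y$ and $z$ with $|y-z|$ moderate: neither Proposition~\ref{prop_large} nor Proposition~\ref{prop_short} applies there, and one must fall back on a qualitative compactness argument (uniform smoothness of $\Psi_{y,z}$ over the compact set of admissible configurations) in order to absorb $D\Psi_{y,z}$ into the remainder. Beyond this subtlety the proof is simply a careful bookkeeping of cutoffs and of the two asymptotic expansions already proven.
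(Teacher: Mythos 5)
Your proposal is correct and follows essentially the same route as the paper: a smooth partition of unity in $|y-z|$ separating the regimes of Propositions \ref{prop_large} and \ref{prop_short}, an extra cutoff in $|x-y|$ for the moderate-separation regime, absorption of the remaining compact set of configurations into the remainder by uniform boundedness, and the same definition of $\mN_1$ as a cutoff-weighted combination of $\mM_l$ and $\mM_s$. The verification of the bounds (decay from the propositions on unbounded regimes, $\langle\cdot\rangle^{-4}$ bounded below on compact ones) matches the paper's argument.
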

\begin{remark}
Corollary \ref{cor_global_psi_xyz} implies the crude bound 
\begin{equation} \label{crude_bound}
|S_{y,z}(x)| \le C   \Big( \frac{1}{\langle x-y \rangle^3} +   \frac{1}{\langle x-z\rangle^3} \Big)    \frac{1}{\langle y-z\rangle^{3}}, 
\end{equation}
that we will  use several times in the rest of the paper. 
\end{remark}

\begin{proof}
Let $R > 0$, and $\chi_R$  a smooth radial function with $\chi_R = 0$ in $B(0,R)$, $\chi_R=1$ outside $B(0,2R)$. We decompose
\begin{align*}
  S_{y,z}(x) 
= & \chi_0(x-y) \chi_0(x-z) \chi_0(y-z)    D \Psi_{y,z}(x)  \\
= &  \chi_0(x-y) \chi_0(x-z)   \chi_{R_1}(y-z)   D \Psi_{y,z}(x) \\
+ &  \chi_{R_2}(x-y)   \chi_0(x-z) (\chi_0-\chi_{R_1})(y-z)   D \Psi_{y,z}(x) \\
+ &  (\chi_0 - \chi_{R_2})(x-y) \chi_0(x-z)   (\chi_0-\chi_{R_1})(y-z)    D \Psi_{y,z}(x) \\
= & I_{y,z}(x) + J_{y,z}(x) + K_{y,z}(x). 
\end{align*}
The term $K_{y,z}(x)$ can be put in the remainder. For  $I_{y,z}(x)$, we use Proposition \ref{prop_large}: 
\begin{align*}
I_{y,z}(x) =&   \chi_0(x-y) \mM_0(x-y)  \chi_0(x-z)  \chi_{R_1}(y-z)  \mM_l(y-z) \\
& + \chi_0(x-z) \mM_0(x-z)  \chi_0(x-y)  \chi_{R_1}(y-z)  \mM_l(y-z) + \text{remainder} 
\end{align*}
Finally, for  $J_{y,z}(x)$, we use Proposition \ref{prop_short}: 
\begin{align*}
J_{y,z}(x) = &    \chi_{R_2}(x-y)   \mM_0(x-y)   \chi_0(x-z)    (\chi_0-\chi_{R_1})(y-z)  \mM_s(y-z) \\
& +  \chi_0(x-z)  \mM_0(x-z)     \chi_{R_2}(x-y)   (\chi_0-\chi_{R_1})(y-z)   \mM_s(y-z) + \text{remainder} \\
 = &    \chi_{0}(x-y)  \mM_0(x-y)   \chi_0(x-z)    (\chi_0-\chi_{R_1})(y-z)  \mM_s(y-z) \\
& +  \chi_0(x-z)  \mM_0(x-z)     \chi_0(x-y)  (\chi_0-\chi_{R_1})(y-z)   \mM_s(y-z) + \text{remainder}   \\
\end{align*}
The corollary follows, by setting: 
\begin{align*}
 \mN_1 =  \chi_{R_1}  \mM_l +   (\chi_0-\chi_{R_1}) \mM_s 
\end{align*}
\end{proof}

\begin{corollary} \label{cor_rho_2}
For all $ 1 \le q < \infty$, for all $G_1, G_2, G_3 \in   \R +  (L^q(\R^3) \cap L^\infty(\R^3))$, and for all  $m \in (1,\infty)$, there exists $C > 0$ such that 
\begin{equation*}
\int_{B(0,N^{\frac13})} \int_{B_x}   \Big| \int_{B(0,N^{\frac13})^2}  S_{y,z}(x') G_1(x-y) G_2(x-z) G_3(y-z)  dy dz \Big|^m dx'      dx \le  C  N 
\end{equation*}
\end{corollary}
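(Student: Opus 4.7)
The strategy is to decompose $S_{y,z}(x')$ via Corollary \ref{cor_global_psi_xyz} into the two symmetric Calderón-Zygmund (CZ) main terms plus the remainder $R_{y,z}(x')$, and to treat each contribution by iterated $L^m$-continuity of CZ operators, combined with a crucial localization exploiting $x' \in B_x$.

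The remainder is immediate: from $|R_{y,z}(x')| \lesssim (\langle x'-y\rangle^{-4} + \langle x'-z\rangle^{-4}) \langle y-z\rangle^{-4}$ and boundedness of the $G_i$, the inner $(y,z)$-integral is uniformly $O(1)$, yielding a contribution bounded by $CN$. For one of the two symmetric main terms, write $\chi_0 = 1 - \eta$ with $\eta$ compactly supported, $\mN_1 = [\chi_0 \mM_0] + E$ with $|E(w)| \lesssim \langle w\rangle^{-4}$, and $G_i = c_i + \tilde G_i$ with $\tilde G_i \in L^q(\R^3) \cap L^\infty(\R^3)$. The finite-$q$ assumption ensures $[\chi_0 \mM_0]\, \tilde G_i \in L^1(\R^3)$ via Hölder, since $[\chi_0 \mM_0] \in L^{q'}(\R^3)$ for $q'>1$. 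The $\eta$-piece localizes $z$ to $B(x',R_0)$ and is $O(1)$ pointwise by elementary integration. The $E$-piece reduces to a single outer CZ in $x'$ of an $L^\infty$-bounded, $B(0,N^{1/3})$-supported source. The truly new term is the bi-CZ piece
$$T_{\mathrm{bi}}(x,x') := \iint [\chi_0 \mM_0](x'-y)\, [\chi_0 \mM_0](y-z)\, G_1(x-y)\, G_2(x-z)\, G_3(y-z)\, \mathbf{1}_{y,z \in B(0,N^{1/3})}\, dy\, dz,$$
which we view as the convolution in $x'$ with $[\chi_0 \mM_0]$ of
$$F_x(y) := G_1(x-y)\, \mathbf{1}_{y \in B(0,N^{1/3})} \int [\chi_0 \mM_0](y-z)\, G_2(x-z)\, G_3(y-z)\, \mathbf{1}_{z \in B(0,N^{1/3})}\, dz,$$
with $F_x$ itself a CZ convolution in $z$ (modulo a Young-type convolution with the $L^1$ kernel $[\chi_0 \mM_0]\, \tilde G_3$). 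Iterated $L^m$-CZ then yields the global estimate $\|T_{\mathrm{bi}}(x,\cdot)\|_{L^m(\R^3)} \le C N^{1/m}$.

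Integrated over $x \in B(0,N^{1/3})$, this global bound only gives $CN^2$; the missing factor of $N$ is gained by exploiting the restriction $x' \in B_x$. Using Fubini with $h := x'-x \in B(0,1)$, the problem reduces to proving
$$\int_{B(0,N^{1/3})} |T_{\mathrm{bi}}(x,x+h)|^m\, dx \le CN \quad \text{uniformly in } |h|\le 1.$$
For $x$ in the bulk (say $|x|\le \tfrac12 N^{1/3}$) we aim for the uniform pointwise bound $|T_{\mathrm{bi}}(x,x+h)|\le C$: after translating $(y,z) \mapsto (x+y, x+z)$, the integrand depends on $x$ only through the domain $B(-x,N^{1/3})^2$, and CZ cancellation in both variables is exploited as follows. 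The inner $z$-CZ produces a bounded function of $y$ that is close to its $\R^3$ limit away from the support boundary, with deviation controlled by a boundary correction of size $O(|y|/N^{1/3})$; when paired with the outer $y$-CZ the logarithmic divergence of the naive bound is replaced by an $O(1)$ quantity by the standard Hölder-on-differences estimate. For $x$ in the boundary layer of volume $O(N^{2/3})$, the crude bound $|T_{\mathrm{bi}}|\lesssim \log N$ is sufficient since $N^{2/3}(\log N)^m = o(N)$.

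The main obstacle is precisely this uniform pointwise control on $T_{\mathrm{bi}}(x,x+h)$ in the bulk: the naive absolute estimate is $O(\log N)$ because $[\chi_0 \mM_0]$ is only borderline integrable, and the logarithm must be eliminated using CZ cancellation in both the inner ($z$) and outer ($y$) integrals simultaneously. This is the step alluded to in the remark following Theorem \ref{main_thm}, where $L^p$-continuity of CZ operators must be invoked several times to identify and exploit the correct cancellations; the decay-of-correlations hypothesis enters only through $\tilde G_i \in L^q$ with $q<\infty$, which turns the $\tilde G_i$-parts into benign Young convolutions.
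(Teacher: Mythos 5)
Your decomposition (main CZ terms plus remainder via Corollary \ref{cor_global_psi_xyz}, remainder bounded absolutely, $G_i = c_i + \tilde G_i$ with $[\chi_0\mM_0]\,\tilde G_3 \in L^1$) matches the paper's setup, and the treatment of $I_3$ and of the $\tilde G_3$-type contributions is fine. But the core of your argument has a genuine gap. You reduce everything to a uniform pointwise bound $|T_{\mathrm{bi}}(x,x+h)| \le C$ in the bulk, which you yourself flag as ``the main obstacle'' and only sketch via boundary corrections and ``Hölder-on-differences.'' This step is not only unproven, it is doubtful as stated: the factor $G_2(x-z)$ does not share its variable with either kernel ($x'-y$ or $y-z$), so its non-constant part $\tilde G_2 \in L^q\cap L^\infty$ cannot be absorbed into an $L^1$ kernel by Hölder, and the inner transform $y \mapsto \int [\chi_0\mM_0](y-z)\,\tilde G_2(x-z)\,1_{B(0,N^{1/3})}(z)\,dz$ is only controlled in $L^p$, $p \ge q$, not in $L^\infty$ (Calderón--Zygmund operators do not preserve $L^\infty$). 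Your claim that the inner CZ ``produces a bounded function of $y$'' therefore fails for general $G_2$, and the subsequent pointwise bulk estimate collapses.

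The paper avoids this entirely, and your premise that ``the global bound only gives $CN^2$'' is where you go astray. The correct bookkeeping is: (i) split $\chi_0(x'-z)G_2(x-z) = C_0 + R(x',x,z)$ with $|R| \le F(x-z)$, $F\in L^q\cap L^\infty$, and bound the $R$-part absolutely using \eqref{crude_bound}; (ii) replace $[\chi_0\mM_0](x'-y)$ by $[\chi_0\mM_0](x-y)$, the difference being $O(\langle x-y\rangle^{-4})$ for $x'\in B_x$, hence harmless. After these two steps the inner double integral is a function of $x$ alone of the form $\bigl([\chi_0\mM_0]\,G_1\bigr)\star j_N(x)$ with $j_N := (\mN_1 S\, G_3)\star 1_{B(0,N^{1/3})}$. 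Both convolution kernels are of the form $C_0\,\chi_0\mM_0 + L^1$, hence bounded on $L^m(\R^3)$, so
\begin{equation*}
\int_{B(0,N^{1/3})}\int_{B_x}|\cdots|^m\,dx'\,dx \;\le\; C\,\bigl\|\bigl([\chi_0\mM_0]G_1\bigr)\star j_N\bigr\|_{L^m(\R^3)}^m \;\le\; C\,\|1_{B(0,N^{1/3})}\|_{L^m}^m \;=\; C\,N .
\end{equation*}
The factor $N$ comes directly from $\|1_{B(0,N^{1/3})}\|_{L^m}^m$ at the innermost level; nothing needs to be ``gained'' from the localization $x'\in B_x$ beyond the cheap kernel replacement in (ii), and no pointwise or boundary-layer analysis is required. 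You should restructure your proof around this observation rather than attempting an $L^\infty$ bound on the bi-CZ term.
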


\begin{proof}
We use Corollary \ref{cor_global_psi_xyz}: 
\begin{multline*}
 \int_{B(0,N^{\frac13})} \int_{B_x}   \Big| \int_{B(0,N^{\frac13})^2} S_{y,z}(x') G_1(x-y) G_2(x-z) G_3(y-z)  dy dz \Big|^m dx'      dx \le  C (I_1 + I_2 + I_3) 
\end{multline*}
where
\begin{align*}
I_1 & := \int_{B(0,N^{\frac13})}  \int_{B_x}   \Big| \int_{B(0,N^{\frac13})^2} [\chi_0\mM_0](x'-y) \chi_0(x'-z) \mN_1(y-z) S G_1(x-y) G_2(x-z) G_3(y-z) dy dz \Big|^m dx'   dx \\
I_2 & := \int_{B(0,N^{\frac13})}  \int_{B_x}   \Big| \int_{B(0,N^{\frac13})^2}  [\chi_0\mM_0](x'-z) \chi_0(x'-y) \mN_1(y-z) S  G_1(x-y) G_2(x-z) G_3(y-z) dy dz \Big|^m dx'   dx \\
I_3 & := \int_{B(0,N^{\frac13})}  \int_{B_x}   \Big| \int_{B(0,N^{\frac13})^2} R_{y,z}(x') G_1(x-y) G_2(x-z) G_3(y-z)   dy dz \Big|^m dx'   dx \\
\end{align*}
The third term is no problem: denoting again $\langle\xi\rangle  \: := \: \sqrt{1+|\xi|^2}$, 
\begin{equation}  \label{estimate_I_3}
\begin{aligned}
I_3 & \le   C \int_{B(0,N^{\frac13})}     \int_{B_x}  \Big|  \int_{B(0,N^{\frac13})^2} |R_{y,z}(x')|  dy dz \Big|^m dx'   dx \\
& \le C' \int_{B(0,N^{\frac13})} \Big|  \int_{\R^6}\big( \langle x-y \rangle^{-4} + \langle x-z \rangle^{-4} \big) \langle y-z \rangle^{-4}    dy dz \Big|^m  dx \\
& \le C' \int_{B(0,N^{\frac13})} \Big|  \int_{\R^6}\big( \langle y' \rangle^{-4} + \langle z'\rangle^{-4} \big) \langle y'-z' \rangle^{-4}    dy' dz' \Big|^m  dx \le C'' N.
\end{aligned}
\end{equation}
The first and second terms are symmetric in $y$ and $z$, we only consider the first one. As $G_2$ belongs to
 $\R + (L^q \cap L^\infty)$, there exists $C_0 \in \R$ such that  for all $x' \in B_x$, 
$$\chi_0(x'-z) G_2(x-z) =  C_0 + R(x',x,z), \quad \text{with } \:  |R(x',x,z)| \le F(x-z)$$
for $F \in L^q(\R^3) \cap L^\infty(\R^3)$  for some finite $q$. Hence, 
$$I_1 \le C \int_{B(0,N^{\frac13})}(I_a(x) +  I_b(x)) dx$$
 where 
 \begin{align*}
I_a(x) & :=  \int_{B_x}   \Big| \int_{B(0,N^{\frac13})^2} [\chi_0\mM_0](x-y) \chi_0(x'-z) \mN_1(y-z) S G_1(x-y)  G_3(y-z)dy dz \Big|^m dx'   \\
I_b(x) & :=  \int_{B_x}   \Big| \int_{B(0,N^{\frac13})^2} \chi_0(x'-z)  [\chi_0\mM_0](x-y) \mN_1(y-z) S  G_1(x-y)  G_3(y-z)  R(x',x,z)  dy dz \Big|^m dx'  \\
\end{align*}
For the second term, uniformly in $x$, 
\begin{align*}
 I_b(x) 
\le C & \Big| \int_{\R^6}   \frac{1}{\langle x-y\rangle^3}\, \frac{1}{\langle y-z\rangle^3} F(x-z)  dy dz \Big|^m   \\
\le C  & \Big| \int_{\R^6}  \frac{1}{\langle y'\rangle^3 \langle y''\rangle^3} F(y'-y'')  dy' dy''  \Big|^m   = C  \Big|   \int_{\R^3} \frac{1}{\langle y'\rangle^3}  \Big( \frac{1}{\langle\cdot \rangle^3}  \star F \Big)(y') dy'  \Big|^m .  
\end{align*}
As $F$ is in $L^q \cap L^\infty$ for some finite $q$, and  $\frac{1}{\langle \cdot \rangle^3} $ is in $L^p$ for all $p > 1$, the convolution belongs to $L^r$ for all finite $r > q$, and finally the integral is finite. This implies that  
\begin{equation} \label{estim_int_I}
\begin{aligned}
\int_{B(0,N^{\frac13})}  I_b(x) dx  \le  C  N.   
\end{aligned}
\end{equation}
\mspace
As regards $I_a$, we  decompose
\begin{align*}
& I_a(x) \\ 
\le &  C  \int_{B_x}    \Big| \int_{B(0,N^{\frac13})^2}  \big( [\chi_0 \mM_0](x'-y) - [\chi_0\mM_0](x-y)\big)  \mN_1(y-z) S G_1(x-y) G_3(y-z) dy dz \Big|^m dx'   
\\
 +  & C   \Big| \int_{B(0,N^{\frac13})^2} [\chi_0\mM_0](x-y)  \mN_1(y-z) S G_1(x-y) G_3(y-z) dy dz \Big|^m  =: C I_{a,1}(x) + C I_{a,2}(x). 
\end{align*} 
As regards the first term, for all $x' \in B_x$,   $\big|[\chi_0\mM_0](x'-y) - [\chi_0\mM_0](x-y)| \le C \langle x-y\rangle^{-4}$, hence   
\begin{align*}
 I_{a,1}(x)  \le C  \Big| \int_{B(0,N^{\frac13})} \frac{G_1(x-y)}{\langle x-y\rangle^4}  |j_N(y)| dy  \Big|^m \le C'  \Big| \int_{\R^3} \frac{1}{\langle x-y\rangle^4}  |j_N(y)| dy  \Big|^m  , 
\end{align*}
where $j_N :=  \big(\mN_1 S G_3 \big)    \star 1_{B(0,N^{\frac13})}$. The crucial point is that  $  \big(\mN_1 S G_3 \big)  \star$ is continuous over $L^m$. Indeed,  $G_3$  belongs to $\R + L^q \cap L^\infty$. Moreover, $\mN_1$ belongs to $\chi_0 \mM_0 + L^1$.  It follows that  for some $C_0 \in \R$,  $\mN_1 S G_3$ belongs to  $C_0 \chi_0 \mM_0 + L^1$ and therefore is continuous over $L^m$ for any $1 < m < \infty$, see Theorem \ref{CZ_thm}. We deduce 
$$ \|j_N\|_{L^m(\R^3)} \le C \| 1_{B(0,N^{\frac13})}\|_{L^m(\R^3)} \le C' N^{\frac1m} $$
Finally, 
\begin{equation} \label{bound_J1}
\int_{B(0,N^{\frac13})} I_{a,1}(x) dx \le \int_{\R^3}  I_{a,1}(x) dx \le \|  \frac{1}{\langle \cdot \rangle^4}   \star |j_N| \|^m_{L^m} \le C \|j_N\|^m_{L^m(\R^3)} \le C' N 
\end{equation}
Eventually, a similar bound may be established on $I_{a,2}$. Namely, we find that 
\begin{equation} \label{bound_J2}
\begin{aligned}
\int_{B(0,N^{\frac13})} I_{a,2}(x)  dx \le \int_{\R^3}  I_{a,2}(x) dx \le C  \|   [\chi_0 \mM_0]  G_1  \star j_N \|^m_{L^m} \le C \|j_N\|^m_{L^m(\R^3)} \le C' N 
\end{aligned}
\end{equation}
Here, we have used that $\chi_0 \mM_0  G_1  \star$ is continuous over $L^m$, as it belongs to $C_0 \chi_0 \mM_0 + L^1$ for some $C_0$.   We conclude that 
\begin{equation}  \label{estimate_I_1}
I_1 \le C N. 
\end{equation}
As mentioned earlier, by symmetry, the same bound holds for $I_2$: 
\begin{equation}  \label{estimate_I_2}
I_2 \le C N. 
\end{equation}
By gathering estimates \eqref{estimate_I_3}-\eqref{estimate_I_1}-\eqref{estimate_I_2}, we conclude the proof. 
\end{proof}

\begin{corollary} \label{cor_rho_3}
Assume that  $G = G(y,z)$ takes the form : 
\begin{equation} \label{space_G}
\begin{aligned}
& G(y,z) = \sum_{1\le k \le K} G_{1,k}(y) G_{2,k}(z) G_{3,k}(y-z) + R(y,z), \\ 
&\text{where for $1 \le q < \infty$},\\
& G_{i,k} \in \R + (L^q(\R^3) \cap L^\infty(\R^3)) \: \forall i,k, \quad R \in L^q(\R^3 \times \R^3) \cap L^\infty(\R^3 \times \R^3).
\end{aligned}
\end{equation}
Then,  for all $m \in (1,\infty)$, there exists $C > 0$, 
\begin{equation*}
\int_{B(0,N^{\frac13})} \int_{B_x}   \Big| \int_{B(0,N^{\frac13})^2}  S_{y,z}(x')   G(y-x,z-x) dy dz \Big|^m dx'    dx \le C N 
\end{equation*}
\end{corollary}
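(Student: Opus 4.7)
The plan is to reduce everything to Corollary \ref{cor_rho_2} by exploiting the linear structure in the decomposition \eqref{space_G}. Since the triple integral depends linearly on $G$, I will treat separately the contribution of the finite tensor-product sum $\sum_k G_{1,k}(y-x) G_{2,k}(z-x) G_{3,k}(y-z)$ and the contribution of the remainder $R(y-x, z-x)$.

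For each summand in the sum, the only discrepancy with the hypotheses of Corollary \ref{cor_rho_2} is that $G_{1,k}$ and $G_{2,k}$ are evaluated at $y-x$ and $z-x$ rather than $x-y$ and $x-z$. The space $\R + \bigl(L^q(\R^3) \cap L^\infty(\R^3)\bigr)$ is invariant under the symmetry $\xi \mapsto -\xi$, so after replacing $G_{1,k}$ and $G_{2,k}$ by $G_{1,k}(-\cdot)$ and $G_{2,k}(-\cdot)$, Corollary \ref{cor_rho_2} applies verbatim and produces a $C_k N$ bound. Summing over the finitely many indices $k$ disposes of this part.

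For the remainder contribution, the plan is to use the crude pointwise bound \eqref{crude_bound} on $S_{y,z}(x')$. Since $x' \in B_x$ implies $\langle x'-y\rangle \ge c \langle x-y\rangle$ (and similarly in $z$), after the change of variables $y' := y-x$, $z' := z-x$ the inner integral is bounded, uniformly in $x$, by
\begin{equation*}
\int_{\R^6} K(y',z')\, |R(y',z')|\, dy'\, dz', \qquad K(y',z') := \bigl(\langle y'\rangle^{-3} + \langle z'\rangle^{-3}\bigr)\, \langle y'-z'\rangle^{-3}.
\end{equation*}
By Hölder's inequality combined with the interpolation $L^q \cap L^\infty \subset L^r$ for every $r \in [q,\infty]$, the finiteness of this integral reduces to showing $K \in L^{r'}(\R^6)$ for the conjugate exponent $r'$, which a direct Fubini computation confirms whenever $r' > 1$. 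Raising the resulting uniform-in-$x$ bound to the $m$-th power and integrating over $x \in B(0,N^{\frac13})$ and $x' \in B_x$ yields a contribution of order $N$, as desired.

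The one delicate point is that the kernel $K$ is borderline-not-integrable on $\R^6$: it fails to lie in $L^1$, so I must really exploit both endpoints $L^q$ and $L^\infty$ of the space containing $R$ in order to choose $r' > 1$ \emph{strictly} in the Hölder step. Everything else is a straightforward assembly of Corollary \ref{cor_rho_2}, the crude bound \eqref{crude_bound} of Corollary \ref{cor_global_psi_xyz}, and elementary inequalities.
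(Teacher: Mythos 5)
Your proof is correct and follows essentially the same route as the paper: the tensor-product part is delegated to Corollary \ref{cor_rho_2} (the sign flip $y-x$ versus $x-y$ is indeed harmless since $\R + (L^q\cap L^\infty)$ is reflection-invariant), and the remainder is handled by the crude bound \eqref{crude_bound} followed by H\"older with exponents $q,q'$, which is exactly the paper's estimate $\|\langle\cdot\rangle^{-3q'}\star\langle\cdot\rangle^{-3q'}\|_{L^1}\|R\|_{L^q}$. The only cosmetic difference is that you phrase the H\"older step for a general $r\in[q,\infty)$, whereas taking $r=q$ directly suffices.
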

\begin{proof}
By  the previous Corollary \ref{cor_rho_2}, it remains to bound 
 \begin{align*}
& \int_{B(0,N^{\frac13})} \int_{B_x}   \Big| \int_{B(0,N^{\frac13})^2}  |S_{y,z}(x')|  \, |R(y-x,z-x)| dy dz \Big|^m dx'    dx 
\end{align*}
We use \eqref{crude_bound} to find :  
\begin{align*}  
& \int_{B(0,N^{\frac13})} \int_{B_x}   \Big| \int_{B(0,N^{\frac13})^2}  |S_{y,z}(x')|  |R(y-x,z-x)| dy dz \Big|^m dx'    dx  \\
 \le \: & C \int_{B(0,N^{\frac13})}   \Big| \int_{B(0,N^{\frac13})^2}  \Big(  \frac{1}{\langle x-y\rangle^3} + \frac{1}{\langle x-z\rangle^3} \Big) \frac{1}{\langle y-z\rangle^3} |R(y-x,z-x)|   dy dz \Big|^m dx 
\end{align*}
We find that uniformly in $x$, 
\begin{align*}
 \Big| \int_{B(0,N^{\frac13})^2}    \frac{1}{\langle x-y\rangle^3}  \frac{1}{\langle y-z\rangle^3}   |R(y-x,z-x|  dy dz \Big|   \le &   \:  \int_{\R^6}    \frac{1}{\langle y'\rangle^3}  \frac{1}{\langle y'-z'\rangle^3}  |R(y',z')| dy' dz'  \\
\le &   \:  \Big\|  \frac{1}{\langle \cdot \rangle^{3q'}}    \star  \frac{1}{\langle \cdot \rangle^{3q'} }\Big\|_{L^1} \|R\|_{L^q} \le C  
\end{align*}
and similarly, 
\begin{align*}
  \Big| \int_{B(0,N^{\frac13})^2}   \frac{1}{\langle x-z\rangle^3}  \frac{1}{\langle y-z\rangle^3}  |R(y-x,z-x)|    dy dz \Big|  & \le  C   \end{align*}
 We conclude 
 \begin{equation}
 \int_{B(0,N^{\frac13})} \int_{B_x}   \Big| \int_{B(0,N^{\frac13})^2} | S_{y,z}(x')| \, |R(y-x,z-x)| dy dz \Big|^m dx'  dx  \le C N 
\end{equation}
and the corollary follows. 
 \end{proof}

\mspace
The next corollary can be proved very similarly to the previous one. We state it without proof:  
\begin{corollary} \label{cor_rho_3_bis}
Let $G$ satisfying \eqref{space_G}, and $n \ge m \in (1,\infty)$. There exists $C > 0$, 
\begin{equation*}
\int_{B(0,N^{\frac13})} \int_{B_x} \Big(  \int_{B(0,N^{\frac13})}  \Big| \int_{B(0,N^{\frac13})}  S_{y,z}(x')   G(x,y,z)  dz \Big|^m dy \Big)^{\frac{n}{m}} dx'    dx \le C N 
\end{equation*}
\end{corollary}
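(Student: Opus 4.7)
My plan is to prove the uniform pointwise bound $\big(\int_{B(0,N^{1/3})} |F(y;x,x')|^m dy\big)^{n/m} \le C$ for all $(x,x',N)$, where $F(y;x,x') := \int_{B(0,N^{1/3})} S_{y,z}(x') G(x,y,z) dz$. Integration over the outer region $B(0,N^{1/3}) \times B_x$, of volume $O(N)$, then yields the bound $CN$ independently of $n$.

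Following the proof of Corollary \ref{cor_rho_3}, I decompose $G$ via \eqref{space_G} and $S_{y,z}(x')$ via Corollary \ref{cor_global_psi_xyz} into two principal CZ-type pieces $T_1, T_2$ plus remainders. The remainder contributions are estimated pointwise using the crude bound \eqref{crude_bound} and the integrability of $R$, exactly as in the proof of Corollary \ref{cor_rho_3}. For the principal term $T_1 = [\chi_0 \mM_0](x'-y) \chi_0(x'-z) \mN_1(y-z) S$ paired with a tensorized factor from $G$, the corresponding contribution to $F$ factors as $[\chi_0 \mM_0](x'-y) G_{1,k}(y-x) H(y;x,x')$, where $H$ is the $z$-integral. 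The key step is to show that $H$ is uniformly bounded in $(y, x, x', N)$. Splitting $G_{i,k} = c_{i,k} + \tilde G_{i,k}$ with $\tilde G_{i,k} \in L^q \cap L^\infty$, the cross-terms involving at least one $\tilde G_{i,k}$ are handled by H\"older or Young's inequality, using that $\mN_1 - \chi_0 \mM_0 \in L^1$ and $\chi_0 \mM_0 \in L^{q'}$. The remaining constant-coefficient term requires Calder\'on-Zygmund cancellation: using $\chi_0 \mM_0 S \approx D(\chi_0 \Phi_0)$ plus the divergence theorem, and exploiting the $|w|^{-2}$ decay of $\Phi_0$ at infinity, one shows that $\int_{B(y, R)} (\chi_0 \mM_0)(w) dw$ is bounded uniformly in $R$ and $y$. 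Granted this uniform bound on $H$, we obtain $|F^{(1)}(y;x,x')| \le C |[\chi_0 \mM_0](x'-y)|$, and since $[\chi_0 \mM_0] \in L^m(\R^3)$ for $m > 1$ (bounded with $|y|^{-3}$ decay at infinity, with $x'$-independent norm), this yields $\|F^{(1)}(\cdot;x,x')\|_{L^m_y} \le C$ uniformly.

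For the symmetric principal term $T_2 = [\chi_0 \mM_0](x'-z) \chi_0(x'-y) \mN_1(y-z) S$, the contribution factors instead as $\chi_0(x'-y) G_{1,k}(y-x) H_2(y;x,x')$, where $H_2$ is a convolution in $y$ of the CZ-type kernel $\mN_1 G_{3,k}$ against the $L^m$-function $[\chi_0 \mM_0](x'-\cdot) G_{2,k}(\cdot-x)$. The $L^m$-continuity of Calder\'on-Zygmund convolutions (as in Theorem \ref{CZ_thm}) then gives $\|H_2(\cdot;x,x')\|_{L^m(\R^3)} \le C$ uniformly, and hence $\|F^{(2)}(\cdot;x,x')\|_{L^m_y} \le C$ as well. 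Summing the two principal contributions and the remainders, we obtain the desired uniform bound on $\|F(\cdot;x,x')\|_{L^m_y}$.

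The main obstacle is the Calder\'on-Zygmund cancellation used to control $H$ for $T_1$: the naive pointwise estimate $\int_{B(y, N^{1/3})} |\mN_1(w)| dw \sim \log N$ would introduce logarithmic losses incompatible with the $CN$ target. The delicate point is that the cancellation must hold uniformly in the shift $y$, which follows from the homogeneity of the leading $|w|^{-2}$ part of $\Phi_0$ at infinity and the bounded limit of its boundary integrals on $\partial B(y, R)$ as $R \to \infty$. All other ingredients of the proof are direct analogues of those used in Corollary \ref{cor_rho_3}.
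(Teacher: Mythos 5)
Your overall architecture --- proving that $\big(\int_{B(0,N^{1/3})}|F(y;x,x')|^m dy\big)^{1/m}$ is bounded uniformly in $(x,x',N)$ and then integrating over the outer region of volume $O(N)$ --- is legitimate, and would in fact deliver something stronger than the stated corollary. It also genuinely departs from the paper's template: the proofs of Corollaries \ref{cor_rho_2}--\ref{cor_rho_3} never establish pointwise uniform bounds, but instead exploit the $L^m$-continuity of Calder\'on--Zygmund convolutions applied to $1_{B(0,N^{1/3})}$ together with the outer integration in $x$. Your treatment of the $T_2$ term (convolution structure in $y$, CZ continuity on $L^m$) and of the various remainders is correct.

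The gap sits exactly at the step you flag as delicate: the claim that $\int_{B(y,R)}[\chi_0\mM_0](w)\,dw$ is bounded uniformly in $y$ and $R$. The justification you offer --- $\chi_0\mM_0 S\approx D(\chi_0\Phi_0)$, divergence theorem, and the decay $|\Phi_0(w)|\lesssim |w|^{-2}$ --- does not suffice: when the sphere $\partial B(y,R)$ passes within distance $O(1)$ of the origin (i.e. $\big||y|-R\big|=O(1)$), one has $\int_{\partial B(y,R)}\langle w\rangle^{-2}d\sigma(w)\sim \log R$, so homogeneity and decay alone only yield $O(\log R)$, which propagates to a final bound $C N(\log N)^{n}$ --- precisely the logarithmic loss the corollary is designed to avoid. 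The uniform bound is nevertheless true, but for a more specific reason that you never invoke: the leading homogeneous part of $\Phi_0$ is \emph{odd}, equivalently the degree $-3$ part $\mM$ of $\mM_0$ is \emph{even} in addition to having zero average on annuli. It is this parity that makes the integral of $\mM$ over the spherical cap $\{\omega:\, r\omega\in B(y,R)\}$ an $O(|t_r|)$ quantity (the cap is an $O(t_r)$-perturbation of a half-sphere, and the half-sphere average of an even, mean-zero kernel vanishes), whence summability over dyadic shells. For an odd kernel with the same homogeneity and the same annulus cancellation, e.g. $w_1/|w|^4$, the quantity $\int_{B(Re_1,R)\cap\{|w|>1\}}$ genuinely grows like $\log R$. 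So the central analytic fact of your proof must be derived from the parity of $\mM_0$; as written, your argument is either circular (asserting the boundedness of the boundary integrals, which is the point) or only delivers $CN(\log N)^n$.
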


\begin{remark} \label{remark_G}
We denote by $\mathcal{G}$ the space of functions $G = G(y,z)$ satisfying \eqref{space_G}, that is the assumptions of Corollaries \ref{cor_rho_3} and  \ref{cor_rho_3_bis}. Note that this space is an algebra, being stable by sum and product. It will play an important role in the proof of our Propositions \ref{prop2} and \ref{prop3}, notably because of the next lemma.
\end{remark}
\begin{lemma} \label{lemma_G}
The functions 
$$(y,z) \rightarrow g_2(0,y), \quad (y,z) \rightarrow g_2(0,z),  \quad (y,z) \rightarrow g_2(y,z)  \quad \text{and}  \quad (y,z) \rightarrow \frac{g_3(0,y,z)}{g_2(0,y)}$$
 belong to $\mathcal{G}$. 
\end{lemma}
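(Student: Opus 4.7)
The first three claims follow immediately from \eqref{H2} and stationarity of the point process. Since $g_2(0,y) = 1 + R_1(y)$ with $R_1 \in L^q \cap L^\infty$, the map $y \mapsto g_2(0,y)$ lies in $\R + (L^q(\R^3) \cap L^\infty(\R^3))$, so $(y,z)\mapsto g_2(0,y)$ is a pure $G_{1,1}(y)$-tensor of the required form with zero remainder; the case $g_2(0,z)$ is symmetric. Stationarity reduces $g_2(y,z) = g_2(0, z-y) = 1 + R_1(z-y)$ to a pure $G_{3,1}(y-z)$-factor, also in $\R + (L^q \cap L^\infty)$.

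The real work is for $(y,z) \mapsto g_3(0,y,z)/g_2(0,y)$. My plan is to cut off the region where $g_2(0,y)$ is not bounded below. Since $R_1(y) \to 0$ at infinity, one can fix $R_* > 0$ so that $g_2(0,y) \ge 1/2$ on $\{|y| > R_*\}$, and a smooth $\chi$ equal to $1$ on $\{|y| \le R_*\}$ with support in $\{|y| \le 2R_*\}$. On the support of $1-\chi$, the function $\psi(y) := (1-\chi(y))/g_2(0,y)$ can be controlled by a Neumann expansion of $(1+R_1)^{-1}$, and one checks $\psi \in 1 + (L^q \cap L^\infty) \subset \R + (L^q \cap L^\infty)$. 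Inserting the refined identity \eqref{H3'} together with $g_2(y,z) - 1 = R_1(z-y)$ from stationarity produces
\[
(g_3/g_2)(1-\chi(y)) = g_2(0,z)(1-\chi(y)) + R_1(z-y)\, g_2(0,z)\, \psi(y) + \tilde R_2(y,z)\, \psi(y).
\]
The first two summands are explicit tensor products of factors in $\R + (L^q \cap L^\infty)$. For the third, $|\tilde R_2 \psi| \lesssim F(y-z) F(y)$, and the change of variable $w = y - z$ shows this lies in $L^q(\R^6) \cap L^\infty(\R^6)$, so it is absorbed into the remainder $R$.

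For the remaining piece $(g_3/g_2)\chi(y)$, I would use \eqref{H3} directly together with the convention $g_3/g_2 = 0$ on $\{g_2(0,y)=0\}$ to write
\[
(g_3/g_2)\chi(y) = g_2(0,z)\bigl[\chi(y)\, 1_{\{g_2(0,y)>0\}}\bigr] + R_2(y,z)\,\chi(y)\, 1_{\{g_2(0,y)>0\}}.
\]
The first term is a tensor product: the $y$-factor is bounded with compact support in $\{|y|\le 2R_*\}$, hence in $L^q(\R^3) \cap L^\infty$, and the $z$-factor $g_2(0,z)$ is in $\R + (L^q \cap L^\infty)$. For the second term, compact support of $\chi$ combined with $|R_2(y,z)|\le F(y-z)$ yields $\int \chi(y)^q F(y-z)^q\,dy\,dz \le \|\chi\|_q^q \|F\|_q^q < \infty$, so this piece too fits in the remainder.

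The key subtlety, which I expect to be the main obstacle, is precisely that $R_2(y,z)$ alone cannot be placed in the remainder: the bound $|R_2|\le F(y-z)$ gives $L^\infty$ but not $L^q(\R^6)$ control, and $1/g_2(0,y)$ is not globally bounded either. The cutoff $\chi$ repairs both defects in one stroke. On $\{|y|>R_*\}$ one trades $R_2$ for $\tilde R_2$ via \eqref{H3'}, and the extra decay factor $F(y)$ then delivers $L^q(\R^6)$ integrability through Fubini and the translation change of variable. On $\{|y|\le 2R_*\}$, the compact $y$-support supplied by $\chi$ alone provides the needed integrability, even with the weaker bound $|R_2|\le F(y-z)$.
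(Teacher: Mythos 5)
Your proof is correct and follows essentially the same route as the paper: a smooth cutoff in $y$ separating the region where $g_2(0,y)$ is bounded below (where one divides \eqref{H3'} by $g_2(0,y)$, the extra factor $F(y)$ in $\tilde R_2$ giving $L^q(\R^6)$ integrability of the remainder) from a compact region (where \eqref{H3} and the compact support handle the remainder $R_2$). The only difference is the orientation of the cutoff, and your insertion of the indicator $1_{\{g_2(0,y)>0\}}$ is if anything slightly more careful than the paper's write-up.
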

\begin{proof}
The first three functions belong to $\mathcal{G}$ as a direct consequence of \eqref{H2}. For the last one, we proceed as follows. 
By \eqref{H2}, for $R$ large enough and $|y| \ge R$, $\rho_2(0,y) \ge \frac{1}{2}$. Let $\chi \in C^\infty(\R^3)$, such that $\chi = 0$ on $B(0,R)$, and 
$\chi =1$ in the large. Combining \eqref{H3} and \eqref{H3'}, we get 
\begin{align*}  
\frac{g_3(0,y,z)}{g_2(0,y)} = (1-\chi(y)) & g_2(0,z)  +  (1-\chi(y))  R_2(y,z)  \\
 + \chi(y) & g_2(0,z)  +  \frac{\chi(y) (g_2(y,z) - 1) g_2(0,z)}{g_2(0,y)}  + \frac{\chi(y)}{g_2(0,y)}  {\tilde R}_2(y,z), \\
 \text{ with } \:  |R_2(y,z)| \le  F(y-z), &\quad  |\tilde{R}_2(y,z)| \le F(y) F(y-z) , \quad F \in L^q \cap L^\infty, \quad 1 \le q <\infty
\end{align*}
The conclusion follows easily. 

\end{proof}

\subsection{Proof of Proposition \ref{main_prop}}
To prove that the limit exists and is bounded with respect to $\phi$, we first notice that from integration by parts, {\it cf.}  \eqref{property_Ii}: 
\begin{align*}
\mI_x(\Psi_{x,y}, P_{x,y}) & =  \int_{\pa B(x,\frac{2+R_0}{4})}   \big(  \sigma(\Psi_{x,y}, P_{x,y})n - 2 \Psi_{x,y} \big)  \cdot Sn.  
\end{align*}
By classical considerations, as the compatibility condition $\int_{\pa B(0,\frac{2+R_0}{4})} Sn = 0$ is satisfied, there exists a smooth matrix valued  $\tilde S = \tilde S(z)$ satisfying 
$$ \div \tilde S = 0 \: \text{ for   $\frac{2+R_0}{4} < |z| < \frac{R_0}{2}$,} \quad  \tilde S\vert_{\pa B(0,\frac{2+R_0}{4})} = S, \quad \tilde 
S\vert_{\pa B(0,\frac{R_0}{2})} = 0.   $$
Similarly, as $\int_{\pa B(0,\frac{2+R_0}{4})} Sn \cdot n = 0$, there exists a smooth vector valued  $\tilde s = \tilde s(z)$ satisfying 
$$ \div \tilde s = 0 \: \text{ for   $\frac{2+R_0}{4} < |z| < \frac{R_0}{2}$,} \quad  \tilde s\vert_{\pa B(0,\frac{2+R_0}{4})} = Sn, \quad \tilde 
s\vert_{\pa B(0,\frac{R_0}{2})} = 0.   $$
We find 
\begin{align*}
\mI_x(\Psi_{x,y}, P_{x,y}) & = 2  \int_{\frac{2+R_0}{4} < |z-x| < \frac{R_0}{2}} D\Psi_{x,y}(z) : \big(-D \tilde s +  \tilde S\big)(z-x) dz. 
\end{align*}
As in $\frac{2+R_0}{4} < |z-x| < \frac{R_0}{2}$, one has both $|x-z|, |z-y| \ge \frac{2+R_0}{4}$.  We can apply Proposition \ref{prop_large} (be careful that the roles of $x$ and $z$ are switched):  
 \begin{align*}
 D \Psi_{x,y}(z)  & =  \Big( \mM_0(z-y) + \mM_0(z-x) \Big)  \mM_l(y-x) S    +  R_{x,y}(z)  \\
& =  \mM_0(z-x)  \mM_0(y-x) S + O(|x-y|^{-4}). 
\end{align*}
By definition \eqref{def_mM0},  the first term reads $D \Phi $, with $\Phi$ satisfying the Stokes system outside $B_x$, together with the boundary condition  $D \Phi = -\mM_0(y-x) S$ at $B_x$. In particular
\begin{align*} 
 & 2  \int_{\frac{2+R_0}{4} < |z-x| < \frac{R_0}{2}} \big( \mM_0(z-x)  \mM_0(y-x) S \big) : \big(-D \tilde s +  \tilde S\big)(z-x) dz \\ 
  = & \mI_x(\Phi, P) =  \frac{20\pi}{3} \mM_0(y-x) S : S, 
 \end{align*}
see \eqref{stress_tensor_Phi_0}-\eqref{stress_tensor_Phi_0_bis}. Eventually, we get that  
\begin{equation*}
\mI_x(\Psi_{x,y}, P_{x,y})  =  \frac{20\pi}{3} \mM_0(y-x) S : S  + O(|x-y|^{-4}), 
\end{equation*}
Let $\mM$ the homogeneous part of degree $-3$ in $\mM_0$. We write 
\begin{align*} 
&   \frac{1}{ 2  |B(0,N^{\frac13})|} \int_{B(0,N^{\frac13})^2}  \mI_x(\Psi_{x,y}, P_{x,y})  g_2(x,y) dx dy  \\
= &  \frac{10\pi}{3 |B(0,N^{\frac13})|}  \int_{B(0,N^{\frac13})^2} [\chi_0 \mM](x-y) S :  S  dx dy  \\
+ &  \frac{1}{ 2  |B(0,N^{\frac13})|} \int_{B(0,N^{\frac13})^2} \chi_0(x-y) \Big( \mI_x(\Psi_{x,y}, P_{x,y})  -  \frac{20\pi}{3}  \mM(x-y) S : S \Big)  dx dy \\
+ & \frac{1}{ 2  |B(0,N^{\frac13})|} \int_{B(0,N^{\frac13})^2} \chi_0(x-y)  \mI_x(\Psi_{x,y}, P_{x,y})  (g_2(x,y) - 1) dx dy \\
 =: & I_N + J_N + K_N
\end{align*} 
with $\chi_0$ the truncation mentioned in Corollary \ref{cor_global_psi_xyz}. 
Clearly,  
$$ |J_N| \le C  \frac{1}{|B(0,N^{\frac13})|} \int_{B(0,N^{\frac13})^2} \frac{1}{\langle x-y \rangle^4} dy dx \le C,$$
and thanks to \eqref{H2}, 
$$ |K_N| \le C  \frac{1}{|B(0,N^{\frac13})|} \int_{B(0,N^{\frac13})^2} \frac{F(x-y)}{\langle x-y \rangle^3} dx dy \le C. $$
Finally, 
\begin{align*}
I_N = &    \frac{10\pi}{ 3 |B(0,N^{\frac13})|}  \int_{B(0,N^{\frac13}-R_0)}\int_{B(0,N^{\frac13})}  [\chi_0 \mM](x-y) S :  S  dx dy + O(N^{-\frac13} \ln N)   \\
= & \frac{10\pi}{ 3 |B(0,N^{\frac13})|} \int_{B(0,N^{\frac13}-R_0)}   \int_{B(0,N^{\frac13})}  \mM(x-y)  S :  S  dx dy \\
+ & \frac{10\pi}{ 3 |B(0,N^{\frac13})|} \int_{B(0,N^{\frac13}-R_0)}   \int_{B(0,N^{\frac13})}  [(\chi_0 -1)\mM](x-y)  S :  S  dx dy +  O(N^{-\frac13} \ln N) 
\end{align*}
where the integrals in $x$ at the right-hand side can be understood in the sense of the principal value, as $\mM(x-y)$ and  $[(\chi_0 -1)\mM]$ have zero average on small  annuli centered at the origin (see Appendix \ref{CZ_thm}). By homogeneity, 
\begin{align*}
& \frac{10\pi}{ 3 |B(0,N^{\frac13})|} \int_{B(0,N^{\frac13}-R_0)}   \int_{B(0,N^{\frac13})}  \mM(x-y)  S :  S  dx dy  \\
= \: &   \frac{10\pi}{ 3 |B(0,1)|} \int_{B(0,1}   \int_{B(0,1)}  \mM(x-y)  S :  S  dx dy + o(1) \\
\end{align*}
while as $\chi_0 - 1$ is supported in $B(0,R_0)$, 
\begin{align*}
&  \frac{10\pi}{ 3 |B(0,N^{\frac13})|} \int_{B(0,N^{\frac13}-R_0)}   \int_{B(0,N^{\frac13})}  [(\chi_0 -1)\mM](x-y)  S :  S  dx dy  \\
& = \frac{10\pi}{3} \int_{B(0,R_0)}  [(\chi_0 -1)\mM](x')  S :  S  dx + o(1)   
\end{align*} 
This concludes the proof of the proposition. 

\begin{remark}
Writing the field $\Phi_0$ in \eqref{def_Phi0} as $\Phi_0(z) = \mathcal{P}_0(z)S$, we notice that our proof of Proposition \ref{main_prop} relies on the formula
$$  D \Psi_{x,y}(z) =    D \Psi^{app}_{x,y}(z) + O(|x-y|^{-4}), $$
with
$$\Psi^{app}_{x,y}(z) :=   \mathcal{P}_0(z-x) \mM(y-x) S,  \quad \text{with} \quad  \mM(x) S :=  D\Big( -\frac{5}{2} S : (x \otimes x) \frac{x}{|x|^5} \Big).$$
 the part homogeneous of degree $-3$ in $\mM_0$. 
One can write 
\begin{equation} \label{formula_mu2_bis}
\begin{aligned}
 \mu_2 & =  \lim_{N \rightarrow +\infty} \frac{1}{ 2 |B(0,N^{\frac13})|} \int_{B(0,N^\frac{1}{3})^2}\mI_x(\Psi^{app}_{x,y}, P^{app}_{x,y}) g_2(x,y) dx dy \\
& + \lim_{N \rightarrow +\infty} \frac{1}{ 2 |B(0,N^{\frac13})|} \int_{B(0,N^\frac{1}{3})^2}\mI_x(\Psi_{x,y} - \Psi^{app}_{x,y},P_{x,y} - P^{app}_{x,y}) g_2(x,y) dx dy
\end{aligned}
\end{equation}
where both limits exist separately, thanks to above arguments. With this decomposition, one can see that the Batchelor-Green formula in Theorem \ref{main_thm} is also valid for processes satisfying \eqref{Hstrong}, and studied previously in \cite{GVH}. Indeed, we proved in \cite{GVH} that under \eqref{Hstrong}, the second order correction is given by 
 $$ \nu_2 S : S :=  \lim_{N \rightarrow +\infty} \frac{1}{ 2 |B(0,N^{\frac13})|} \sum_{i \neq k \in I_N}  \mI_i(\Psi^{app}_{x_i, x_k}, P^{app}_{x_i,x_k})  $$
which in the random setting coincides with the limit of the expectations, that is 
$$ \nu_2 S : S =  \lim_{N \rightarrow +\infty} \frac{1}{ 2 |B(0,N^{\frac13})|} \int_{B(0,N^\frac{1}{3})^2}\mI_x(\Psi^{app}_{x,y}, P^{app}_{x,y}) g_2(x,y) dx dy.  $$
Moreover, under \eqref{Hstrong}, the second term at the r.h.s. of \eqref{formula_mu2_bis} satisfies 
\begin{align*}
  \limsup_{N \rightarrow +\infty} & \frac{1}{ 2 |B(0,N^{\frac13})|} \int_{B(0,N^\frac{1}{3})^2}\mI_x(\Psi_{x,y} - \Psi^{app}_{x,y},P_{x,y} - P^{app}_{x,y}) g_2(x,y) dx dy \\
 \le & \frac{C}{|B(0,N^{\frac13})|} \int_{B(0,N^\frac{1}{3})^2 \cap \{|x-y| \ge c \phi^{-\frac13}\}} \frac{1}{|x-y|^{4}} dy dx \le  C  \phi^{1/3}
\end{align*}
Hence,  $\nu_2 = \mu_2 + O(\phi^{1/3})$ showing that the Batchelor-Green formula also applies to the setting considered in \cite{GVH}.
\end{remark}

\subsection{Proof of Theorem \ref{main_thm}}  \label{subsec_proof_thm}
Inspired by the cluster expansion  \eqref{approx_u_NS}, we define $u_{N,err}$ ({\em the error term}) through: 
\begin{equation} \label{exact_u_NS}
\begin{aligned}
 u_{I_N,S} & = u_{\emptyset, S} + \sum_{\{k\} \subset I_N  } (u_{\{k\}, S} - u_{\emptyset, S}) +  \sum_{\substack{\{k,l\} \subset I_N,\\ k \neq l }} \big(  u_{\{k,l\}, S} - u_{\{k\},S}  - u_{\{l\},S} + u_{\emptyset, S} \big) + u_{N,err}  \\
 & =  Sx  + \sum_{\{k\} \subset I_N  } \Phi_{\{k\}}  +  \sum_{\substack{\{k,l\} \subset I_N,\\ k \neq l }}  \Psi_{\{k,l\}} + u_{N,err} 
\end{aligned}
\end{equation}
where functions $u_I$, $\Phi_I$ and $\Psi_{k,l}$ were introduced in \eqref{stokes}, \eqref{def_Phi_I} and \eqref{def_Psi_kl} respectively. 
By Proposition  \ref{approx_effective_viscosity}, following the formal calculations of Paragraph \ref{subsec_formal}, we find that 
\begin{align*}
 \mu_h S : S & = |S|^2 + \frac{5}{2}\phi |S|^2  \\ 
 & +  \lim_{N \rightarrow +\infty} \bigg( \frac{1}{ 2  |B(0,N^{\frac13})|} \int_{B(0,N^{\frac13})^2}  \mI_x(\Psi_{x,y}, P_{x,y})  \rho_2(x,y) dx dy   \\
 & +  \E  \frac{1}{ 2 |B(0,N^{\frac13})|} \sum_{i \in I_N}  \mI_i(u_{N,err}, p_{N,err})   \bigg)
\end{align*}
where $\mI_i$, $\mI_x$, $\Psi_{x,y}$ were introduced in \eqref{def_I_i} and \eqref{def_Psi_I_xy}. 

\mspace
Key estimates are provided by: 

\begin{proposition} \label{prop2}
$$ \limsup_{N \rightarrow +\infty} \E \frac{1}{|B(0,N^{\frac13})|} \int_{\cup_{i \in I_N} B_i} |D(u_{N,err})|^2 = O(\phi^3) $$\end{proposition}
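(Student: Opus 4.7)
The starting point is an explicit formula for $D u_{N,err}$ on each ball $B_i$, $i \in I_N$. Since $u_{I_N,S}$ is rigid on $B_i$, we have $D u_{I_N,S} = 0$ there, so by \eqref{exact_u_NS},
\[
D u_{N,err}\vert_{B_i} \: = \: -D\Big(Sx + \Phi_{\{i\}}\Big) - \sum_{k \in I_N \setminus \{i\}} D \Phi_{\{k\}} - \sum_{l \in I_N \setminus \{i\}} D \Psi_{\{i,l\}} - \!\!\!\sum_{\{k,l\} \subset I_N \setminus \{i\}, k \neq l} \!\!\! D \Psi_{\{k,l\}}.
\]
The first parenthesis vanishes on $B_i$ thanks to $D\Phi_{\{i\}} = -S$ there. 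A short computation shows that on $B_i$, $D \Phi_{\{k\}}(x) = \mM_0(x - x_k) S$ and $D \Psi_{\{i,l\}}(x) = -\mM_0(x - x_l) S$, so the two single-index sums cancel exactly. We are left with the clean identity
\[
D u_{N,err}(x)\vert_{B_i} \: = \: - \!\!\!\sum_{\{k,l\} \subset I_N \setminus \{i\}, k \neq l} \!\!\! D \Psi_{\{k,l\}}(x).
\]
Note also that under \eqref{H1} and for $x \in B_i$, the cutoffs in the definition \eqref{def_S_xyz} of $S_{y,z}$ are all equal to one when $y = x_k, z = x_l$, $\{k,l\} \subset I_N \setminus \{i\}$, so we may replace $D\Psi_{\{k,l\}}$ by $S_{x_k, x_l}$ in every subsequent integral.

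Squaring this identity, integrating over $B_i$ and summing over $i$, then taking expectation, one turns sums over particles into integrals against reduced moment measures, according to the number of distinct indices among $(i, k_1, l_1, k_2, l_2)$. Three cases arise, corresponding to $\rho_3$, $\rho_4$ and $\rho_5$; all three will be shown to contribute $O(\phi^3)$ (and even $O(\phi^4)$, $O(\phi^5)$ respectively) once the $1/N$ prefactor is accounted for. The diagonal case (three distinct indices) reads
\[
\frac{\phi^3}{N} \int_{B(0,N^{\frac13})^3} \!\!\int_{B_x} |S_{y,z}(x')|^2 \, g_3(x,y,z) \, dx'\, dx\, dy\, dz,
\]
which, after translating $(x,y,z) \mapsto (x, x-y, x-z)$, falls within the scope of Corollary \ref{cor_rho_3} with $m = 2$ and $G(y,z) = g_3(0,-y,-z)/1 \in \mathcal G$ by Lemma \ref{lemma_G} combined with \eqref{H3'}. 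This yields a bound of order $\phi^3$. The four-index case is analogous: one of the two $\Psi$'s shares an index with the other, so $g_4$ appears; writing $g_4$ via \eqref{H4'} as a sum of products of $g_k$'s plus a remainder, all factors belong to $\mathcal G$ (algebra, Lemma \ref{lemma_G}) and Corollary \ref{cor_rho_3_bis} applies with $(m,n) = (2,2)$, giving $O(\phi^4)$.

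The genuine difficulty is the five-index cross term, where $\{k_1,l_1\} \cap \{k_2,l_2\} = \emptyset$. Here the relevant integral is
\[
\frac{\phi^5}{N} \int_{B(0,N^{\frac13})^5} \!\!\int_{B_x} S_{y_1,z_1}(x') : S_{y_2,z_2}(x') \, g_5(x, y_1, z_1, y_2, z_2) \, dx'\, dx\, dy_1\, dz_1\, dy_2\, dz_2,
\]
and the integrand is only borderline integrable on the whole space: the crude bound $|S_{y,z}| \lesssim \langle x-y\rangle^{-3}\langle y-z\rangle^{-3}$ is critical. This is where the refined decorrelation formula \eqref{H5'} is essential: it decomposes $g_5$ into two factorized pieces splitting $\{y_1,z_1\}$ from $\{y_2,z_2\}$, a further subtracted piece, and a remainder $\tilde R_4$ bounded by $F(y_1-y_2)F(z_1-z_2)$ with $F \in L^q \cap L^\infty$. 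Inserting this decomposition, each main piece is a function in $\mathcal G$ (in the two pair variables, uniformly in $x$), allowing application of Corollary \ref{cor_rho_3_bis} with $m = n = 2$ (first integrating the inner variable $z_1$, Calderon–Zygmund in $y_1$, then iterating over $(y_2, z_2)$). The remainder piece $\tilde R_4$ gains integrability from the decay of $F$, and is handled by a direct crude integration as in the proof of Corollary \ref{cor_rho_3}. In each sub-case the integral is $O(N)$, whence the total contribution is $O(\phi^5)$. Summing over the three cases yields the stated bound $O(\phi^3)$; the main obstacle is the five-index term, and the key technical point is that the factorization \eqref{H5'} combined with $L^2$-boundedness of Calderon–Zygmund operators (via the decomposition of $S_{y,z}$ in Corollary \ref{cor_global_psi_xyz}) is exactly what converts borderline integrability into a uniform-in-$N$ bound.
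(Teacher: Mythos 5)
Your overall architecture coincides with the paper's: your trace identity for $D u_{N,err}\vert_{B_i}$ is exactly the paper's \eqref{trace_u_err} (derived the same way, via the cancellation of the one-index sums), and the subsequent splitting of $\E\int_{\cup B_i}|Du_{N,err}|^2$ into $\rho_3$, $\rho_4$, $\rho_5$ contributions handled by the Calderon--Zygmund corollaries is the paper's route. The $\rho_3$ and $\rho_4$ terms are essentially right, with one caveat: the diagonal term $\int|S_{y,z}(x')|^2 g_3$ is not covered by Corollary \ref{cor_rho_3}, whose statement concerns $\big|\int S_{y,z}G\big|^m$ rather than $\int |S_{y,z}|^m G$; but this term follows at once from the crude bound \eqref{crude_bound}, since $\langle\cdot\rangle^{-6}$ is integrable, which is what the paper does.

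The genuine gap is in the five-index term. You assert that the main pieces of \eqref{H5'} are ``functions in $\mathcal{G}$ in the two pair variables''. They are not: with the pairs $(y_1,z_1)$ and $(y_2,z_2)$ dictated by the product $S_{y_1,z_1}:S_{y_2,z_2}$, a term such as $g_4(0,y,y',z)\,g_4(0,y,y',z')/g_3(0,y,y')$ couples variables from both pairs inside a single $g_4$ factor, whichever way you match $(y,y',z,z')$ to $(y_1,z_1,y_2,z_2)$; the same holds for the second and third terms of \eqref{H5'}. This cannot be waved away, because the crude bound \eqref{crude_bound} applied to the five-point integrand only yields $O(N(\ln N)^2)$ (the kernel $\langle x-y\rangle^{-3}\langle y-z\rangle^{-3}$ is borderline non-integrable), so the unfactorized pieces cannot be absorbed by brute force and the Calderon--Zygmund structure is indispensable --- but it is only usable once the correlation function is factorized over the $\Psi$-pairs. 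The paper devotes Lemma \ref{lemma_g5} and Appendix \ref{appendix_C} precisely to converting \eqref{H5'} into a sum of genuinely pair-factorized terms $G(y,z)G'(y',z')$ times remainders of four types, via further expansions using \eqref{H3}--\eqref{H4}, \eqref{H3'}--\eqref{H4'} and cutoffs near the zero sets of the denominators; moreover the resulting cross terms (types II and III) are not dispatched by Corollary \ref{cor_rho_3_bis} with $m=n=2$ alone, but require additional H\"older and Young convolution arguments with carefully tuned exponents. Your plan assumes the conclusion of Lemma \ref{lemma_g5} without the work that makes it true, and this is the core difficulty of the proposition.
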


\begin{proposition} \label{prop3}
$$ \limsup_{N \rightarrow +\infty} \E \frac{1}{|B(0,N^{\frac13})|} \int_{\cup_{i \in I_N} B_i} |D (\Phi_{I_N} - \sum_{k \in I_N} \Phi_{\{k\}}) |^2 = O(\phi^2)  $$\end{proposition}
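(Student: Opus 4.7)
The plan is to compute $D\big(\Phi_{I_N} - \sum_{k \in I_N} \Phi_{\{k\}}\big)(x)$ pointwise on the union of balls, then to bound its mean square through an expansion based on the moment measures $\rho_2$ and $\rho_3$. On each ball $B_i$ with $i \in I_N$, both rigidities $D\Phi_{I_N} = -S$ and $D\Phi_{\{i\}} = -S$ hold, while for every $k \in I_N \setminus \{i\}$ the non-penetration condition \eqref{H1} gives $B_i \cap B_k = \emptyset$, so that $\Phi_{\{k\}}$ is smooth on $B_i$ with $D\Phi_{\{k\}}(x) = \mM_0(x - x_k)\, S$ by \eqref{def_mM0}. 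Consequently
\begin{equation*}
D\Big(\Phi_{I_N} - \sum_{k \in I_N} \Phi_{\{k\}}\Big)(x) \: = \: -\!\!\sum_{k \in I_N,\, k \neq i} \mM_0(x - x_k)\, S, \qquad x \in B_i.
\end{equation*}

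Squaring, summing over $i$, taking expectation, and using $\rho_2 = \phi^2 g_2$ and $\rho_3 = \phi^3 g_3$, the quantity to control splits as $T_2 + T_3$, with
\begin{equation*}
T_2 = \phi^2 \iint_{B(0, N^{\frac13})^2} g_2(y,z) \int_{B(y,1)} |\mM_0(x-z)S|^2 \,dx\, dy\, dz
\end{equation*}
the diagonal ($k=l$) contribution and
\begin{equation*}
T_3 = \phi^3 \iiint_{B(0, N^{\frac13})^3} g_3(y, z, w) \int_{B(y,1)} \mM_0(x-z)S : \mM_0(x-w) S \,dx\, dy\, dz\, dw
\end{equation*}
the off-diagonal ($k \neq l$) one. $T_2$ is handled by absolute estimates: by \eqref{H1}, $|y-z| \ge R_0$, hence $|\mM_0(x-z)|^2 \lesssim \langle y-z\rangle^{-6}$ uniformly in $x \in B(y,1)$; combined with $\|g_2\|_\infty < \infty$ and $\iint_{B(0,N^{\frac13})^2} \langle y-z\rangle^{-6} \,dy\, dz \lesssim N$, one obtains $T_2 \lesssim N\phi^2$.

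The main difficulty is $T_3$: the naive pointwise bound $|\mM_0(x-z)|\,|\mM_0(x-w)| \lesssim |y-z|^{-3}|y-w|^{-3}$ combined with $g_3 \in L^\infty$ only yields $T_3 \lesssim \phi^3 N (\ln N)^2$, which after division by $N$ diverges as $N \rightarrow +\infty$; genuine cancellations must be extracted. For this I use the decomposition \eqref{H3'}: after the change of variables $y' = z-y$, $z' = w-y$, the tensorized main piece $g_2(0,y') g_2(0,z')$ of $g_3$ decouples the $z$- and $w$-integrations, and the inner integral becomes a product of two truncated convolutions of $\mM_0$ against $g_2(0,\cdot)$, evaluated at points of $B(0,1)$. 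Writing $g_2(0,\cdot) = 1 + R_1$ with $R_1 \in L^q \cap L^\infty$, the PV-integral of the homogeneous part of $\mM_0$ over balls vanishes by Calderon-Zygmund cancellation (only $O(N^{2/3})$ boundary and smooth-part corrections remain), while $R_1 \star \mM_0$ is controlled in $L^2$ by Theorem \ref{CZ_thm}; this yields the tensorized contribution $\lesssim N \phi^3$. The secondary term $(g_2(y',z')-1)\, g_2(0,z')$ carries extra decay in $y'-z'$, and the remainder $\tilde{R}_2(y',z')$ from \eqref{H3'} is absolutely integrable in both $y'$ and $z'-y'$; both are bounded by the same Calderon-Zygmund estimates underlying Corollaries \ref{cor_rho_2}--\ref{cor_rho_3_bis} and contribute $\lesssim N\phi^3$. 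Summing, $T_2 + T_3 \lesssim N\phi^2$, and dividing by $|B(0,N^{\frac13})| \sim N$ yields the claimed $O(\phi^2)$. The main obstacle is the tensorized piece of $T_3$: only borderline-integrable kernels appear, and it is essential to combine the PV-vanishing of $\mM_0$ on balls (to kill the constant $1$ in $g_2 = 1 + R_1$) with the Calderon-Zygmund $L^p$ boundedness of $R_1 \star \mM_0$, exactly in the spirit of the techniques developed in Section 3.2.
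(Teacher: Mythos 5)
Your proposal is correct and follows essentially the same route as the paper: the same pointwise identity $D(\Phi_{I_N}-\sum_k\Phi_{\{k\}})\vert_{B_i}=-\sum_{k\neq i}\mM_0(\cdot-x_k)S$, the same split into a $\rho_2$ term bounded by absolute integrability and a $\rho_3$ term, and for the latter the same tensorization of $g_3$ (the paper uses \eqref{H3} directly rather than \eqref{H3'}, but the main term $g_2\otimes g_2$ and the absolutely integrable remainder with decay in $y-y'$ are identical) followed by the Calderon--Zygmund $L^2$ bound of Theorem \ref{CZ_thm} on the decoupled convolution. The only cosmetic difference is that you explicitly separate $g_2=1+R_1$ and invoke PV cancellation for the constant part, where the paper applies $L^2$-continuity of convolution with $\chi_0\mM_0\,g$ to $1_{B(0,N^{1/3})}$ in one stroke.
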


\mspace
Let us show how they imply Theorem \ref{main_thm}.
we have to show that
\begin{equation}
\limsup_{N \rightarrow +\infty}  \Big|    \E  \frac{1}{|B(0,N^{\frac13})|} \sum_{i \in I_N}  \mI_i(u_{N,err}, p_{N,err}) \Big|  =    O(\phi^{\frac52}) 
\end{equation}
We write 
\begin{align*}
\sum_{i \in I_N}  \mI_i(u_{N,err}, p_{N,err}) & = \sum_{i \in I_N} \int_{\pa B_i} \sigma(u_{N,err}, p_{N,err})n \cdot Sn - 2 \sum_{i \in I_N} \int_{\pa B_i}  u_{N,err}  \cdot Sn \\ 
& = - \sum_{i \in I_N} \int_{\pa B_i} \sigma(u_{N,err}, p_{N,err})n \cdot \Phi_{I_N,S} - 2 \sum_{i \in I_N} \int_{B_i} D(u_{N,err}) : S
\end{align*}
Here, we have used the fact that $\Phi_{I_N,S} = -S(x-x_i) + \text{rigid vector field}$,  and the integral relations in \eqref{stokes}. Note that $u_{N,err}$ and  $\Phi_{I_N,S}$ both solve a homogeneous Stokes equation outside $\cup B_i$. Hence, after a double integration by parts, we find 
\begin{align*}
   \sum_{i \in I_N}  \mI_i(u_{N,err}, p_{N,err}) &  =   - \sum_{i \in I_N}  \int_{\pa B_i} u_{N,err} \cdot \sigma(\Phi_{I_N}, P_{I_N}) n - 2 \sum_{i \in I_N} \int_{B_i} D(u_{N,err}) : S \\
 &  =  -\sum_{i \in I_N}  \int_{\pa B_i} u_{N,err} \cdot \sigma\big(\Phi_{I_N} - \sum_k \Phi_{\{k\}}, P_{I_N} - \sum_k P_{\{k\}}\big) n \\
 &   \quad   -\sum_{i \in I_N}  \int_{\pa B_i} u_{N,err}   \cdot \sigma\big(\sum_k \Phi_{\{k\}} , \sum_k P_{\{k\}}\big) n  - 2 \sum_{i \in I_N} \int_{B_i} D(u_{N,err}) : S  \\
 &  =: \: A_N + B_N 
 \end{align*}
The first term can be written 
\begin{align*}
|A_N| & =  \Big| 2 \int_{\R^3\setminus (\cup B_i)}  D(u_{N,err})  : D\big(\Phi_{I_N} - \sum_k \Phi_{\{k\}}\big) \Big| \\
& \le 2 \Big( \int_{\R^3}  |D(u_{N,err})|^2 \int_{\R^3}  |D (\Phi_{I_N} - \sum_{k \in I_N} \Phi_{\{k\}}) |^2 \Big)^{1/2} \\
& \le C \Big( \int_{\cup_{i \in I_N} B_i}  |D(u_{N,err})|^2 \int_{\cup_{i \in I_N} B_i}  |D (\Phi_{I_N} - \sum_{k \in I_N} \Phi_{\{k\}}) |^2 \Big)^{1/2} 
\end{align*}
where the last line comes from the well-known minimizing properties of solutions of homogeneous Stokes equations with prescribed symmetric gradients at the boundary of the domain.  Using Cauchy-Schwarz inequality, Propositions \ref{prop2} and \ref{prop3}, we find that 
$$ \limsup_{N} \E  \frac{1}{|B(0,N^{\frac13})|}  |A_N| = O(\phi^\frac52). $$
As regards $B_N$, we split the sum over $k$ in two: 
\begin{align*}
B_N =  - &\sum_{i \in I_N}   \int_{\pa B_i} u_{N,err}   \cdot \sigma\big(\Phi_{\{i\}} , P_{\{i\}}\big) n  \: - \: \sum_{i \in I_N}  \int_{\pa B_i} u_{N,err}   \cdot  \sigma\big(\sum_{k \neq i} \Phi_{\{k\}} , \sum_{k \neq i} P_{\{k\}}\big) n \\
 -2 &\sum_{i \in I_N} \int_{B_i} D(u_{N,err}) : S 
\end{align*}
As  $\sum_{k \neq i} \Phi_{\{k\}}$ solves the homogeneous equation inside $B_i$, we find
\begin{align*}
B_N & =  -\sum_{i \in I_N}   \int_{\pa B_i} u_{N,err}   \cdot \sigma\big(\Phi_{\{i\}} , P_{\{i\}}\big) n  \: - \:  \sum_{i \in I_N}  \int_{B_i} D(u_{N,err}) : D(\sum_{k \neq i} \Phi_{\{k\}} ) \\
&  -2 \sum_{i \in I_N} \int_{B_i} D(u_{N,err}) : S \\
& =  -\sum_{i \in I_N}   \int_{\pa B_i} u_{N,err}   \cdot \sigma\big(\Phi_{\{i\}} , P_{\{i\}}\big) n  \: - \:  \sum_{i \in I_N}  \int_{B_i} D(u_{N,err}) : D(\sum_{k} \Phi_{\{k\}} - \Phi_{I_N}) \\
& -2 \sum_{i \in I_N} \int_{B_i} D(u_{N,err}) : S \\
& =  -\sum_{i \in I_N}   \int_{\pa B_i} u_{N,err}   \cdot \sigma\big(\Phi_{\{i\}} , P_{\{i\}}\big) n  -2 \sum_{i \in I_N} \int_{B_i} D(u_{N,err}) : S + O(\phi^{\frac52}) N
\end{align*}
using Cauchy-Schwarz and Propositions \ref{prop2} and \ref{prop3} to bound the second term. As regards the first term,   thanks to \eqref{stress_tensor_Phi_0},
$$    \int_{\pa B_i} u_{N,err}   \cdot \sigma\big(\Phi_{\{i\}} , P_{\{i\}}\big) n -2 \sum_{i \in I_N} \int_{B_i} D(u_{N,err}) : S  = \int_{B_i} D(u_{N,err}) : S  $$
Moreover, by definition of $u_{N,err}$, one has 
\begin{equation} \label{trace_u_err}
D(u_{N,err})\vert_{B_i} = -  \sum_{\substack{\{k,l\} \subset I_N \setminus \{i\}, \\ k \neq l}}  D(\Psi_{\{k,l\}}) 
\end{equation}
Hence, 
\begin{align*}
B_N =   \frac{1}{2} \sum_{\substack{i,k,l  \in I_N \\ i \neq k \neq l}}  \int_{\pa B_i}  D(\Psi_{\{k,l\}}) : S    + O(\phi^{\frac52}) N
\end{align*}
so that 
\begin{align*} 
|\E B_N| & =  \Big| \frac{1}{2}  \int_{B(0,N^{\frac13})} \int_{B_x}   \Big( \int_{B(0,N^{\frac13})^2}  D \Psi_{y,z}(x') \rho_3(x,y,z) dy dz \Big)  : S dx' dx  \Big|   +   o(\phi^2 N)  \\
& \le  C N^{1/2}  \Big(  \int_{B(0,N^{\frac13})} \int_{B_x}   \Big| \int_{B(0,N^{\frac13})^2}  D \Psi_{y,z}(x')  \rho_3(x,y,z) dy dz \Big|^2  dx' dx \Big)^{1/2}   \\
& \le  C \phi^3 N 
\end{align*}
where the second bound comes from Cauchy-Schwarz inequality, and the third one comes from Corollary \ref{cor_rho_3}. It follows that 
$$ \limsup_{N} \E  \frac{1}{|B(0,N^{\frac13})|}  |B_N| = O(\phi^{\frac52}). $$
This concludes the proof of Theorem \ref{main_thm}. 
\subsection{Proof of auxiliary Proposition \ref{prop2}}
By \eqref{trace_u_err}, $\E \int_{\cup_{i \in I_N} B_i} |D(u_{N,err})|^2$ is fully explicit in terms of $\Psi_{y,z}$ and the first correlation functions of the process. We find 
\begin{align*}
 & \E \int_{\cup_{i \in I_N} B_i} |D(u_{N,err})|^2  =  \sum_{i \in I_N} \E \int_{B_i}   \Big| \sum_{\substack{\{k,l\} \subset I_N \setminus \{i\}  \\ k \neq l}}  D(\Psi_{\{k,l\}}) \Big|^2 \\
= & \frac{1}{2} \int_{B(0,N^{\frac13})^3}   \int_{B_x}  |   D \Psi_{y,z}(x') |^2   \rho_3(x,y,z) dy dz dx' dx \\
+ & \frac{1}{4} \int_{B(0,N^{\frac13})^4}   \int_{B_x}   D \Psi_{y,z}(x') : D(\Psi_{y,z'})(x')  \rho_4(x,y,z,z') dy dz dz' dx' dx \\
+ & \frac{1}{4}  \int_{B(0,N^{\frac13})^5}   \int_{B_x}   D \Psi_{y,z}(x') : D(\Psi_{y',z'})(x')  \rho_5(x,y,z,y',z') dy dy' dz dz' dx' dx \\
:= & \frac{1}{2} \phi^3 I + \frac{1}{4} \phi^4 J +  \frac{1}{4} \phi^5 K. 
\end{align*}
Using \eqref{crude_bound} we find 
\begin{align*} 
|I| & = \int_{B(0,N^{\frac13})^3}   \int_{B_x}  | S_{y,z}(x') |^2   g_3(x,y,z) dy dz dx' dx \\
& \le C   \int_{B(0,N^{\frac13})^3} \Big(  \frac{1}{\langle x-y\rangle^6} +  \frac{1}{\langle x-z\rangle^6} \Big) \frac{1}{\langle y-z\rangle^6}  dx dy dz \\
& \le  C N.  
\end{align*}
For the analysis of $J$, we rely on \eqref{H4}. Defining $G(y,z) :=    \frac{g_3(0,y,z)}{\sqrt{g_2(0,y)}}$, we find 
\begin{align*}
 g_4(0,y,z,z') &  = g_3(0,y,z)    \Big( \frac{g_3(0,y,z')}{g_2(0,y)}  + R_3(y,z,z')\Big), \\
 & = G(y,z) G(y,z') +  R(y,z,z'),  \quad |R(y,z,z')| \le F(z-z'), \quad F \in L^q \cap L^\infty. 
 \end{align*}
Hence,
\begin{align*} 
|J| & \le   \int_{B(0,N^{\frac13})}   \int_{B_x}   \int_{B(0,N^{\frac13})}  \Big| \int_{B(0,N^{\frac13})}  S_{y,z}(x')  G(y-x,z-x) dz\Big|^2 dy   dx' dx \\
& +   \int_{B(0,N^{\frac13})^4}  \int_{B_x}   \Big| \int_{B(0,N^{\frac13})}  |S_{y,z}(x')| \,    |S_{y,z'}(x')|  \, F(z-z') dx' dx dy dz dz'
\end{align*}
By Remark \ref{remark_G} and Lemma \ref{lemma_G},  $G$ belongs to $\mathcal{G}$, meaning it satisfies the assumptions \eqref{space_G} of Corollaries \ref{cor_rho_3} and \ref{cor_rho_3_bis}. 
The first term is controlled thanks to Corollary \ref{cor_rho_3_bis}, with  $n=m=2$. For the other term, we use again \eqref{crude_bound}: 
\begin{align*} 
& \int_{B(0,N^{\frac13})^4}  \int_{B_x}   \Big| \int_{B(0,N^{\frac13})}  |S_{y,z}(x')| \,    |S_{y,z'}(x')|  \, F(z-z') dx' dx dy dz dz' \\ 
\le & \int_{B(0,N^{\frac13})^4} \Big(  \frac{1}{\langle x-y \rangle^3} +   \frac{1}{\langle x-z \rangle^3}  \Big)   \frac{1}{\langle y-z \rangle^3} 
 \Big(  \frac{1}{\langle x-y \rangle^3} +   \frac{1}{\langle x-z' \rangle^3}  \Big)   \frac{1}{\langle y-z' \rangle^3} F(z-z')  dx dy dz dz' 
 \end{align*}
Expanding, one can see that this term is bounded by $C N$. First, 
\begin{align*}
& \int_{B(0,N^{\frac13})^4}   \frac{1}{\langle x-y \rangle^6}   \frac{1}{\langle y-z \rangle^3}   \frac{1}{\langle y-z' \rangle^3} F(z-z')  dx dy dz dz' \\
\le & C N \int_{\R^9}   \frac{1}{\langle Y \rangle^6}   \frac{1}{\langle Y-Z \rangle^3}   \frac{1}{\langle Y-Z' \rangle^3} F(Z-Z') dY dZ dZ'  \\ 
\le & C N  \int_{\R^9}   \frac{1}{\langle Y \rangle^6}   \frac{1}{\langle s \rangle^3}   \frac{1}{\langle s' \rangle^3} F(s-s')  ds ds' dY
\le  C' N \Big\| \Big( F \star \langle  \cdot \rangle^{-3} \Big)   \langle  \cdot \rangle^{-3}  \Big\|_{L^1} \le C'' N
\end{align*}
Also, 
\begin{align*}
& \int_{B(0,N^{\frac13})^4}   \frac{1}{\langle x-y \rangle^3}   \frac{1}{\langle y-z \rangle^3}   \frac{1}{\langle x-z' \rangle^3}   \frac{1}{\langle y-z' \rangle^3} F(z-z') dx dy dz dz' \\
\le & C N \int_{\R^9} \frac{1}{\langle Y \rangle^3}    \frac{1}{\langle Y-Z \rangle^3}  \frac{1}{\langle Z' \rangle^3}   \frac{1}{\langle Y-Z' \rangle^3} F(Z-Z') dY dZ dZ' \\
\le &  C N   \int_{\R^9}  \frac{1}{\langle Y \rangle^3}  \frac{1}{\langle s \rangle^3}  \frac{1}{\langle s'+Y \rangle^3}     \frac{1}{\langle s' \rangle^3}  F(s-s') dY ds ds'  \\
\le & C N   \int_{\R^9}  \frac{1}{\langle Y \rangle^3}  \frac{1}{\langle s \rangle^3}  \frac{1}{\langle s'+Y \rangle^3}  F(s-s') dY ds ds'  \\
\le  & C N \int_{\R^3}  \frac{1}{\langle Y \rangle^3}  \Big(  \langle  \cdot \rangle^{-3}   \star \big( F(-\cdot) \star  \langle  \cdot \rangle^{-3}  \big) \Big)(Y) dY \le C' N
\end{align*} 
Indeed, arguing as we did several times before, one can show that  
$  \langle  \cdot \rangle^{-3}   \star \big( F(-\cdot) \star  \langle  \cdot \rangle^{-3}  \big) $ belongs to $L^q$ for any $q > r$, so that the product with $\langle  \cdot \rangle^{-3}$ is an integrable function of $Y$. 
Eventually 
\begin{align*}
& \int_{B(0,N^{\frac13})^4}   \frac{1}{\langle x-z \rangle^3}   \frac{1}{\langle y-z \rangle^3}   \frac{1}{\langle x-z' \rangle^3}   \frac{1}{\langle y-z' \rangle^3} F(z-z') dx dy dz dz' \\
\le & C N \int_{\R^9} \frac{1}{\langle Z \rangle^3}    \frac{1}{\langle Y-Z \rangle^3}  \frac{1}{\langle Z' \rangle^3}   \frac{1}{\langle Y-Z' \rangle^3} F(Z-Z') dY dZ dZ' \\
\le &  C N   \int_{\R^9}  \frac{1}{\langle Z \rangle^3}  \frac{1}{\langle t \rangle^3}  \frac{1}{\langle Z' \rangle^3}   \frac{1}{\langle t + Z-Z' \rangle^3}    F(Z-Z') dt dZ dZ' \\
=   & C N \int_{\R^3}   \frac{1}{\langle Z \rangle^3}   \frac{1}{\langle Z' \rangle^3}     \Big(  \langle  \cdot \rangle^{-3} \star   \langle  \cdot \rangle^{-3} \Big)(Z-Z')  F(Z-Z')  dZ dZ'  \le C' N
\end{align*} 
Here, we used that the function  $\langle  \cdot \rangle^{-3} \star   \langle  \cdot \rangle^{-3}$ belongs to $L^q$ for any $q > 1$, so that $\tilde{F} := \Big( \langle  \cdot \rangle^{-3} \star   \langle  \cdot \rangle^{-3} \Big) F$ belongs to $L^1$. Hence, $\langle  \cdot \rangle^{-3} \star \tilde{F}$ belongs to $L^q$ for any $q > 1$, and its product with $\langle  \cdot \rangle^{-3}$ is integrable.

\mspace
The last step is to handle $K$. Note that the quantity  $D \Psi_{y,z}(x') : D(\Psi_{y',z'})(x')$ is invariant with respect to the permutations  $(y \: z)$, $(y' \: z')$ and $(y \: y') (z \:  z')$ of the set $\{y,z,y',z'\}$. We use the following result on the structure of $g_5$: 
\begin{lemma} \label{lemma_g5}
The expression $g_5(0,y,y'z,z')$ can be written as a sum of terms of four types: 
\begin{align*}
g_{\rm I}(y,y',z,z') & = G(y,z) G'(y',z')   \\
g_{\rm II}(y,y',z,z') & = G(y,z) G(y',z')  R_{\rm II}(\sigma y, \sigma y') \\  
g_{\rm III}(y,y',z,z') & = G(y,z) G(y',z')  R_{\rm III}(\sigma y,  \sigma z, \sigma z') \\
g_{\rm IV}(y,y',z,z') & = G(y,z) G'(y',z')  R_{\rm IV}(\sigma y, \sigma y', \sigma z, \sigma z')
\end{align*}
where  $G$, $G'$ change from line to line and  belong to the space $\mathcal{G}$ (see Remark \eqref{remark_G}), where $\sigma$ is a  permutation of   $\{y,z,y',z'\}$ generated by  $(y \: z)$, $(y' \: z')$ and $(y \: y') (z \:  z')$, and where for some $F \in  L^q \cap L^\infty$, with $q < \infty$: 
\begin{equation} \label{remaindersI-IV}
\begin{aligned}
|R_{\rm II}(y,y')|  & \le F(y-y')  \\
|R_{\rm III}(y,y',z)| &  \le F(y-y')    \Big( F(z-y') + F(z-y) \Big) +   F(z-y') \Big( F(y) + F(y') \Big), \\ 
|R_{\rm IV}(y,y',z,z')|  & \le F(y-y')  \Big( F(z-z') + F(y'-z) F(z') + F(z-y) F(z') + F(z) F(z') \Big) 
\end{aligned}
\end{equation} 
\end{lemma}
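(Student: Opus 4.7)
The starting point is the refined expansion \eqref{H5'}, which reads
$$g_5(0,y,y',z,z') = T_1 + T_2 - T_3 + \tilde R_4(y,y',z,z'),$$
with $T_1, T_2, T_3$ the three explicit fractions in \eqref{H5'} and $|\tilde R_4(y,y',z,z')| \le F(y-y')F(z-z')$. The remainder $\tilde R_4$ is immediately of type IV: take $G=G'=1\in\mathcal{G}$ and $R_{\rm IV} := \tilde R_4$, which matches the first summand of the permitted bound. The three principal terms then need to be decomposed and classified.

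For $T_3$, I would rewrite it as a product of four quotients,
$$T_3 = \frac{g_3(0,y,z)}{g_2(0,y)} \cdot \frac{g_3(0,y',z')}{g_2(0,y')} \cdot \frac{g_3(0,y',z)}{g_2(0,z)} \cdot \frac{g_3(0,y,z')}{g_2(0,z')},$$
(the last two denominators brought to the form $g_2(0,\cdot)$ by stationarity and symmetry of the correlation functions), and apply \eqref{H3} in the form $g_3(0,a,b)/g_2(0,a) = g_2(0,b) + R_2(a,b)$, with $|R_2|$ controlled by $F$ of the pair-differences $y-z$, $y'-z'$, $z-y'$, $z'-y$ respectively. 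Distributing yields a pure product of $g_2$'s (type I by Lemma \ref{lemma_G}) plus $15$ mixed terms. Using the identity $g_2(0,y)R_2(y,z)=(g_2(y,z)-1)g_2(0,z)+\tilde R_2(y,z)$ extracted from \eqref{H3'}, any \emph{same-block} $R_2$-factor (namely $R_2(y,z)$ or $R_2(y',z')$) can be combined with a companion $g_2(0,\cdot)$ and absorbed into a $\mathcal{G}$-element, contributing to $G$ or $G'$ in the type-I tensor factor. The remaining \emph{cross-block} $R_2$-factors $R_2(z,y')$, $R_2(z',y)$ persist as residual decay in one of the differences $\sigma y - \sigma y'$ for some $\sigma$ in the permutation group generated by $(y\:z)$, $(y'\:z')$, $(y\:y')(z\:z')$; this group preserves the partition $\{\{y,z\},\{y',z'\}\}$, so its residual differences match exactly the pattern required by types II, III, IV.

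For $T_1$ (and $T_2$, which is $T_1$ after applying $\sigma=(y\:z)(y'\:z')$), apply \eqref{H4} to each $g_4$-factor:
$$g_4(0,y,y',z) = g_3(0,y,y')\Bigl(\tfrac{g_3(0,y,z)}{g_2(0,y)} + R_3(y,y',z)\Bigr),\quad |R_3| \le F(y'-z),$$
and similarly for $g_4(0,y,y',z')$ with $|R_3'| \le F(y'-z')$. The factor $g_3(0,y,y')$ cancels the denominator of $T_1$; expanding the resulting product of two binomials, then decomposing $g_3(0,y,y')$ via \eqref{H3'} and each $g_3(0,y,\cdot)/g_2(0,y)$ via Lemma \ref{lemma_G}, every term factorizes as a $\mathcal{G}$-function of $(y,z)$ times a $\mathcal{G}$-function of $(y',z')$, multiplied by at most one residual cross-block remainder, hence falls in one of the four types.

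The main obstacle is combinatorial bookkeeping: each of the many generated terms must be matched to a specific $\sigma$ in the allowed permutation group and to exactly one admissible summand of \eqref{remaindersI-IV}. The compound bounds on $R_{\rm III}$ and $R_{\rm IV}$ are sums of products of two or three $F$-factors, so a raw remainder that decays in only one pair-difference must sometimes be multiplied by additional bounded factors drawn from \eqref{H2}--\eqref{H5}, using the uniform $L^\infty$ control on the $g_k$'s, to hit one of those summands without leaving $\mathcal{G}$. Once this accounting is organized, the classification into types I--IV is entirely mechanical.
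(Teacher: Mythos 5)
Your skeleton coincides with the paper's: start from \eqref{H5'}, put $\tilde R_4$ in type IV, factor the third term into four quotients and expand them via \eqref{H3}, and reduce the first two terms to a single one using the permutation $(y\,z)(y'\,z')$. The treatment of your $T_3$ is essentially the paper's. The gap is in the treatment of $T_1$. Writing
$$T_1 = g_3(0,y,y')\Bigl(\tfrac{g_3(0,y,z)}{g_2(0,y)}+R_3(y,y',z)\Bigr)\Bigl(\tfrac{g_3(0,y,z')}{g_2(0,y)}+R_3(y,y',z')\Bigr)$$
and expanding produces cross terms such as $g_3(0,y,y')\,\tfrac{g_3(0,y,z)}{g_2(0,y)}\,R_3(y,y',z')$. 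Even after choosing the arguments of \eqref{H4} so that the remainder decays in a cross-block difference (note that as you stated it, $|R_3'|\le F(y'-z')$ is a \emph{same-block} decay, which no summand of \eqref{remaindersI-IV} accepts at all), such a term is $G(y,z)G'(y',z')$ times a function of \emph{three} variables bounded by a \emph{single} factor $F(\sigma y-\sigma y')$. That fits none of the four types: type II requires a genuinely two-variable remainder, and every summand of the bounds on $R_{\rm III}$ and $R_{\rm IV}$ is a product of at least two decay factors. Your proposed remedy --- multiplying by bounded factors drawn from the $g_k$'s so as to ``hit one of those summands'' --- cannot work: boundedness does not manufacture the missing second decay factor.

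This is precisely the point where the paper does something your proposal omits. It splits the symmetric term $T_2$ by a cutoff $\chi(z)\chi(z')\chi(z-z')$ supported where $g_3(0,z,z')\ge\tfrac12$. On the complement, the factor $1-\chi(z)\chi(z')\chi(z-z')\le F(z)+F(z')+F(z-z')$ (with $F$ compactly supported) supplies exactly the second decay factor that turns the single-decay cross terms left over from \eqref{H4} into terms of type III. On the support of $\chi\chi\chi$ the quotient is inverted directly and the numerator is decomposed via \eqref{H4'} rather than \eqref{H4}; the remainder $\tilde R_3$ there carries the product bound $F(y)F(z-z')$ and so lands in the admissible types. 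Without this two-region argument, or some equivalent source of a second decay factor, the classification of the cross terms arising from $T_1$ and $T_2$ fails, so the proof as proposed is incomplete.
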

This lemma will be proved in Appendix \ref{appendix_C}. Pondering on this decomposition, it is enough to control terms of type $K_{\rm I}$ to $K_{\rm IV}$, obtained from $K$ by replacing $g_5$ by a function of the form $g_{\rm I}$ to $g_{\rm IV}$. 
As regards $K_{\rm I}$, we can simply apply Corollary  \ref{cor_rho_3}. Indeed, due to the structure of $g_{\rm I}$, we can bound $K_{I}$ by a sum of terms of the form 
\begin{align*}
& C \Big|  \int_{B(0,N^{\frac13})^5}   \int_{B_x}   S_{y,z}(x')   G(y-x,z-x) : S_{y,z}(x') G'(y'-x,z'-x)  dy dy' dz dz' dx' dx \Big|   
\end{align*}
with  $G,G' \in \mathcal{G}$. 
Applying  Corollary  \ref{cor_rho_3} with $m=2$, 
\begin{align*}
 & \Big|  \int_{B(0,N^{\frac13})^5}   \int_{B_x}   S_{y,z}(x')   G(y-x,z-x)  : S_{y,z}(x')  G'(y'-x,z'-x)   dy dy' dz dz' dx' dx \Big|  \\
\le & \frac{1}{2}    \int_{B(0,N^{\frac13})}  \int_{B_x}  \Big|  \int_{B(0,N^{\frac13})^2}   S_{y,z}(x')   G(y-x,z-x) dy dz  \Big|^2 dx' dx \\
+  &  \frac{1}{2} \int_{B(0,N^{\frac13})}  \int_{B_x}  \Big|  \int_{B(0,N^{\frac13})^2}   S_{y',z'}(x')  G'(y'-x,z'-x)  dy' dz'  \Big|^2 dx' dx 
\le C N. 
\end{align*}
resulting in 
$$ K_{\rm I} \le C N. $$
As regards $K_{\rm IV}$, using the invariance by permutation mentioned above, it is enough to handle the case $\sigma = I_{\rm d}$,  that  is $g_{\rm IV}(y,y',z,z')  = R_{\rm IV}(y, y',z,z')$, where $R_{\rm IV}$ obeys the bound in \eqref{remaindersI-IV}. 
For the sake of brevity, we restrict to the treatment of the first term at the  r.h.s. The other ones can be treated with very similar arguments and are shown  to be $o(N)$. Hence, we shall bound
\begin{align*} 
K'_{\rm IV} & :=   \int_{B(0,N^{\frac13})^5} \int_{B_x}  |S_{y,z}(x')|  \,   |S_{y',z'}(x')| F(y-y') F(z-z') dy dy' dz dz'  dx  \\
\end{align*}
Using  inequality \eqref{crude_bound}, one checks easily  that the function 
$$  M_x : (y,z) \rightarrow   |S_{y,z}(x)|  $$
satisfies 
\begin{equation} \label{integrability_Psi}
 \forall r > 1, \quad \exists C_r > 0,  \quad \sup_{x \in \R^3} \| M_{x} \|_{L^r(\R^6)} \le C_r.
 \end{equation}
 Back to $K'_{\rm IV}$, we notice that 
\begin{align*} 
|K'_{\rm IV}| & \le  C  N \sup_{x \in \R^3} \big\| M_{x} \big( G \star M_{x} \big) \big\|_{L^1(\R^6)} 
 \end{align*}
 where $G$ is the function of the couple $(y,z)$ defined by $G(y,z) = F(y) F(z)$. From \eqref{integrability_Psi} and the fact that $G$ belongs to $L^q(\R^6)$ as $F$ belongs to $L^q(\R^3)$, we deduce that  the $\sup$ at the right-hand side is finite, so that eventually. 
 $$  |K'_{\rm IV}|  \le  C  N, \quad \text{and} \quad  |K_{\rm IV}|  \le  C  N  $$
 It remains to treat $K_{\rm II}$ and $K_{\rm III}$. Again, one can restrict to the case where  $\sigma =  I_{\rm d}$. 
 A keypoint is played here by Corollary \ref{cor_rho_3_bis}, which states that for any $G \in \mathcal{G}$, 
$$ S_G(x',x,y)  := \int_{B(0,N^{\frac13})}  S_{y,z}(x') G(y-x,z-x) dz $$
satisfies for all $n \ge m > 1$, 
\begin{equation} \label{estimate_Sk_1}
  \int_{B(0,N^{\frac13})} \int_{B_x}   \|1_{B(0,N^{\frac13})} S_G(x',x,\cdot)\|_{L^m(\R^3)}^n dx' dx  \le C N 
\end{equation}
Due to the structure of the term $R_{II}$,  depending only on $y,y'$, we find that $K_{\rm II}$ is bounded by terms of the form
\begin{align*}
& C \int_{B(0,N^{\frac13})} \int_{B_x}  \int_{B(0,N^{\frac13})^2}    \big| S_G(x',x,y) \big| \:   \big| S_{G'}(x',x,y') \big| F(y-y') dy dy' dx' dx \\
\le & C \int_{B(0,N^{\frac13})}  \int_{B_x}  \|1_{B(0,N^{\frac13})} \, S_G(x',x,\cdot)\|_{L^{m}(\R^3)}  \| 1_{B(0,N^{\frac13})}  S_{G'}(x',x,\cdot) \star  F \|_{L^{m'}(\R^3)} dx' dx   \\
\le & C \int_{B(0,N^{\frac13})}  \int_{B_x} \|1_{B(0,N^{\frac13})} \,  S_G(x',x,\cdot)\|_{L^{m}(\R^3)}  \| 1_{B(0,N^{\frac13})}   S_{G'}(x',x,\cdot)\|_{L^s(\R^3)} \|F \|_{L^q(\R^3)}  dx' dx \\
\le & C' \int_{B(0,N^{\frac13})}  \int_{B_x} \|1_{B(0,N^{\frac13})} \, S_G(x',x,\cdot)\|_{L^{m}(\R^3)}  \| 1_{B(0,N^{\frac13})} S_{G'}(x',x,\cdot)\|_{L^s(\R^3)}dx' dx
\end{align*}  
for all  $m > 1$ and $s$ such that $\frac{1}{s} + \frac{1}{q} = 1 + \frac{1}{m'}$. As $s < m'$, we can pick an index $n$ such that $n \ge m$ and $n' \ge s$. By H\"older inequality with exponents $n$ and $n'$, and by inequality \eqref{estimate_Sk_1}, we conclude that
\begin{align*}
& \int_{B(0,N^{\frac13})} \int_{B_x}  \int_{B(0,N^{\frac13})^2}    \big| S_G(x',x,y) \big| \:   \big| S_{G'}(x',x,y')  \big| F(y-y') dy dy' dx' dx 
\le   CN 
\end{align*} 
so that eventually 
$$ K_{\rm II} \le C N. $$
As regards $K_{\rm III}$,  it is bounded by  
\begin{align*}
& C  \int_{B(0,N^{\frac13})} \int_{B_x}  \int_{B(0,N^{\frac13})^3}    |S_{y,z}(x')|  \,      \big| S_{G'}(x',x,y')\big| \, F(y-y') F(z-y')      dy dy' dz dx' dx  \\
+ & \: C \int_{B(0,N^{\frac13})} \int_{B_x}  \int_{B(0,N^{\frac13})^3}    |S_{y,z}(x')|  \,     \big|S_{G'}(x',x,y')\big| \, F(y-y') F(z-y)      dy dy' dz dx' dx \\
+ & \: C \int_{B(0,N^{\frac13})} \int_{B_x}  \int_{B(0,N^{\frac13})^3}    |S_{y,z}(x')|  \,    \big|S_{G'}(x',x,y')\big| \, F(z-y') F(y)      dy dy' dz dx' dx \\
+  & \: C \int_{B(0,N^{\frac13})} \int_{B_x}  \int_{B(0,N^{\frac13})^3}    |S_{y,z}(x')|  \,    \big|S_{G'}(x',x,y')\big| \, F(z-y') F(y')      dy dy' dz dx' dx \\
 =: & K_{\rm III}'+  K_{\rm III}'' +  K_{\rm III}''' + K_{\rm III}''''
\end{align*}
We shall focus on the first one for brevity, the others are very similar. We find 
\begin{align*}
K_{\rm III}' \le & 
C'  \int_{B(0,N^{\frac13})} \int_{B_x}  \int_{B(0,N^{\frac13})^3}    \Big( \frac{1}{\langle x'-y \rangle^3} +   \frac{1}{\langle x'-z\rangle^3} \Big)  \\
& \hspace{3cm}  \frac{1}{\langle y-z\rangle^{3}}  \big| S_{G'}(x',x,y')\big| \, F(y-y') F(z-y')      dy dy' dz dx' dx \\
\le & C'  \int_{B(0,N^{\frac13})} \int_{B_x}  \int_{B(0,N^{\frac13})}   \big|S_{G'}(x',x,y')\big|  \, F_N(x',y') dy' dx' dx 
\end{align*}
 where 
 \begin{align*}
 F_N(x',y') & :=   \int_{B(0,N^{\frac13})^2}   \Big( \frac{1}{\langle x'-y \rangle^3} +   \frac{1}{\langle x'-z\rangle^3} \Big)    \frac{1}{\langle y-z\rangle^{3}} F(y-y') F(z-y')      dy  dz \\
 & \le \int_{\R^6}   \Big( \frac{1}{\langle x'-y \rangle^3} +   \frac{1}{\langle x'-z\rangle^3} \Big)    \frac{1}{\langle y-z\rangle^{3}} F(y-y') F(z-y')      dy  dz\\
 & \le \int_{\R^6}   \Big( \frac{1}{\langle x - y' - Y \rangle^3} +   \frac{1}{\langle x - y' - Z\rangle^3} \Big)   \frac{1}{\langle Y-Z\rangle^{3}} F(Y) F(Z)   dY  dZ \\
 & \le  2 \, \tilde{F}(x-y'), \quad  \tilde{F} := \Big(F F \star \langle \cdot \rangle^{-3} \Big) \star  \langle \cdot \rangle^{-3}     \in L^r(\R^3) \: \forall r > q.  
\end{align*}
Choosing $m > 1$ such that $m' = r$, we deduce
\begin{align*}
C & \int_{B(0,N^{\frac13})} \int_{B_x}  \int_{B(0,N^{\frac13})^3}    |S_{y,z}(x')|  \,  \,    \big| S_{G'}(x',x,y')\big| \, F(y-y') F(z-y')      dy dy' dz dx' dx \\
\le &  C'  \left(  \int_{B(0,N^{\frac13})} \int_{B_x}  \int_{B(0,N^{\frac13})}   \big| S_{G'}(x',x,y')\big|^m dy' dx' dx \right)^{\frac{1}{m}} 
 \left(  \int_{B(0,N^{\frac13})} \int_{B_x}  \int_{B(0,N^{\frac13})}   \big| \tilde{F}(x'-y')\big|^r dy' dx' dx \right)^{\frac{1}{r}}  \\
\le & C'' N^{\frac{1}{m}} N^{\frac{1}{r}} = C'' N. 
\end{align*}
Eventually, we find 
$$ K_{\rm I} + K_{\rm II} +   K_{\rm III} +   K_{\rm IV}  \le C N $$
which concludes the proof of the Proposition. 

\subsection{Proof of auxiliary Proposition \ref{prop3}}
 We only sketch the proof, as it is much simpler than the one of Proposition \ref{prop2}, although in the same spirit. We want a $O(\phi^2 N)$ bound on  
 \begin{align*}
&  \E  \int_{\cup_{i \in I_N} B_i} |D (\Phi_{I_N} - \sum_{k \in I_N} \Phi_{\{k\}}) |^2   =    \E  \sum_{i \in I_N}  \int_{B_i} |\sum_{k \in I_N, k \neq i} D \Phi_{\{k\}}) |^2 
\\
 = & \E  \sum_{k \neq i \in I_N}   \int_{B_i}    |D \Phi_{\{k\}}|^2 \: + \: \E  \sum_{k \neq k' \neq i \in I_N}   \int_{B_i}    D \Phi_{\{k\}} : D \Phi_{\{k'\}} \\
 = & \phi^2  \int_{B(0,N^{\frac13})} \int_{B_x}  \int_{B(0,N^{\frac13})} |D \Phi_y(x')|^2 g_2(x,y) dy dx' dx  \\ 
 + & \phi^3  \int_{B(0,N^{\frac13})} \int_{B_x}  \int_{B(0,N^{\frac13})^2} D \Phi_y(x') : D \Phi_{y'}(x') g_3(x,y,y') dy dy' dx' dx \: =: \: \phi^2 I_1 +\phi^ 3 I_2.    
 \end{align*} 
  We remind that the one-sphere solution $\Phi_y$ satisfies \eqref{corrector_y}, and that $D \Phi_y(x) = \mM_0(x-y) S$, {\it cf.} \eqref{def_mM0}. 
 The first term is simply bounded by 
 $$ |I_1| \le C N \sup_{x'}  \int_{\R^3} \frac{1}{\langle x'-y\rangle^6} dy \le C N. $$
 For the second term, we use   \eqref{H3} to write (remind $g_2(x,y) = g(x-y))$:
 $$g_3(x,y,y') = g(x-y) g(x-y') + R_2(x,y,y'), \quad  |R_2(x,y,y')| \le F(y-y'), \quad F \in L^q \cap L^\infty.$$
 Hence, 
 \begin{align*}
 I_2 & \le C   \int_{B(0,N^{\frac13})} \int_{B_x} \Big|   \int_{B(0,N^{\frac13})} \chi_0(x'-y) \mM_0(x'-y) S g(x-y) dy \Big|^2  dx' dx \\
 & + C   \int_{B(0,N^{\frac13})} \int_{B_x}  \int_{B(0,N^{\frac13})^2} |D \Phi_y(x')|  \, |D \Phi_{y'}(x')| F(y-y') dy dy' dx' dx := I_{2,a} + I_{2,b}
 \end{align*}
 Uniformly in $x$, we find that 
\begin{align*}
&  \int_{B_x}  \int_{B(0,N^{\frac13})^2} |D \Phi_y(x')|  \, |D \Phi_{y'}(x')| F(y-y') dy dy' dx' 
 \le & C  \int_{\R^6}  \frac{1}{\langle y \rangle^3 \langle y' \rangle^3 }F(y-y') dy dy' < +\infty
 \end{align*}
 so that $I_{2,b} \le C N$.  As regards $I_{2,a}$, we decompose $\chi_0 \mM_0(x'-y)  = (\chi_0 \mM_0(x'-y) - \chi_0 \mM_0(x-y)) +  \chi_0 \mM_0(x-y)$   resulting in 
 \begin{align*}
I_{2,a}& \le C   \int_{\R^3} \Big|   \int_{B(0,N^{\frac13})} \frac{1}{\langle x-y \rangle^4} dy  \Big|^2 dx  +  C  \int_{\R^3} \Big|   \int_{B(0,N^{\frac13})} \chi_0 \mM_0(x-y) dy  \Big|^2 dx \\
& \le  C \| \langle  \rangle^{-4} \star 1_{B(0,N^{\frac13})} \|_{L^2}^2 + C \| \mM_0 \star 1_{B(0,N^{\frac13})} \|_{L^2}^2  \le C N, 
 \end{align*}
 using that convolution with $\langle  \rangle^{-4}$ or  $\chi_0 \mM_0$ is continuous over $L^2$. This ends the proof. 
 
 \section*{Acknowledgements}
 The author acknowledges the support of  the Institut Universitaire de France, and of the French National Research Agency (ANR) through the SingFlows project, grant ANR-18- CE40-0027.

\appendix
\section{Proof of Proposition \ref{approx_effective_viscosity}} \label{appA}
We remind that  
$$\mu_h S : S = \E |D \phi_S + S|^2 = |S|^2 +   \E |D \phi_S|^2$$
 as $\E D \Phi_S = 0$. We admit temporarily that 
\begin{equation} \label{bigcell_limit}
\E |D \phi_S|^2 =  \lim_{N \rightarrow +\infty} \E  \frac{1}{|B(0,N^{\frac13})|} \int_{\R^3} |D u_{I_N,S} - S|^2 
\end{equation}
Admitting this claim, 
\begin{align*} 
 \int_{\R^3} |D u_{I_N,S} - S|^2  & =  \int_{\cup B_i} |D u_{I_N,S} - S|^2 +  \int_{\R^3 \setminus \cup B_i} |D u_{I_N,S} - S|^2 \\ 
 & = |\cup B_i| \, |S|^2 -  \frac{1}{2} \int_{\cup \pa B_i}  \sigma( u_{I_N,S} - Sx, p_{I_{N,S}})n \cdot (u_{I_N,S} - Sx)   \\
 & =  |\cup B_i| \, |S|^2 - \frac{1}{2}  \int_{\cup \pa B_i}  \sigma(u_{I_N,S}, p_{I_N,s})n \cdot  u_{I_N,S}  +   \int_{\cup \pa B_i} S n \cdot u_{I_N,S}  \\
 & + \frac{1}{2}\int_{\cup \pa B_i}    \sigma(u_{I_N,S}, p_{I_N,s})n \cdot Sx -  \int_{\cup \pa B_i} S n \cdot Sx 
 \end{align*}
As $u_{I_N, S}$ is a rigid vector field in $B_i$, the second term is zero by the fourth and fifth lines of \eqref{stokes}, and the third term is zero as well.  The first and last terms compensate, and we are left with 
\begin{align*} 
 \int_{\R^3} |D u_{I_N,S} - S|^2  & =   \frac{1}{2}\int_{\cup \pa B_i}    \sigma(u_{I_N,S}, p_{I_N,s})n \cdot Sn \\
 & =  \frac{1}{2} \sum_{i \in I_N}  \int_{\pa B_i} \big( \sigma(u_{I_N,S}, p_{I_N,S})n - 2 u_{I_N,S}  \big)   \cdot Sn 
 \end{align*}
 where we added artificially the term $u_{I_N,S} \cdot Sn$ which has zero integral at the boundary, again because $u_{I_N,S}$ is rigid. Back to the expression of $\mu_h S : S$, we find the formula stated in  Proposition \ref{approx_effective_viscosity}. 
 
 \mspace
 It remains to understand formula  \eqref{bigcell_limit}. This kind of formula is now classical in homogenization theory (see for instance \cite{MR2044813}): one recovers the homogenized matrix, given by the corrector problem set in $\R^3$, through approximations of this cell problem on larger and larger finite domains (here $B(0,N^{\frac13})$).  The conditions set at the boundary do not really matter: one can use periodicity conditions, Dirichlet conditions, or like here, extension by a homogeneous Stokes solution outside $B(0,N^{\frac13})$. The case of Dirichlet conditions is covered in \cite{DuerinckxGloria}. For the setting considered here, we may rely on the previous work \cite{GVH}, where a full treatment is given for a kind of point approximation of system \eqref{corrector}. We only give below the  sketch of the proof, and  refer to \cite[Propositions 5.2 and 5.3]{GVH} for details. 
 
\mspace
Let $\eps := N^{-\frac13}$, $v_{\eps}(x) := \eps u_{I_N,S}\big(\frac{x}{\eps}\big) -  Sx$, $B_{i,\eps} :=  B(\eps x_i, \eps) \subset B(0,1)$. One has 
\begin{equation} \label{stokes_veps}
\begin{aligned}
 -\Delta v_\eps + \na p_\eps = 0, \quad \div v_\eps & = 0   \quad \text{in } \R^3 \setminus (\cup B_{i, \eps}) \\
 D(v_\eps) + S & = 0 \quad \text{in } \cup B_{i, \eps}, \\
 \int_{\pa B_{i,\eps}} \sigma(v_\eps,p_\eps)n & = \int_{\pa B_{i,\eps}} \sigma(v_\eps,p_\eps)n  \times x = 0 \quad \forall i
\end{aligned}
\end{equation}
Moreover, formula  \eqref{bigcell_limit} is equivalent to 
\begin{equation}
 \E |D \phi_S|^2 = \lim_{\eps \rightarrow 0} \frac{1}{|B(0,1)|} \E  \int_{\R^3}  |D v_\eps|^2.
\end{equation} 
Under assumption \eqref{H1}, it is standard to show that the solution $v_\eps$ of  \eqref{stokes_veps} is bounded in $\dot{H}^1$ uniformly in the realization of the point process. Hence, by the dominated convergence theorem, it is enough to prove that 
$$ \E |D \phi_S|^2 = \lim_{\eps \rightarrow 0} \frac{1}{|B(0,1)|}  \int_{\R^3}  |D v_\eps|^2 \quad \text{almost surely}. $$
To prove this property, one  introduces an approximation $(\overline{v}_{\eps}, \overline{p}_\eps$ defined by the following conditions:  $\overline{v}_{\eps} \in \dot{H}^1(\R^3)$, 
\begin{align*}
\overline{v}_{\eps}(x) & = \eps \Phi_S\big(\frac{x}{\eps}\big) - \dashint_{B(0,1)} \eps \Phi_S\big(\frac{\cdot}{\eps}\big) , \quad x \in B(0,1) \\
\overline{p}_{\eps}(x) &= P_S\big(\frac{x}{\eps}\big) - \dashint_{B(0,1)}  P_S\big(\frac{\cdot}{\eps}\big) , \quad x \in B(0,1) \\
-\Delta \overline{v}_{\eps} + \na \overline{p}_\eps & = 0, \quad \div \overline{v}_{\eps}  = 0 \quad \text{ outside } B(0,1). 
\end{align*}
The idea is then to show that 
$$ \E |D \phi_S|^2 = \lim_{\eps \rightarrow 0}   \frac{1}{|B(0,1)|} \int_{\R^3}  |D \overline{v}_\eps|^2 \quad \text{almost surely} $$
and that 
$$ \lim_{\eps \rightarrow 0}  \int_{\R^3}  |D (v_\eps - \overline{v}_\eps)|^2 = 0.$$
The first, resp. second property, is the analogue of Proposition 2, resp. Proposition 3 in \cite{GVH} (compare $v_\eps$ and $\overline{v}_\eps$ to $\eps^3 h_\eps^\eta$ and $\eps^3 \overline{h}_\eps^\eta$ in \cite{GVH}). Adapting these propositions to the current setting requires minor changes, that are left to the reader.   
\section{Calderon-Zygmund theorem} \label{appendix_CZ}
We use a version of the Calderon-Zygmund theorem given in  \cite{Met}.

\begin{theorem} \label{CZ_thm}
 Let $f \in C^1(\R^n\setminus\{0\})$ satisfying  
\begin{itemize}
\item[i)] $\forall |\alpha| \le 1,  \exists C_\alpha > 0,  \quad |\pa_\alpha f(x)| \le \frac{C_\alpha}{|x|^{n+|\alpha|}} \quad \forall x \neq 0$.
\item[ii)] There exists $ C > 0$ such that for all $0 < t < t' < +\infty$, $\: \Big| \int_{t < |x| < t'} f \Big| \le C$.
\end{itemize}
Let $\theta \in C^\infty_c(\R^n)$ that is $1$ near the origin, and let $T_\theta$  the distribution defined by 
$$  \langle T_\theta , \varphi \rangle = \int_{R^n}  f(x) \left(  \varphi(x) - \varphi(0) \theta(x) \right) dx, \quad \forall \varphi \in C^\infty_c(\R^n).   $$
Then, the operator $\varphi \rightarrow T_\theta \star \varphi$ extends as a continuous operator from $L^p(\R^n)$ to itself for all $1 < p < \infty$.
 \end{theorem}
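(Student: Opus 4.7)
The plan is to invoke the classical Calderón--Zygmund program for singular integrals with a regular kernel: establish $L^2$ boundedness by Fourier analysis, upgrade to weak-$(1,1)$ via the Calderón--Zygmund decomposition using a H\"ormander regularity condition, and then conclude by Marcinkiewicz interpolation together with duality to cover the whole range $1<p<\infty$. First I would check that $T_\theta$ is a well-defined tempered distribution: near $0$, $|\varphi(x)-\varphi(0)\theta(x)|\lesssim |x|$ on $\mathrm{supp}\,\theta$ tames the $|x|^{-n}$ singularity of $f$, while at infinity $1-\theta\equiv 1$ and $\varphi$ is Schwartz so the integrand is integrable. By its definition $T_\theta$ is independent of $\theta$ only up to an additive multiple of $\delta_0$, which only shifts the convolution operator by a constant multiple of the identity.

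For the $L^2$ bound I would approximate $T_\theta$ by truncations $f_\varepsilon := f\,\mathbf{1}_{\varepsilon<|\cdot|<1/\varepsilon}$ and estimate $\widehat{f_\varepsilon}(\xi)$ uniformly in $\varepsilon$ and $\xi$ by splitting the integral along the surface $|x|\,|\xi|=1$: on the region $|x|\,|\xi|\le 1$, use hypothesis (ii) together with a Taylor expansion of $e^{-ix\cdot\xi}-1$ (the constant term is annihilated by the annular cancellation); on the region $|x|\,|\xi|\ge 1$, integrate by parts once using $|\nabla f(x)|\le C|x|^{-n-1}$ from (i), which gains a factor $1/|\xi|$ matching the worst region. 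Both pieces contribute $O(1)$, so $\|\widehat{T_\theta}\|_\infty<\infty$, and Plancherel gives $L^2$ continuity.

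Next, hypothesis (i) applied with $|\alpha|=1$ directly gives the H\"ormander condition
\[
\int_{|x|>2|y|}|f(x-y)-f(x)|\,dx \le C\int_{|x|>2|y|}\frac{|y|}{|x|^{n+1}}\,dx \le C',
\]
uniformly in $y\in\R^n$. I would apply the Calder\'on--Zygmund decomposition of $\varphi\in L^1$ at height $\lambda>0$ into a good part $g$ (with $\|g\|_\infty\le C\lambda$ and $\|g\|_1\le\|\varphi\|_1$) and a bad part $\sum_Q b_Q$ supported on Whitney cubes with zero mean, and then control $T_\theta\star g$ via the $L^2$ bound and $T_\theta\star b_Q$ outside the doubled cubes $2Q$ via the H\"ormander condition. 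This yields the weak-$(1,1)$ bound $|\{|T_\theta\star\varphi|>\lambda\}|\le C\|\varphi\|_1/\lambda$. Marcinkiewicz interpolation between weak-$(1,1)$ and strong-$(2,2)$ then covers $1<p\le 2$, and $2\le p<\infty$ follows by duality, since the adjoint of $\varphi\mapsto T_\theta\star\varphi$ is convolution with the distribution associated to the reversed kernel $\check f(x):=f(-x)$, which satisfies the same hypotheses (i) and (ii) and is therefore $L^{p'}$-bounded.

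The main obstacle is the $L^2$ step: without a Fourier-multiplier assumption built in, one must skillfully combine the \emph{annular cancellation} hypothesis (ii) in the low-frequency regime with the \emph{regularity} hypothesis (i) in the high-frequency regime to obtain a uniform $L^\infty$ bound on the Fourier transforms of the truncations, and ensure that $f_\varepsilon \to T_\theta$ holds in $\mathcal{S}'$ modulo a harmless constant, so that the uniform frequency-side bound transfers to $\widehat{T_\theta}$. Once this is in place, Steps 2 and 3 follow the standard Calder\'on--Zygmund machinery.
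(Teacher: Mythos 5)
Your proposal is the classical Calder\'on--Zygmund argument ($L^2$ bound via uniform Fourier estimates on truncated kernels combining the cancellation hypothesis (ii) at low frequency with the gradient bound from (i) at high frequency, then H\"ormander condition, CZ decomposition for weak-$(1,1)$, Marcinkiewicz interpolation and duality), and it is correct, including the two delicate points you flag: the convergence $f_\eps \to T_\theta$ in $\mathcal{S}'$ modulo a multiple of $\delta_0$, and the transfer of the uniform multiplier bound. The paper gives no proof of this theorem --- it simply cites the reference --- and the cited proof is essentially the one you describe, so there is nothing to compare beyond noting that your sketch matches the standard route.
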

In particular, it follows that for $\chi_0$ smooth, $\chi_0 = 0$  near the origin, $\chi_0 = 1$ far away, and for $\mathcal{M}_0$ given in \eqref{def_mM0}, the convolution operator $\varphi \rightarrow [\chi_0 \mathcal{M}_0] \star \varphi$ sends continuously  $L^p(\R^n)$ to itself for all $1 < p < \infty$ : indeed, it satisfies i) and ii), and coincides with $T_\theta$ if $\theta$ is chosen with support in the region where $\chi_0$ vanishes. The fact that it satisfies ii) is deduced easily from the property 
$$ \forall 0 < t < t' <\infty, \quad  \int_{t < |x| < t'} \mM = 0, $$
where $\mM$ is the homogeneous part of degree $-3$ in $\mM_0$.  This last identity comes from Green's formula, as  $\mM$ can be written   as a combination of derivatives of functions homogeneous of degree $-2$.

\section{Proof of Lemma \ref{lemma_g5}} \label{appendix_C}
The basic idea behind the lemma is to have a splitting of $g_5$ into factors that depend only either on $(y,z)$ or on $(y',z')$, and belong to the space $\mathcal{G}$, see  \eqref{space_G}. This is of course not fully possible because of the remainders  in  assumptions \eqref{H2}-\eqref{H5} or \eqref{H3'}-\eqref{H5'}, responsible for terms of type II to IV.   We start from the decomposition of $g_5$ given by \eqref{H5'}. The remainder in this decomposition is of type IV. As regards the third term, we simply write :
\begin{align*}
& \frac{g_3(0,y,z)  g_3(0,y',z)  g_3(0,y,z')  g_3(0,y',z')}{g_2(0,y) g_2(0,y') g_2(0,z) g_2(0,z')} = \frac{g_3(0,y,z)}{g_2(0,y)} \frac{g_3(0,y',z')}{g_2(0,y')} \frac{g_3(0,y',z)}{g_2(0,z)} \frac{g_3(0,y,z') }{g_2(0,z')}   \\
= &  \frac{g_3(0,y,z)}{g_2(0,y)} \frac{g_3(0,y',z')}{g_2(0,y')}  \big( g_2(0,y') + R_2(y',z) \big)    \big( g_2(0,y) + R_2(y,z') \big)   
\end{align*}
where the last equality comes from the first line of \eqref{H3}. By Lemma \ref{lemma_G}, expanding this last expression yields a sum of terms of type I, II and IV.  
Back to \eqref{H5'}, it remains to handle the first two terms at the right-hand side. By symmetry, it is enough to handle the second term. There exists $R > 0$ such that for $|z| \ge R$, $z' \ge R$ and $|z-z'| \ge R$, $g_3(0,z,z') \ge \frac{1}{2}$. Let $\chi$ a smooth function with values in $[0,1]$ such that $\chi = 0$ on  $B(0,R)$ and $\chi=1$ in the large. We split 
\begin{align*}
\frac{g_4(0,y,z,z') g_4(0,y',z,z')}{g_3(0,z,z')}  & \: = \:  \big( 1 - \chi(z) \chi(z') \chi(z-z')  \big)  \frac{g_4(0,y,z,z') g_4(0,y',z,z')}{g_3(0,z,z')} \\ 
 & \: + \:  \chi(z) \chi(z') \chi(z-z') \frac{g_4(0,y,z,z') g_4(0,y',z,z')}{g_3(0,z,z')}  =: A_{1-\chi} + A_\chi. 
\end{align*}
To handle $A_{1-\chi}$, it is enough to use decompositions given by  \eqref{H3}-\eqref{H4}. Hence, 
\begin{align*}
& \frac{g_4(0,y,z,z') g_4(0,y',z,z')}{g_3(0,z,z')}   =      \Big( \frac{g_3(0,y,z) g_3(0,z,z')}{g_2(0,z)} +  R_3(z,y,z') \Big) \Big( \frac{g_3(0,y',z') }{g_2(0,z')} + R_3(z',y',z) \Big)  \\
  =  &    \Big( g_3(0,y,z)  \big( g_2(0,z') + R_2(z,z')) +  R_3(z,y,z') \Big)  \Big( \frac{g_3(0,y',z') }{g_2(0,z')} + R_3(z',y',z) \Big)  
 \end{align*}
where 
$$ | R_2(z,z'))| \le F(z-z'), \quad |R_3(z,y,z')| \le F(y-z'). $$
By expanding, we find that $\frac{g_4(0,y,z,z') g_4(0,y',z,z')}{g_3(0,z,z')}$ is a sum of terms of type I,  type  II (with factor $R_2(z,z')$), type III (that is $R_2(z,z') R_3(z',y',z)$), type IV  (that is $R_3(y',y,z')  R_3(z',y',z)$), as well as two terms of the form 
$$ G(y,z)  G'(y',z')  R_3(z,y,z')  \quad \text{ and } \quad   G(y,z)  G'(y',z')  R_3(z',y',z), \quad G, G_2 \in \mathcal{G}. $$
 Multiplying by $\big( 1 - \chi(z) \chi(z') \chi(z-z')  \big)$ the terms of type I to IV give terms of the same kind, while the last two terms give terms of type III, taking into account that 
 $$  \big( 1 - \chi(z) \chi(z') \chi(z-z')  \big) \le F(z) + F(z') + F(z-z') $$
 for some $F$ compactly supported. Hence,  $A_{1-\chi}$ has the right structure. 
 
 \mspace
It remains to treat$A_{\chi}$.  By \eqref{H3}, 
$$ g_3(0,z,z') =  g_2(0,z) \big( g_2(0,z') + R_2(z,z') \big), \quad |R_2(z,z')| \le F_2(z-z').  $$
By taking $R$ large enough, we can further assume that $\frac{R_2(z,z')}{g_2(0,z')} \le \frac{1}{2}$ over the support of 
 $\chi(z) \chi(z') \chi(z-z')$. Finally, 
 \begin{equation} \label{decompo_chi}
 \begin{aligned}
  \frac{\chi(z) \chi(z') \chi(z-z')}{g_3(0,z,z')} & = \frac{\chi(z)}{g_2(0,z)} \frac{\chi(z')}{g_2(0,z')} \chi(z-z')(1 + R'_2(z,z')),   \quad |R'_2(z,z')| \le F(z-z')
  \end{aligned}
  \end{equation}
As regards the other factor in $A_\chi$, that is $g_4(0,y,z,z') g_4(0,y',z,z')$, we use \eqref{H4'}, then \eqref{H3}-\eqref{H3'}, then \eqref{H2} to separate as much as possible  terms in $(y,z)$ from terms in $(y',z')$. For instance,
\begin{align*}
& g_4(0,y,z,z')  \\
= &   \frac{g_3(0,y,z) g_3(0,y,z')}{g_2(0,y)} +  \frac{g_3(0,z,z') g_3(y,z,z')}{g_2(z,z')}  - g_2(0,z) g_2(y,z) g_2(0,z') g_2(y,z')  +  \tilde{R}_3(y,z,z') \\
 = & g_3(0,y,z) \big(g_2(0,z') + R_2(y,z') \big) \\
 + & \frac{1}{g_2(z,z')}  g_2(0,z) \big(g_2(0,z') + R_2(z,z') \big) \big(  g_2(y,z) g_2(y,z') +  (g_2(z,z') - 1) g_2(y,z')  + \tilde{R}_2(z-y,z'-y) \big) \\
 - & g_2(0,z) g_2(y,z) g_2(0,z')  g_2(y,z') + \tilde{R}_3(y,z,z') 
 \end{align*}
 and eventually 
 \begin{align*}
 g_4(0,y,z,z')   = &  g_3(0,y,z) \big(g_2(0,z') + R_2(y,z') \big) \\
 + & \frac{1}{g_2(z,z')}  g_2(0,z)  \big(g_2(0,z') + R_2(z,z') \big) \\
 &  \times \Big(  (g_2(y,z) (1 + R_1(y-z')) +  R_1(z-z') (1+R_1(y-z'))   + \tilde{R}_2(z-y,z'-y) \Big) \\
 - & g_2(0,z) g_2(y,z) g_2(0,z')  (1+R_1(y-z')) + \tilde{R}_3(y,z,z') 
 \end{align*}
Let us note that $\frac{1}{g_2(z,z')} = 1 + R'_1(z-z')$ with $R'_1 \in L^q \cap L^\infty$ on the support of $\chi(z-z')$. Using the symmetric decomposition for  $g_4(0,y',z,z')$, together with decomposition \eqref{decompo_chi}, we end up with a decomposition of $A_\chi$ which is seen through a tedious but straightforward calculation to have the right structure.

{\footnotesize
\bibliographystyle{siam}

 \bibliography{biblio_Batchelor_Green}
 }
 \end{document}